\newcommand{\rrvert}{\vert}
\newcommand{\llvert}{\vert}
\newtheorem{theorem}{Theorem}[section]
\newtheorem{proposition}{Proposition}[section]
\newtheorem{lemma}{Lemma}[section]
\renewcommand{\epsilon}{\varepsilon}
\newcommand{\R}{\mathbb{R}}
\newcommand{\N}{\mathbb{N}}
\newcommand{\Z}{\mathbb{Z}}
\newcommand{\PR}{\mathbb{P}}
\newcommand{\ES}{\mathbb{E}}
\begin{document}
\begin{frontmatter}

\title{Biased random walk in positive random conductances on $\mathbb{Z}^{\lowercase{d}}$}
\runtitle{\hspace*{-5pt}Biased random walk in positive random conductances on $\mathbb{Z}^{\lowercase{d}}$}

\begin{aug}
\author[A]{\fnms{Alexander} \snm{Fribergh}\corref{}\ead[label=e1]{alexander.fribergh@math.univ-toulouse.fr}}
\runauthor{A. Fribergh}
\affiliation{CNRS and Universit\'e de Toulouse}
\address[A]{Institut de Math\'ematiques\\
CNRS UMR 5219\\
31062 Toulouse cedex 09\\
France\\
\printead{e1}} 
\end{aug}

\received{\smonth{8} \syear{2011}}
\revised{\smonth{1} \syear{2013}}

%
\begin{abstract}
We study the biased random walk in positive random conductances
on~$\Z^d$. This walk is transient in the direction of the bias. Our
main result is that the random walk is ballistic if, and only if, the
conductances have finite mean. Moreover, in the sub-ballistic regime we
find the polynomial order of the distance moved by the particle. This
extends results obtained by Shen [\textit{Ann. Appl. Probab.}
\textbf{12} (2002) 477--510], who proved positivity of the speed in the
uniformly elliptic setting.
\end{abstract}

%
\begin{keyword}[class=AMS]
\kwd[Primary ]{60K37}
\kwd[; secondary ]{60J45}
\end{keyword}
\begin{keyword}
\kwd{Random walk in random conductances}
\kwd{heavy-tailed random variables}
\end{keyword}

\end{frontmatter}

\section{Introduction}

One of the most fundamental questions in random walks in random media
is understanding the long-term behavior of the random walk. This topic
has been intensively studied, and we refer the reader to \cite
{Zeitouni} for a general survey of the field. An interesting feature of
random walks in random environments (RWRE) is that several models
exhibit anomalous behaviors. One of the main reasons for such behaviors
is trapping, a phenomenon observed by physicists long ago \cite{HB} and
which is a central topic in RWRE. The importance of trapping in several
physical models (including RWRE) motivated the introduction of the
Bouchaud trap model (BTM). This is an idealized model that received a
lot of mathematical attention. A~review of the main results can be
found in \cite{BC}, a survey which conjectures that the type of results
obtained in the BTM should extend to a wide variety of models,
including RWRE.

One very characteristic behavior associated to trapping is the
existence of a zero asymptotic speed for RWRE with directional
transience. In the last few years, several articles have analyzed such
models from a trapping perspective, such as \cite{ESZ} and \cite{ESZ1}
on $\Z$ and \cite{BFGH,BH} and \cite{H} on trees. The results
on the $d$-dimensional lattice (with $d\geq2$) are much more rare,
since RWRE on $\Z^d$ are harder to analyze. Among the most natural
examples of directionally transient RWRE in $\Z^d$ are biased random
walks in random conductances.
So far, mathematically, only two models of biased random walks in
$\Z^d$ have been studied from a trapping perspective: one is on a supercritical
percolation cluster and the other is in environments assumed to be uniformly
elliptic. Before further discussing trapping issues, we wish to mention that
biased random walks in random environment also raise many other
interesting questions, such as the Einstein relation which has led to many new
developments, see \cite{BHOZ,GMP} and \cite{LR}.


In the case of biased random walks on a percolation cluster in $\Z^d$,
it was shown in \cite{BGP} (for $d=2$) and in \cite{Sznitman} that the
walk is directionally transient and, more interestingly, there exists a
zero-speed regime. More recently in \cite{FH} a characterization of the
zero-speed regime has been achieved. Those results confirmed the
predictions of the physicists that trapping occurs in the model; see
\cite{Dhar} and \cite{DS}.

In the case of a biased random walk in random conductances which are
uniformly elliptic, it has been shown in \cite{Shen} that the walk is
directionally transient and has always positive speed and verifies an
annealed central limit theorem. These results are coherent with the
conjecture that, a directionally transient random walk in random
environment which is uniformly elliptic, should have positive speed;
see~\cite{Sznitman2}. Hence, trapping does not seem to appear under
uniform ellipticity conditions.

The results on those two models do not bring any understanding of the
behavior of the random walk in positive conductances that might be
arbitrarily close to zero. In such a model, we truly lose the uniform
elliptic assumption, as opposed to the biased random walk on the
percolation cluster, where the walk is still uniformly elliptic on the
graph where the walk is restricted.

Our purpose in this paper is to understand the ballistic-regime of a
biased random walk in positive i.i.d. conductances and how trapping
arises in such a model.

\section{Model}

We introduce $ {\mathbf P}[ \cdot]=P_*^{\otimes E(\Z^d)} $, where $P_*$ is
the law of a positive random variable $c_*\in(0,\infty)$. This measure
gives a random environment usually denoted $\omega$.

In order to define the random walk, we introduce a bias $\ell=\lambda
\vec\ell$ of strength $\lambda>0$ and a direction $\vec\ell$ which
is in the unit sphere with respect to the Euclidian metric of $\R^d$.
In an environment $\omega$, we consider the Markov chain of law
$P_x^{\omega}$ on $\Z^d$ with transition probabilities $p^{\omega
}(x,y)$ for $x,y\in\Z^d$ defined by:
\begin{longlist}[(2)]
\item[(1)] $X_0=x$, $P_x^{\omega}$-a.s.,
\item[(2)] $ {p^{\omega}(x,y) =\frac{c^{\omega}(x,y)}{\sum_{z \sim x}
c^{\omega}(x,z)}}$,
\end{longlist}
where $x\sim y$ means that $x$ and $y$ are adjacent in $\Z^d$, and also
we set
%
\begin{equation}
\label{defconduct} \mbox{for all $x\sim y \in\Z^d$}\qquad
c^{\omega}(x,y)=c_*^{\omega
}\bigl([x,y]\bigr)e^{(y+x)\cdot\ell}.
\end{equation}

This Markov chain is reversible with invariant measure given by
\[
\pi^{\omega}(x)=\sum_{y \sim x}
c^{\omega}(x,y).
\]

The random variable $c^{\omega}(x,y)$ is called the conductance between
$x$ and $y$ in the configuration $\omega$. This comes from the links
existing between reversible Markov chains and electrical networks. We
refer the reader to \cite{DoyleSnell} and \cite{LP} for a further
background on this relation, which we will use extensively. Moreover
for an edge $e=[x,y]\in E(\Z^d)$, we denote $c^{\omega}(e)= c^{\omega}(x,y)$.

Finally the annealed law of the biased random walk will be the
semi-direct product $\PR= {\mathbf P}[ \cdot] \times P_0^{\omega}[ \cdot]$.

In the case where $c_*\in( 1/K, K)$ for some $K<\infty$, the walk is
uniformly elliptic, and this model is the one previously studied
in \cite{Shen}.

\section{Results}
First, we prove that the walk is directionally transient.
%
\begin{proposition}\label{dirtrans}
We have
\[
\lim X_n\cdot\vec{\ell} = \infty,\qquad \PR\mbox{-a.s.}
\]
\end{proposition}

This proposition is a consequence of Proposition \ref{taufinite}.

Our main result is:
%
\begin{theorem}
\label{thetheorem}
For $d\geq2$, we have
\[
\lim\frac{X_n} n =v,\qquad \PR\mbox{-a.s.},
\]
where:
\begin{longlist}[(2)]
\item[(1)] if $E_*[c_*]<\infty$, then $v\cdot\vec{\ell} >0$;
\item[(2)] if $E_*[c_*]=\infty$, then $v=\vec0$.
\end{longlist}

Moreover, if $\lim\frac{\ln P_*[c_*>n]}{\ln n}=-\gamma$ with $\gamma
<1$, then
\[
\lim\frac{\ln X_n \cdot\vec{\ell}}{\ln n} =\gamma,\qquad \PR\mbox{-a.s.}
\]
\end{theorem}

This theorem follows from Propositions \ref{LLN1}, \ref{LB},
Lemma \ref{LB1} and Proposition~\ref{LB2}.

This result proves that trapping phenomena may occur in an elliptic
regime, that is, when all transition probabilities are positive.

\begin{figure}

\includegraphics{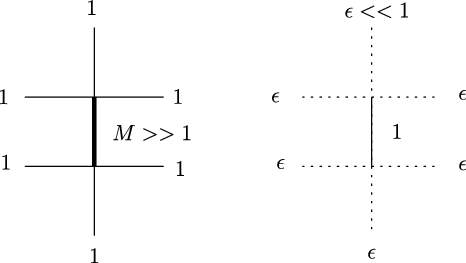}

\caption{The two main types of traps.}
\label{fig1}
\end{figure}

Let us rapidly discuss the different main ways the walk may be trapped
(see Figure \ref{fig1}):
\begin{longlist}[(2)]
\item[(1)] an edge with high conductance surrounded by normal conductances;
\item[(2)] a normal edge surrounded by very small conductances.
\end{longlist}

Let us discuss how the first type of traps function. Assume that we
have an edge $e$ of conductance $c_*(e)$ surrounded by edges of fixed
conductances, say 1. A simple computation shows that the walk will need
a time of the order of $c_*(e)$ to leave the endpoints of $e$. Hence,
if the expectation of $c_*$ is infinite, then the annealed exit time of
the $e$ is infinite. Heuristically, one edge is enough to trap the walk
strongly. This phenomenon is enough to explain the zero-speed regime.

At first glance it is surprising that, in Theorem \ref{thetheorem},
there is only a condition on the tail of $c_*$ at infinity. Indeed, if
the tail of $c_*$ at $0$ is sufficiently big, more precisely such that
$E[\min_{i=1,\ldots, 4d-2} 1/c_*^{(i)}]=\infty$ for $c_*^{(i)}$
i.i.d. chosen under $P_*$, then the second type of traps are such that
the annealed exit time of the central edge is infinite. This condition
does not appear in Theorem \ref{thetheorem} because the walk is
unlikely to reach such an edge. Indeed, it needs to cross an edge with
extremely low conductance to enter the trap. This type of trapping is
barely strong enough to create a zero-speed regime (see Remark~\ref
{tauinf}), nevertheless it forces us to be very careful in our
analysis of the model.

One may try to create traps similar to those encountered in the biased
random walk on the percolation cluster. In this model, if the bias is
high enough, a long dead-end in the direction of the drift can trap the
walk strongly enough to force zero-speed. In our context, we are not
allowed to use zero conductances, but we may use extremely low
conductances, forcing the walk to exit the dead-end at the same place
it entered. Nevertheless, this type of trap is very inefficient.
Indeed, most edges forming a dead-end have to verify $c_*(e)<\epsilon$
to be able to contain the walk for a long period, and this for any
fixed $\epsilon>0$. The probabilistic cost of creating such a trap is
way too high.

Hence, small conductances cannot create zero-speed, but high
conductances can. To conclude, we give an idealized version of the two
most important types of traps in this model,
\[
X_1=\operatorname{Geom}\bigl((1/c_*)\wedge1\bigr)
\]
or
\[
X_2=\cases{\operatorname{Geom}\bigl(c_*'\wedge1\bigr), &\quad with
probability $c_*'\wedge1$,
\cr
0, &\quad else,}
\]
where $c_*$ is chosen according to the law $P_*$, and $c_*'$ has the
law of
\[
\max_{i=1,\ldots, 4d-2} c_*^{(i)}
\]
where $c_*^{(i)}$ are i.i.d. chosen under the law $P_*$ and independent
of the geometric random variables. Intuitively, one should be able to
understand anything related to trapping with biased random walks in an
elliptic setting using those idealized traps.

Before moving on, let us say a word about the central limit theorem,
which is known to hold in the uniformly elliptic case (see \cite
{Shen}), at least in the annealed setting. Using the idealized model we
just described above, we are led to believe that if the two following
conditions hold:
\begin{longlist}[(2)]
\item[(1)] $P_*[1/c_*\geq x]\leq x^{-1/(4d-2)-\epsilon}$ for some
$\epsilon>0$,
\item[(2)] $P_*[c_*\geq x] \leq x^{-2-\epsilon}$ for some $\epsilon>0$,
\end{longlist}
then an annealed central theorem should hold. These conditions should
in some weak sense be necessary as indicated in Remark \ref{tcl}. It is
interesting to note that even though the tail of $1/c_*$ at $0$ does
not affect the law of large numbers, it is, however, important for the
central limit theorem. Although strongly related to the estimates made
in this article, the central limit theorem is not a directed
consequence of them, and due to the length of this paper, we choose not
to pursue this issue further.

Let us explain the organization of the paper. We begin by studying exit
probabilities of large boxes; the main point of Section \ref{sectBL}
is to prove Theorem \ref{BL} which is a property similar to Sznitman's
conditions $(T)$ and $(T)_{\gamma}$; see \cite{Sznitman2}. This
property is one of the key estimates for studying directionally
transient RWRE. It allows us to define regeneration times, similar to
the ones introduced in \cite{SZ}, and study them; this is done in
Section \ref{sectfirstregen}. The construction of regeneration times
in this model is complicated by the fact that we lack any type of
uniform ellipticity. This issue is explained in more details and dealt
with in Section \ref{sectopen}. The law of large numbers in the
positive speed regime is obtained in Section \ref{sectposspeed}. The
zero-speed regime is studied in Section \ref{sectzerospeed}. The next
section is devoted to notations which will be used throughout this paper.

\section{Notations}\label{sectnotation}

Let us denote by $(e_i)_{i=1,\ldots, d}$ an orthonormal basis of $\Z^d$
such that $e_1\cdot\vec\ell\geq e_2\cdot\vec\ell\geq\cdots\geq
e_d\cdot\vec\ell\geq0$; in particular we have $e_1\cdot\vec\ell
\geq1 /\sqrt d$. The set $\{\pm e_1,\ldots,\pm e_d\}$ will be denoted
by $\nu$. Moreover, we complete $f_1:=\vec{\ell}$ into an orthonormal
basis $(f_i)_{1\leq i \leq d}$ of $\R^d$.

We set
\[
\mathcal{H}^+(k)=\bigl\{x\in\Z^d; x\cdot\vec{\ell} > k\bigr\} \quad\mbox{and}\quad
\mathcal{H}^-(k)=\bigl\{x\in\Z^d; x\cdot\vec{\ell} \leq k\bigr
\}
\]
and
\[
\mathcal{H}^+_x=\mathcal{H}^+(x\cdot\vec{\ell}) \quad\mbox{and}\quad
\mathcal{H}^-_x=\mathcal{H}^-(x\cdot\vec{\ell}).
\]

For any graph $G$, let us introduce $d_{G}(x,y)$ the graph distance in
$G$ between $x$ and $y$. Define for $x\in G$ and $r>0$
\[
B_{G}(x,r)=\bigl\{y\in G; d_{G}(x,y)\leq r\bigr\}.
\]

Given a set $V$ of vertices of $\Z^d$, we denote by $\llvert
V\rrvert$ its
cardinality, by $E(V)=\{ [x,y] \in E(\Z^d); x,y\in V\}$ its edges and
\[
\partial V= \{x \notin V; y\in V\mbox{ and }x\sim y\}
\]
as well as
\[
\partial_E V= \bigl\{[x,y] \in E\bigl(\Z^d\bigr); x\in
V\mbox{ and }y \notin V \bigr\},
\]
its borders.

Given a set $E$ of edges of $\Z^d$, we denote $V(E)=\{x\in\Z^d;
x$ is an endpoint of $e\in E\}$ its vertices.

Denote for any $L,L' \geq1$
\[
B\bigl(L,L'\bigr)= \bigl\{z\in\Z^d; \llvert z \cdot
\vec{\ell}\rrvert\leq L\mbox{ and }\llvert z\cdot f_i\rrvert\leq
L' \mbox{ for $i \in[2,d]$} \bigr\}
\]
and
\[
\partial^+ B\bigl(L,L'\bigr)=\bigl\{z\in\partial B
\bigl(L,L'\bigr); z \cdot\vec{\ell} > L \bigr\}.
\]

We introduce the following notation. For any set of vertices $A$ of a
certain graph on which a random walk $X_n$ is defined, we denote
\[
T_A=\inf\{n\geq0; X_n\in A\},\qquad T_A^+=\inf
\{n\geq1; X_n\in A \}
\]
and
\[
T_A^{\mathrm{ex}}=\inf\{n\geq0; X_n\notin A\}.
\]
We will use a slight abuse of notation and write $x$ instead of $\{x\}$
when the set is a point $x$.

This allows us to define the hitting time of ``level'' $n$ by
\[
\Delta_n=T_{\mathcal{H}^+(n)}.
\]

Also, $\theta_n$ will denote the time shift by $n$ units of time.

Finally, we will use the notation $P[A,B]$ to designated $P[A\cap B]$,
when we are given a probability measure $P$ and two sets $A$ and $B$.
Similarly given a random variable $X$ and an event $A$, we may use the
notation $E[X,A]$ to designate $E[X{\mathbf1}{\{A\}}]$ when there is
no confusion possible.

In this paper constants are denoted by $c \in(0,\infty)$ or $C \in
(0,\infty)$ without emphasizing their dependence on $d$ and the law
$P_*$. Moreover the value of those constants may change from line to line.

\section{Exit probability of large boxes}
\label{sectBL}

Our first goal is to obtain estimates on the exit probabilities of
large boxes, which will allow us to prove directional transience and is
key to analyzing this model. In particular, we aim at showing:
%
\begin{theorem}
\label{BL}
For $\alpha>d+3$
\[
\PR[T_{\partial B(L,L^{\alpha})}\neq T_{ \partial^+ B(L,L^{\alpha})}]
\leq Ce^{-cL}.
\]
\end{theorem}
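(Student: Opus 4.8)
The plan is to control the walk's exit distribution from the box $B(L,L^\alpha)$ by an electrical-network argument combined with a union bound over the "bad" parts of the boundary. Write $B=B(L,L^\alpha)$ and decompose $\partial B = \partial^+ B \cup \partial^- B \cup \partial^{\mathrm{side}} B$, where $\partial^- B = \{z\in\partial B : z\cdot\vec\ell \le -L\}$ (the "back" face) and $\partial^{\mathrm{side}} B$ collects the lateral faces where $|z\cdot f_i|$ is maximal for some $i\in[2,d]$. The goal is to bound $\PR[T_{\partial B} = T_{\partial^- B}]$ and $\PR[T_{\partial B} = T_{\partial^{\mathrm{side}} B}]$ separately, each by $e^{-cL}$; since $\alpha>d+3$ the side faces are exponentially far (in the relevant sense) and the number of sites on them is only polynomial in $L$, so a union bound absorbs the polynomial loss.

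The main tool I would use is the fact that $c^\omega(x,y) = c_*^\omega([x,y]) e^{(x+y)\cdot\ell}$ carries a deterministic exponential factor in the direction of the bias. First I would handle the back face: starting from $0$, the probability of reaching $\mathcal H^-(-L)$ before $\mathcal H^+(L)$ can be estimated by a Dirichlet-form / effective-conductance comparison. The exponential weights $e^{(x+y)\cdot\ell}$ make the "downhill" edges have conductance smaller by a factor $e^{-2\lambda(e_1\cdot\vec\ell)}\le e^{-2\lambda/\sqrt d}$ than the corresponding "uphill" edges at the same transverse location; projecting the network onto the $\vec\ell$-axis (collapsing hyperplane slices) and using Rayleigh monotonicity, one compares with a biased birth–death chain on $\{-L,\dots,L\}$ whose probability of hitting $-L$ before $L$ from $0$ is $\le C e^{-cL}$. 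Crucially this comparison is uniform in $\omega$ because the $c_*$-factors, being positive, only help — more precisely, in the projected network each effective conductance between consecutive levels is a sum over a slice, and the ratio of the down-sum to the up-sum is controlled deterministically by the exponential weights regardless of the random $c_*$ values. This is the step I expect to be the main obstacle: making the slice-collapsing/Rayleigh argument genuinely independent of the (possibly wildly varying, non-uniformly-elliptic) $c_*$ requires care, and one likely needs to argue level-by-level, bounding the conditional probability of moving down a level before moving up by a constant $<1/2$ using only the structure of \eqref{def_conduct}, then iterating.

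For the side faces, I would argue that to exit through $\partial^{\mathrm{side}} B$ the walk must travel transverse distance $\sim L^\alpha$ while its $\vec\ell$-coordinate stays in $(-L,L)$. Again comparing with the projected one-dimensional biased walk, the number of steps before $\Delta_L = T_{\mathcal H^+(L)}$ occurs has exponential tails: $\PR[\Delta_L > t] \le C e^{-ct} + Ce^{-cL}$ roughly, so with overwhelming probability the walk makes at most $e^{cL}$ (indeed $O(L)$ in expectation, $O(L^2)$ with exponential tails — but certainly at most polynomially many, or at most $e^{o(L)}$) steps before exiting through $\partial^+$ or $\partial^-$; since $\alpha > d+3 > 2$, reaching transverse distance $L^\alpha$ within that many steps is impossible deterministically (a walk of $n$ steps moves transverse distance at most $n$), or has probability $\le e^{-cL^\alpha}\le e^{-cL}$. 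Combining: $\PR[T_{\partial B}\ne T_{\partial^+ B}] \le \PR[T_{\partial B}=T_{\partial^- B}] + \PR[T_{\partial B}=T_{\partial^{\mathrm{side}} B}] \le e^{-cL} + e^{-cL}$, which gives the claim after adjusting $c$. The polynomial factor $L^{\alpha d}$ from the number of boundary sites, and the condition $\alpha>d+3$, enter only in making sure these union bounds and the "$n$ steps move distance $\le n$" comparison close; I would track the exact exponent only at the end.
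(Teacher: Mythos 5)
Your argument has a genuine gap at exactly the step you yourself flagged: the claim that the backtracking estimate can be made uniform in $\omega$, i.e.\ that ``the ratio of the down-sum to the up-sum is controlled deterministically by the exponential weights regardless of the random $c_*$ values'', is false in this model. By~(\ref{def_conduct}), the ratio of the backward to the forward conductance at a vertex $x$ is $e^{-2\lambda e_1\cdot\vec{\ell}}\,c_*([x,x-e_1])/c_*([x,x+e_1])$, and since $c_*$ is only assumed positive (no uniform ellipticity), this ratio is unbounded: an environment in which the edges of the segment $\{0,-e_1,\ldots,-Le_1\}$ carry conductance $e^{10\lambda L}$ makes the \emph{quenched} probability of hitting $\mathcal{H}^-(-L)$ before $\mathcal{H}^+(L)$ close to $1$. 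Hence no level-by-level bound of the conditional ``down before up'' probability by a constant $<1/2$, and no slice-collapsing/Rayleigh comparison that is blind to the $c_*$'s, can exist; the statement is intrinsically annealed, and one must pay for, and control the geometry of, the atypical regions. That is precisely the content of the paper's proof: a percolation argument (Lemmas~\ref{BL_size_closed_box} and~\ref{BL_size_closed_box_2}, via domination of oriented percolation and a cutset count) shows the $K$-bad regions have exponentially small width, these regions are sealed off by passing to the walk induced on $\text{GOOD}_K$ (the graph $\omega_K$, Proposition~\ref{BL_prop_walk}), a spectral gap estimate is proved for that modified walk (Lemma~\ref{BL_Dirichlet}, using the directed open paths and the ``fake uniform ellipticity'' of Remark~\ref{fake_UE}), and only then does one conclude via Carne--Varopoulos and the transfer arguments of~\cite{FH}. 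Your proposal contains no substitute for this machinery.

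A second, independent gap is the side-face estimate: you invoke exponential tails for the exit time, $\PR[\Delta_L>t]\le Ce^{-ct}+Ce^{-cL}$, but such bounds fail here because of trapping. A single edge of conductance $c_*(e)$ adjacent to the origin detains the walk for a geometric time with mean of order $c_*(e)$, and $c_*$ has no exponential moments under the standing assumptions (its mean may even be infinite --- that is the whole zero-speed regime of Theorem~\ref{the_theorem}); since $P_*[c_*>t]$ may decay arbitrarily slowly, the annealed tail of $\Delta_L$ can be arbitrarily heavy and the ``at most $e^{o(L)}$ steps before exiting'' claim cannot be salvaged for the actual walk. In the paper, a fast-exit statement is available only for the modified walk in $\omega_K$ (via the spectral gap), and what is transferred back to the original walk is the exit \emph{location}, never the exit \emph{time}; any correct proof along your lines would have to perform a similar separation between the walk's behaviour on the good region and the time it wastes in traps.
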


After this section $\alpha$ will be fixed, greater than $d+3$.

We will adapt a strategy of proof used in \cite{FH}. For the most part,
the technical details and notations are simpler in our context. We will
go over the parts of the proof which can be simplified, but we will
eventually refer the reader to \cite{FH} for the conclusion of the
proof which is exactly similar in both cases. The notations have been
chosen so that the reader can follow the needed proofs in \cite{FH} to
the word.

First, let us describe the strategy we will follow.

The fundamental idea is to partition the space into a good part where
the walk is well-behaved and a bad part consisting of small connected
components where we have very little control over the random walk.

The strategy is two-fold:
\begin{longlist}[(2)]
\item[(1)] We may study the behavior of the random walk at times where
it is in the good part of the space, in which it can easily be
controlled. We will refer to this object as the modified walk. We show
that the modified walk behaves nicely, that is, verifies Theorem \ref
{BL}. This is essentially achieved using a combination of spectral gap
estimates and the Carne--Varopoulos formula \cite{Carne}.
\item[(2)] We need to show that information on the exit probabilities
for this modified random walk allows us to derive interesting
statements on the actual random walk. This is a natural thing to
expect, since the bad parts of the environment are small.
\end{longlist}

A more detailed discussion of the strategy of proof can be found at the
beginning of Section 7 in \cite{FH}.

\subsection{Bad areas}\label{subsecbad}

We say that an edge $e$ is $K$-normal if $c_*(e)\in[1/K,K]$, where $K$
will be taken to be very large in the sequel. If an edge is not
$K$-normal, we will say it is $K$-abnormal which occurs with
arbitrarily small probability $\epsilon(K):=P_*[c_*\notin[1/K,K]]$,
since $c_*\in(0,\infty)$.

In relation to this, we will say that a vertex $x$ is $K$-open if for
all $y\sim x$ the edge $[x,y]$ is $K$-normal. If a vertex is not
$K$-open, we will say it is $K$-closed. By taking $K$ large enough, the
probability that a vertex is $K$-open goes to 1. Finally a vertex $x\in
\Z^d$ is $K$-good if there exists an infinite directed $K$-open path
starting at $x$; that is, we have $\{x=x_0,x_1,x_2,x_3,\ldots\}$ with
$x_0=x$ such that for all $i\geq0$:
\begin{longlist}[(2)]
\item[(1)] we have $x_{2i+1} -x_{2i} = e_1$ and $x_{2i+2}-x_{2i+1} \in
\{e_1,\ldots,e_d\}$;
\item[(2)] $x_i$ is $K$-open.
\end{longlist}
If a vertex is not $K$-good, it is said to be $K$-bad. The key property
of a good point will be that there exists a open path $(x_i)_{i\geq0}$
such that $x_0\cdot\vec{\ell}<x_1\cdot\vec{\ell}\leq x_2 \cdot
\vec{\ell
}<x_3\cdot\vec{\ell}\leq\cdots\,$, and also this path verifies
$(x_i-x_0)\cdot\vec{\ell} \geq c(d) i$.

To ease the notation, which will be used repeatedly throughout the
article, we will not always mention the $K$-dependences.

The first of these results is stated in terms of the width of a subset
$A \subseteq\Z^d$, which we define to be
\[
W(A)=\max_{1 \leq i \leq d} \Bigl(\max_{y\in A} y\cdot
e_i - \min_{y\in
A} y\cdot e_i \Bigr).
\]

Let us denote $\mathrm{BAD}_K(x)$ the connected component of $K$-bad
vertices containing~$x$, in case $x$ is good then $\mathrm
{BAD}_K(x)=\varnothing$.
%
\begin{lemma}
\label{BLsizeclosedbox}
There exists $K_0$ such that, for any $K\geq K_0$ and for any $x\in\Z
^d$, we have that the cluster $\mathrm{BAD}_K(x)$ is finite ${\mathbf
P}_p[ \cdot]$-a.s. and
\[
{\mathbf P}_p\bigl[ W\bigl(\mathrm{BAD}_K(x)\bigr) \geq n
\bigr] \leq C\exp\bigl(-\xi_1(K)n\bigr),
\]
where $\xi_1(K)\to\infty$ as $K$ tends to infinity.
\end{lemma}

\begin{pf}
We call two vertices 2-connected if $\| u-v\|
_1=2$, so that we may define $\mathrm{BAD}_K^e(x)$ as the
2-connected component of bad vertices containing $x$. Any element of
$\mathrm{BAD}(x)$ is a neighbor of $\mathrm{BAD}_K^e(x)$ so that
$W(\mathrm{BAD}_K(x))\leq W(\mathrm{BAD}_K^e(x))+2$.

Consider now the site percolation model on the even lattice $\Z
^d_{\mathrm{even}}=\{v\in\Z^d, \| v\|t_1$
is even$\}$ where $y$ is
even-open if and only if, in the original model, the vertices $y$,
$y+e_1$ and $y+e_1+e_i$ ($i\leq d$) are open. An edge $[y,z]$ is
even-open if, and only if, $y$ and $z$ are even-open. This last model
is a 4-dependent oriented bond percolation model, which has a measure
that we denote by~$P_{p,\mathrm{orient}}$.

Fix $p'$ close to 1. For $K$ large enough, the probability that a
vertex is $K$-open can be made arbitrarily close to $1$. This means
that we can make the probability of an edge being even-open arbitrarily
close to $1$, so, by Theorem 0.0 in \cite{LSS}, the law $P_{p,
\mathrm{orient}}$ dominates an i.i.d. bond percolation with parameter $p'$.

Let us introduce the outer edge-boundary $\partial_E \operatorname{BAD}_K^e(x)$
of $\mathrm{BAD}_K^e(x)$ in the graph $\Z_{\mathrm{even}}^d$ with the
following notion of adjacency: $x$ and $y$ are adjacent if $x-y\in\{
\pm(e_i \pm e_j)$ with $i\neq j$ and $i,j\leq d\}$.

We describe how to do the proof for $d=2$. We will assign an arrow to
any edge $[y,z]\in\partial_E \operatorname{BAD}_K^e(x)$, and assuming $y\in
\mathrm{BAD}_K^e(x)$ and $z\notin\mathrm{BAD}_K^e(x)$, we set:
\begin{longlist}[(4)]
\item[(1)] $\swarrow$, if $y-x=e_1-e_2$;
\item[(2)] $\nwarrow$, if $y-x=e_1+e_2$;
\item[(3)] $\nearrow$, if $y-x=-e_1+e_2$;
\item[(4)] $\searrow$, if $y-x=-e_1-e_2$.
\end{longlist}

This boundary is represented dually in Figure \ref{fig2}. By an argument similar
to that of Durrett \cite{Durrett}, page 1026, we see that $n_{\nearrow}
+n_{\searrow} =n_{\swarrow}+n_{\nwarrow}$, where $n_{\nearrow}$, for
example, is the number of edges labeled $\nearrow$ in $\partial_E
\operatorname{BAD}_K^e(x)$.

\begin{figure}

\includegraphics{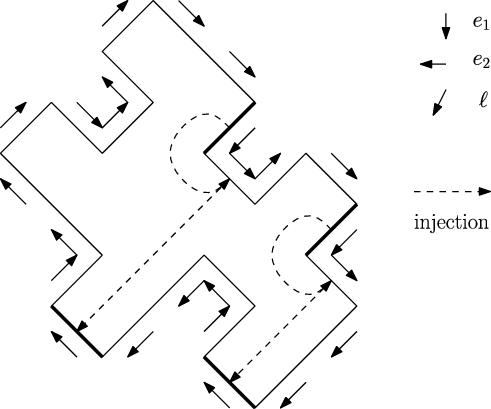}

\caption{The outer edge-boundary $\partial_E \operatorname{BAD}_K^e(x)$ of
$\mathrm{BAD}_K^e(x)$ represented dually on the even lattice when $d=2$.}
\label{fig2}
\end{figure}

\begin{longlist}[(2)]
\item[(1)] Any $\nwarrow$ edge of $\partial_E \operatorname{BAD}_K^e(x)$ has
one endpoint, say $y$, which is bad, and one, $y+e_1+e_2$, which is
good. This implies that $y$ is even-closed.
Now, we will argue that if $n_{\nwarrow} \geq n_{\swarrow}/2$, then at
least one sixth of the edges of $\partial_E \operatorname{BAD}_K^e(x)$ are
$\nwarrow$ edges, and hence even-closed. Indeed, since $n_{\nearrow}
+n_{\searrow} =n_{\swarrow}+n_{\nwarrow}$, we\vadjust{\goodbreak} know that $n_{\swarrow
}+n_{\nwarrow} =\llvert\partial_E \operatorname{BAD}_K^e(x)\rrvert
/2$ and also, using
our hypothesis, $n_{\swarrow}+n_{\nwarrow} \leq3 n_{\nwarrow}$. Those
two inequalities imply that $n_{\nwarrow}\geq\llvert\partial_E \operatorname{BAD}_K^e(x)\rrvert/6$. Hence, at least one sixth of the
edges of $\partial_E \operatorname{BAD}_K^e(x)$ are even-closed.
\item[(2)] Otherwise, let us assume that $n_{\swarrow}\geq
2n_{\nwarrow
}$. We may notice any $\swarrow$ edge followed (in the sense of the
arrows) by an $\searrow$ edge can be mapped in an injective manner to
an $\nwarrow$ edge. This injection is indicated in Figure~\ref{fig2} (by
considering the bold edges). This injection, with $n_{\swarrow}\geq
2n_{\nwarrow}$, means that at least half of the $\swarrow$ edges are
not followed by an $\searrow$ edge. So, using that $n_{\nearrow}
+n_{\searrow} =n_{\swarrow}+n_{\nwarrow}$, we see that at least one
sixth of the edges of $\partial_E \operatorname{BAD}_K^e(x)$ are $\swarrow$
edges that are not followed by an $\searrow$ edge. Consider such an
$\swarrow$ edge, and we can see that the endpoint $y$ of the $\swarrow$
edge which is inside $\mathrm{BAD}_K^e(x)$ verifies that $y+2e_1$ is not
in $\mathrm{BAD}_K^e(x)$. Hence for any such $\swarrow$, there is one
endpoint $y$ which is bad and such that $y+2e_1$ is good, and hence $y$
is even-closed. Once again at least one sixth of the edges of $\partial
_E \operatorname{BAD}_K^e(x)$ are even-closed.
\end{longlist}

This means that at least one sixth of the edges of $\partial_E \operatorname{BAD}_K^e(x)$ are even-closed. The outer boundary is a minimal cutset,
as described in \cite{Babson}. The number of such boundaries of size
$n$ is bounded (by Corollary 9 in \cite{Babson}) by $\exp(Cn)$. Hence,
if $p'$ is close enough to $1$, a counting argument allows us to obtain
the desired exponential tail for $W( \mathrm{BAD}_K^e(x))$ under
$P_{p'}$, and hence under $P_{p,\mathrm{orient}}$ (since the latter is
dominated by the former).

For general dimensions, we note that there exists $i_0\in[2,d]$ such
that a proportion at least $1/d$ of the edges of $\partial_E \operatorname{BAD}_K^e(x)$ are edges of the form $[y,y\pm e_{i_0}]$ and $[y,y\pm
e_{1}]$ for some $y\in\Z^d$. We may then apply the previous reasoning
in every plane $y+\Z e_1+\Z e_{i_0}$ containing edges of $\partial_E \operatorname{BAD}_K^e(x)$ to show that at least a proportion $1/6$ of those
edges are even-closed. Thus, at least a proportion $1/6d$ of the edges
of $\partial_E \operatorname{BAD}_K^e(x)$ verify the same property. By
repeating the same counting argument as in the previous paragraph we
can infer the lemma.
\end{pf}

Let us define $\mathrm{BAD}_K=\bigcup_{x\in\Z} \mathrm{BAD}_K(x)$ which
is a
union of finite sets. Also we set $\mathrm{GOOD}_K=\Z^d \setminus
\mathrm{BAD}_K$. We may notice that
%
\begin{equation}
\label{disjoint} \mbox{for any $x\in\mbox{BAD}_K$}\qquad\partial
\mathrm{BAD}_K(x) \subset\mathrm{GOOD}_K,
\end{equation}
since $\mathrm{BAD}_K(x)$ is a connected component of bad points.




In the sequel $K$ will always be large enough so that $\mathrm{BAD}_K(x)$
are finite for any $x\in\Z^d$.

\subsection{A graph transformation to seal off big traps}

Given a certain configuration $\omega$, we construct a graph $\omega
_{K}$ (with conductances) such that the random walk induced by
recording only the steps of the original random walk in $\omega$
outside of large traps has the same law as the random walk in $\omega_{K}$.

We denote $\omega_{K} $ the graph obtained from $\omega$ by the
following transformation. The vertices of $\omega_{K} $ are the
vertices of $\mathrm{GOOD}_K$, and the edges of $\omega_{K} $ are:
\begin{longlist}[(2)]
\item[(1)] $\{[x,y], x, y \in\mathrm{GOOD}_K$, with $x\notin
\partial\mathrm{BAD}_K$ or $y\notin\partial\mathrm{BAD}_K\}$ and
have conductance $c^{\omega_{K}}([x,y]):= c^{\omega}([x,y])$,
\item[(2)] $ \{[x,y], x,y \in\partial\mathrm{BAD}_K\}$ (including
loops) which have conductance
\begin{eqnarray*}
c^{\omega_{K}}\bigl([x,y]\bigr)&:=& \pi^{\omega}(x)P^{\omega}_x
\bigl[X_1\in\mathrm{BAD}_K\cup\partial
\mathrm{BAD}_K, T_y^+=T_{\partial\mathrm{BAD}_K}^+\bigr]
\\
&=& \pi^{\omega}(y)P^{\omega}_y\bigl[X_1
\in\mathrm{BAD}_K\cup\partial\mathrm{BAD}_K, T_x^+=T_{\partial\mathrm{BAD}_K}^+
\bigr],
\end{eqnarray*}
\end{longlist}
the last equality being a consequence of reversibility and ensures
symmetry for the conductances.

We call the walk induced by $X_n$ on $ \mathrm{GOOD}_K$, the walk $Y_n$
defined to be $Y_n=X_{\rho_n}$ where
\[
\rho_0=T_{ \mathrm{GOOD}_K} \quad\mbox{and}\quad \rho_{i+1}=T^+_{
\mathrm{GOOD}_K}
\circ\theta_{\rho_i},
\]
where we recall that $\circ\,\theta_i$ stands for the time shift by $i$
units of time. This means that $\rho_{i+1}$ is the first time
(strictly) after $\rho_i$ when the walk is in $\mathrm{GOOD}_K$. This
means that $\rho_i$ are the successive times when $X_n$ is in $\mathrm{GOOD}_K$.

From \cite{FH}, Proposition 7.2, we have the two following properties.
%
\begin{proposition}
\label{BLpropwalk}
The reversible walk defined by the conductances $\omega_{K}$ verifies
the two following properties:
\begin{longlist}[(2)]
\item[(1)] It is reversible with respect to $\pi^{\omega}(\cdot)$.
\item[(2)] If started at $x\in\omega_{K}$, it has the same law as the
walk induced by $X_n$ on $\mathrm{GOOD}_K$ started at $x$.
\end{longlist}
\end{proposition}

Furthermore, we have:

\begin{lemma}
\label{condinegalite}
For $x,y \in\mathrm{GOOD}_K$ which are nearest neighbors in $\Z^d$, we
have $c^{\omega_{K}}([x,y])\geq c^{\omega}([x,y])$.
\end{lemma}

Hence, we may notice the following:
%
\begin{remark}
\label{fakeUE}
We may notice that for any $x \in\mathrm{GOOD}_K$, we have
\[
\frac{c}Ke^{-2\lambda x\cdot\vec{\ell}}\leq\pi^{\omega_K}(x)\leq
CKe^{-2\lambda x\cdot\vec{\ell}}
\]
and for any $y\in\mathrm{GOOD}_K$ adjacent, in $\Z^d$, to $x$
\[
\frac{c}Ke^{-2\lambda x\cdot\vec{\ell}} \leq c^{\omega
_K}\bigl([x,y]\bigr)\leq
CKe^{-2\lambda x\cdot\vec{\ell}}.
\]
\end{remark}

\subsection{\texorpdfstring{Spectral gap estimate in $\omega_K$}{Spectral gap estimate in omega K}}
\label{spectralgap}

The following arguments are heavily inspired from \cite{Sznitman} and
use spectral gap estimates. After showing that the spectral gap in
$\omega_K$, we can deduce that the walk is likely to exit the box
quickly in $\omega_K$. Finally, we need to argue that exiting the box
quickly, we should exit it in the direction of the drift. This allows
us to obtain Theorem \ref{BL}, once we have argued that the exit
probabilities in $\omega$ and $\omega_K$ are strongly related. This
paper only contains the first step of this reasoning, the following
ones being treated in \cite{FH}.

For technical reasons, we introduce the notation
\[
\tilde{B}\bigl(L,L^{\alpha}\bigr)=\bigl\{x\in\Z^d, -L\leq x
\cdot\vec{\ell} \leq2 L \mbox{ and } \llvert x\cdot f_i\rrvert\leq
L^{\alpha} \mbox{ for $i\geq2$}\bigr\}.
\]

Let us introduce the principal Dirichlet eigenvalue of $I-P^{\omega
_K}$, in\break \mbox{$\tilde{B}(L,L^{\alpha})\cap\omega_K$}
%
\begin{equation}
\label{propdirichlet1} \Lambda_{\omega_K}\bigl(
\tilde{B}\bigl(L,L^{\alpha}\bigr)\bigr)= \cases{\inf\bigl\{
\mathcal{E}_{\mathrm{GOOD}_K}(f,f), f_{\mid(\tilde
{B}(L,L^{\alpha})\cap\omega_K)^c}=0,\vspace*{1pt}\cr
\qquad\hspace*{76.5pt} \| f
\|_{L^2(\pi
(\omega
_K))}=1\bigr\},
\vspace*{1pt}\cr
\hspace*{16.2pt}\qquad\mbox{when $\tilde{B}
\bigl(L,L^{\alpha}\bigr)\cap\omega_K \neq\varnothing$,}
\vspace*{1pt}\cr
\infty, \qquad\mbox{by convention when } \tilde{B}\bigl(L,L^{\alpha}\bigr)
\cap
\omega_K=\varnothing,}\hspace*{-35pt}
\end{equation}
where the Dirichlet form is defined for $f,g\in L^2(\pi^{\omega_K})$ by
\begin{eqnarray*}
\mathcal{E}_{\mathrm{GOOD}_K}(f,g)&=&\bigl(f,\bigl(I-P^{\omega_K}\bigr)g
\bigr)_{\pi
^{\omega
_K}}
\\
&=&\frac12 \sum_{x,y\ \mathrm{neighbors}\ \mathrm{in}\ \omega_K} \bigl(f(y)-f(x)\bigr)
\bigl(g(y)-g(x)\bigr) c^{\omega_K}\bigl([x,y]\bigr).
\end{eqnarray*}

We have:
%
\begin{lemma}
\label{BLDirichlet}
For $\omega$ such that $\tilde{B}(L,L^{\alpha})\cap\omega_K \neq
\varnothing$, we have
\[
\Lambda_{\omega_K}\bigl(\tilde{B}\bigl( L,L^{\alpha}\bigr)\bigr) \geq
c(K) L^{-(d+1)}.
\]
\end{lemma}

\begin{pf}
From any vertex $x\in\omega_K$, which is a good point, there exists a
directed open path $x=p_x(0), p_x(1) ,\ldots, p_{x}(l_x)$ in $\omega_K$
(which are neighbors in $\Z^d$) such that $p_x(i)\in\tilde
{B}(L,L^{\alpha})$ for $i<l_x$ and $p_{x}(l_x)\notin\tilde
{B}(L,L^{\alpha})$. This allows us to say that
%
\begin{equation}
\label{BLbacktrack} \max_{i\leq l_x}
\frac{ \pi_{\omega_K}(x) }{c_{\omega_K}([p_x(i+1),
p_x(i)])}\leq C \max_{i\leq l_x} \frac{ \pi_{\omega}(x) }{\pi
_{\omega
}(p_x(i))} \leq C,
\end{equation}
where we used Lemma \ref{condinegalite} and Remark \ref{fakeUE}.
Moreover $l_x\leq CL$.

We use a classical argument of Saloff-Coste \cite{Saloff-Coste}, and we
write for\break $\| f\|_{L^2(\pi
^{\omega_K})}=1$
\begin{eqnarray*}
1&=&\sum_x f^2(x)\pi_{\omega_K}(x)
= \sum_x \biggl[\sum_i
f\bigl(p_x(i+1)\bigr)-f\bigl(p_x(i)\bigr)
\biggr]^2\pi_{\omega_K}(x)
\\
&\leq&\sum_x l_x \biggl[\sum
_i \bigl(f\bigl(p_x(i+1)\bigr)-f
\bigl(p_x(i)\bigr)\bigr)^2 \biggr]\pi_{\omega_K}(x).
\end{eqnarray*}

Now by (\ref{BLbacktrack}), we obtain
\[
1\leq C \sum_{x,y\ \mathrm{neighbors}\ \mathrm{in}\ \omega_K} \bigl(f(z)-f(y)
\bigr)^2 c_{\omega
_K}\bigl([x,y]\bigr) \times\max
_{b\in E(\Z^d)} \sum_{x\in\omega_K\cap
\tilde
{B}(L,L^{\alpha}),b\in p_x} l_x,
\]
where $b\in p_x$ means that $b=[p_x(i),p_x(i+1)]$ for some $i$. Using that:
\begin{longlist}[(2)]
\item[(1)] $l_x\leq CL $ for any $x\in\omega_K$,
\item[(2)] $b=[x,y]\in\omega_K$ can only be crossed by paths
``$p_z$'' if $b\in E(\Z^d)$ and $z\in B_{\Z^d}(x,CL)$,
\end{longlist}
we have
\[
\max_b \sum_{x\in\tilde{B}(n,n^{\alpha}),b\in p_x}
l_x \leq CL^{d+1}
\]
and
\[
1\leq C L^{d+1} \sum_{x,y\ \mathrm{neighbors}\ \mathrm{in}\ \omega_K} \bigl(f(z)-f(y)
\bigr)^2 c_{\omega_K}\bigl([x,y]\bigr).
\]

Since this is true for every $f$ such that $f_{\mid(\tilde
{B}(L,L^{\alpha})\cap\omega_K)^c}=0$ and\break $\|
f\|_{L^2(\pi
(\omega
_K))}=1$, we can use (\ref{propdirichlet1}) to see
\[
\Lambda_{\omega_K}\bigl(\tilde{B}\bigl(n,n^{\alpha}\bigr)\bigr) \geq
c L^{-(d+1)}.
\]
\upqed\end{pf}

We explained how to obtain Theorem \ref{BL} at the beginning of
Section \ref{spectralgap}. The proof of Theorem \ref{BL} is almost
completely similar to the end of the proof of Theorem 1.4 in \cite{FH}.
The reader may read Sections 7.4, 7.5 and 7.6 of \cite{FH} for the
complete details.

To ease this task, the notations have been chosen so that only two
minor changes have to be made: in our case $K_{\infty}=\Z^d$ and
$\mathcal{I}=\Omega$.

The proof in \cite{FH} uses some reference to previous results, and for
the reader's convenience we specify the correspondence. The following
results in \cite{FH}: Lemma~7.5, Proposition 7.2, Lemma 7.9 and the
second part of Lemma 7.6 correspond, respectively, to Lemma \ref
{BLsizeclosedbox}, Proposition \ref{BLpropwalk}, Lemma \ref
{BLDirichlet} and~(\ref{disjoint}).

A final remark is that any inequality needed on $\pi^{\omega_K}$ can be
found in Remark~\ref{fakeUE}.

\section{Construction of $K$-open ladder points}
\label{sectopen}

A classical tool for analyzing directional transient RWRE is to use a
regeneration structure \cite{SZ}. We call ladder-point a new maximum of
the random walk in the direction $\vec{\ell}$. The standard way of
constructing regeneration times is to consider successive ladder points
and argue that there is a positive probability of never backtracking
again. Such a ladder point creates a separation between the past and
the future of the random walk leading to interesting independence
properties. We call this point a regeneration time.

A major issue in our case is that we do not have any type of uniform
ellipticity. Ladder points are conditioned parts of the environment
and, at least intuitively, the edge that led us to a ladder point
should have uncharacteristically high conductance. Those high
conductances (without uniform ellipticity) may strongly hinder the walk
from never backtracking and creating a regeneration time.
In order to adapt the classical construction we need, in some sense, to
show that the environment seen from the particle at a ladder-point is
relatively normal. To address this problem, we will prove that we
encounter open ladder-points and find tail estimates on the location of
the first open ladder-point.

We define the following random variable:
\begin{eqnarray*}
\mathcal{M}^{(K)}&=&\inf\{i\geq0, X_i \mbox{ is $K$-open
and for }j< i-2, X_j\cdot\vec{\ell} < X_{i-2} \cdot\vec{
\ell}
\\
&&\hspace*{109pt} \mbox{ and }X_i=X_{i-1}+e_1=X_{i-2}+2e_1
\}\\
&\leq&\infty.
\end{eqnarray*}
This means at that point we have reached a new maximum of the
trajectory (in the direction $\vec{\ell}$), made two steps in the
direction $e_1$ and reaching and open site. This definition is just
slightly different than the first open ladder point; it is only for
technical reasons that we consider this definition.

Our goal for this section is to obtain properties on this random
variable, namely that it is finite and has arbitrarily high polynomial moments.

The dependence on $K$ will be dropped outside of major statements and
definitions.

\subsection{Preparatory lemmas}

We need three preparatory lemmas before turning to the study of
$\mathcal{M}^{(K)}$. For this, we introduce the inner positive boundary
of $ B(n,n^{\alpha})$
\[
\partial^+_i B\bigl(n,n^{\alpha}\bigr)=\bigl\{x\in B
\bigl(n,n^{\alpha}\bigr)\mbox{, where }x\sim y \mbox{ with } y\in
\partial^+
B\bigl(n,n^{\alpha}\bigr)\bigr\}
\]
and the event
\[
A(n)=\{T_{\partial B(n,n^{\alpha})}\geq T_{ \partial^+_i
B(n,n^{\alpha
})}\}.
\]

It follows from Theorem \ref{BL} that:
%
\begin{lemma}
\label{an}
We have
\[
\PR\bigl[A(n)^c\bigr] \leq Ce^{-cn}.
\]
\end{lemma}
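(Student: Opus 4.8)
The goal is to deduce the bound $\PR[A(n)^c]\leq Ce^{-cn}$ from Theorem~\ref{BL}, so the plan is simply to relate the event $A(n)^c$ to the event controlled by Theorem~\ref{BL}, namely $\{T_{\partial B(L,L^\alpha)}\neq T_{\partial^+ B(L,L^\alpha)}\}$. First I would recall the definitions: $A(n)=\{T_{\partial B(n,n^\alpha)}\geq T_{\partial^+_i B(n,n^\alpha)}\}$, where $\partial^+_i B(n,n^\alpha)$ is the set of vertices of $B(n,n^\alpha)$ adjacent to $\partial^+ B(n,n^\alpha)$. So $A(n)^c$ is the event that the walk reaches $\partial B(n,n^\alpha)$ strictly before reaching the inner positive boundary.

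\textbf{Key step: geometric comparison of the two exit events.} The main observation is that on $A(n)^c$, the walk exits the box $B(n,n^\alpha)$ through a part of $\partial B(n,n^\alpha)$ which is \emph{not} the positive face reached via $\partial^+_i$; more precisely, if the walk were to exit $B(n,n^\alpha)$ through $\partial^+ B(n,n^\alpha)$ at some vertex $z$ with $z\cdot\vec\ell\geq n$, then the step just before would be at a vertex of $B(n,n^\alpha)$ adjacent to $z\in\partial^+ B(n,n^\alpha)$, hence at a vertex of $\partial^+_i B(n,n^\alpha)$, so $T_{\partial^+_i B(n,n^\alpha)}<T_{\partial B(n,n^\alpha)}$ would hold, contradicting $A(n)^c$. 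Therefore
\[
A(n)^c\subseteq\{T_{\partial B(n,n^\alpha)}=T_{\partial B(n,n^\alpha)\setminus\partial^+ B(n,n^\alpha)}\}\subseteq\{T_{\partial B(n,n^\alpha)}\neq T_{\partial^+ B(n,n^\alpha)}\}.
\]
(One should be slightly careful about whether the walk can reach $\partial^+ B(n,n^\alpha)$ itself rather than pass straight through a vertex of $\partial^+_i$; since $\partial^+ B(n,n^\alpha)\subseteq\partial B(n,n^\alpha)$, hitting it already means $T_{\partial B}=T_{\partial^+ B}$ is ruled out on $A(n)^c$ only if the walk did not previously visit $\partial^+_i$ — but any neighbour of a vertex in $\partial^+ B(n,n^\alpha)$ lying in $B(n,n^\alpha)$ is in $\partial^+_i$, so the argument still closes. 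This bookkeeping about boundary-vs-inner-boundary is the only place requiring care.) Applying Theorem~\ref{BL} with $L=n$ (valid since $\alpha>d+3$ is fixed) gives $\PR[A(n)^c]\leq\PR[T_{\partial B(n,n^\alpha)}\neq T_{\partial^+ B(n,n^\alpha)}]\leq e^{-cn}$, which is the claim (absorbing constants).

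\textbf{Main obstacle.} There is no real analytic difficulty here; the entire content is the set-theoretic inclusion above, and the only subtlety is handling the distinction between the box's geometric positive boundary $\partial^+ B$ and its inner version $\partial^+_i B$, together with the fact that a nearest-neighbour walk on $\Z^d$ cannot jump over the boundary, so it must pass through $\partial^+_i B$ immediately before exiting through $\partial^+ B$. Once that inclusion is established, the lemma is immediate from Theorem~\ref{BL}.
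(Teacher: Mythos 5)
Your proof is correct and follows exactly the paper's (implicit) argument: the paper states Lemma~\ref{an} as an immediate consequence of Theorem~\ref{BL}, and the content is precisely your set-theoretic inclusion $A(n)^c\subseteq\{T_{\partial B(n,n^{\alpha})}\neq T_{\partial^+ B(n,n^{\alpha})}\}$, justified by the fact that a nearest-neighbour walk exiting through $\partial^+ B(n,n^{\alpha})$ must occupy a vertex of $\partial^+_i B(n,n^{\alpha})$ at the preceding step. Nothing further is needed.
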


We say that a vertex $x\in B(n,n^{\alpha})$ is $K$-$n$-closed if there
exists a nearest neighbor $y\notin\mathcal{H}^+(n)$ of $x$ such that
$c^*([x,y])\notin[1/K,K]$.

Let us denote $\overline{K}_x(n)$ the $K$-$n$-closed connected
component of $x$. This allows us to introduce the event
%
\begin{equation}
\label{defcompo} B(n)=\bigl\{\mbox{for all $x\in
\partial^+_i B\bigl(n,n^{\alpha}\bigr)$, we have }\bigl\llvert
\overline{K}_x(n)\bigr\rrvert\leq\ln n\bigr\}.
\end{equation}

It is convenient to set $\overline{K}_x(n)=\{x\}$ when $\overline
{K}_x(n)$ is empty.

\begin{lemma}
\label{tailb}
For any $M<\infty$, we can find $K_0$ such that for any $K\geq K_0$
\[
{\mathbf P}\bigl[B(n)^c\bigr]\leq Cn^{-M}.
\]
\end{lemma}
\begin{pf}
Obviously, for any $x\in\partial^+ B(n,n^{\alpha})$
\[
\overline{K}_x(n)\subset\mathrm{CLOSED}_K(x),
\]
where $\mathrm{CLOSED}_K(x)$ is the connected component of $K$-closed
point containing $x$.

Using Lemma 5.1 in \cite{Kesten}, we may notice that there are at most
an exponential number of lattice animals. Hence, for any $x\in\partial
^+ B(n,n^{\alpha})$
\[
{\mathbf P}\bigl[\bigl\llvert\mathrm{CLOSED}_K(x)\bigr\rrvert\geq\ln n
\bigr] \leq\sum_{k\geq\ln n}C \bigl(C_1
\epsilon(K)\bigr)^{k}\leq Cn^{-\xi_2(K)},
\]
where $\xi_2(K)$ tends to infinity $K$ goes to infinity. The right-hand
side can be made lower than $n^{-M}$ for any $M$ by choosing $K$ large
enough. The result follows from a union bound.
\end{pf}

Next, we show a result which, in particular, implies that, if we are on
the ``positive boundary'' (i.e., in the direction of the bias
and where the largest conductances are) of a finite connected subset of
$\Z^d$ surrounded by normal edges, then we have some lower bound on the
probability to exit that set through this positive side.

\begin{lemma}
\label{exitnet}
Take $G\neq\varnothing$ to be a finite connected subset of $\Z^d$.
Assume that each edge $e$ of $\Z^d$ is assigned a positive conductance
$c(e)$ and that there exist $c_1>0$, $x\in\partial G$ and $y\in G$
such that $x\sim y$ and $c([x,y])\geq c_1 c(e)$ for any $e\in\partial
_E G$. We have
\[
P_y[T_x\leq T_{\partial G}] \geq\frac{c_1}{4d}
\llvert G\rrvert^{-1},
\]
where $P_y$ is the law of the random walk in $\Z^d$ started at $y$
arising from the conductances $(c(e))_{e\in E(\Z^d)}$.
\end{lemma}

\begin{pf} We will be using comparisons to electrical networks, and we
refer the reader to Chapter 2 of \cite{LP} for further background on
this topic.

Let us first notice that a walk started at $y\in G$ will reach
$\partial G$ before $\Z^d\setminus(G\cup\partial G)$, so this lemma
is actually a result on a finite graph $G\cup\partial G$.

To simplify the proof, we will consider the graph $\tilde{G}$ where all
edges emanating from $x$ that are not $[x,y]$ will be assigned
conductance $0$, which corresponds to reflecting the walk on those
edges. It is plain to see that
%
\begin{equation}
\label{hhh} P_y[T_x\leq T_{\partial G}] \geq
P_y^{\tilde G \cup\partial\tilde
G}[T_x\leq T_{\partial\tilde{G}}],
\end{equation}
where $P_y^{\tilde G \cup\partial\tilde G}$ is the law of the random
walk started at $y$ in the conductances of the graph $\tilde G \cup
\partial\tilde G$.

Hence, it is enough to prove our statement in the finite graph $\tilde
G \cup\partial\tilde G$. We may see that
%
\begin{equation}
\label{dammit} P_y^{\tilde G \cup\partial\tilde G}[T_x\leq
T_{\partial\tilde{G}}] =u(y),
\end{equation}
where $u(\cdot)$ is the voltage function verifying $u(x)=1$ and
$u(z)=0$ for $z\in\partial\tilde{G}\setminus\{x\}$. Let us denote
$i(\cdot)$ the associate intensity. Since $y$ is the only vertex
adjacent to $x$ in $\tilde G \cup\partial\tilde G$, we know that the
current flowing into the circuit at $x$ passes through the edges
$[x,y]$, so
\[
\frac1 {R^{\tilde G \cup\partial\tilde G}(x,\partial\tilde{G}
\setminus\{x\}) }=i
\bigl([x,y]\bigr),
\]
where $R^{\tilde G \cup\partial\tilde G}(x,\partial\tilde{G}
\setminus\{x\}) $ is the effective conductance between $x$ and
$\partial\tilde{G} \setminus\{x\}$ in $\tilde G \cup\partial\tilde
G$. By Ohm's law, we may deduce that
\[
u(x)-u(y)=r^{\tilde G \cup\partial\tilde G}\bigl([x,y]\bigr) i\bigl
([x,y]\bigr)=
\frac
{r^{\tilde G \cup\partial\tilde G}([x,y]) }{R^{\tilde G \cup\partial
\tilde G}(x,\partial\tilde{G} \setminus\{x\}) }.
\]

Now, since $x$ is the only vertex adjacent to $y$, we can see by an
electrical network reduction of resistances in series that $R^{\tilde G
\cup\partial\tilde G}(x,\partial\tilde{G} \setminus\{x\}) =
r^{\tilde G \cup\partial\tilde G}([x,y]) +R^{\tilde G \cup\partial
\tilde G \setminus\{x\}}(y,\partial\tilde{G} \setminus\{x\})$. This
means that
\begin{eqnarray*}
u(y) &=& \frac{R^{\tilde G \cup\partial\tilde G \setminus\{x\}
}(y,\partial\tilde{G} \setminus\{x\})}{R^{\tilde G \cup\partial
\tilde G \setminus\{x\}}(y,\partial\tilde{G} \setminus\{x\})
+r^{\tilde G \cup\partial\tilde G} ([x,y])}\\
&=& \biggl(1 + \frac
{r^{\tilde G \cup\partial\tilde G}([x,y])} {R^{\tilde G \cup\partial
\tilde G \setminus\{x\}}(y,\partial\tilde{G} \setminus\{x\})} \biggr)^{-1}.
\end{eqnarray*}

We recall that Rayleigh's monotonocity principle (see \cite{LP}) states
that increasing any value of the conductance of an edge (in particular
merging two vertices) increases any effective conductances. We consider
the graph $\tilde G \cup\partial\tilde G \setminus\{x\}$ and
collapse all vertices $\tilde{G} \cup\partial\tilde{G} \setminus\{
x,y\}$ into one vertex $\delta$. This increases all effective
conductances. In this new graph, $y$ is connected to $\delta$ by at
most $\llvert\tilde{G} \cup\partial\tilde{G} \rrvert$
edges of conductances
at least $(1/c_1) c^{\tilde G \cup\partial\tilde G}([x,y]) $, by our
assumptions on the graph. By network reduction of conductances in
parallel, this means
\[
R^{\tilde G \cup\partial\tilde G\setminus\{x\}}\bigl(y,\partial\tilde
{G} \setminus\{x\}\bigr) \geq
\frac{c_1} {\llvert\tilde{G} \cup
\partial\tilde{G} \rrvert} r^{\tilde G \cup\partial\tilde G}\bigl
([x,y]\bigr).
\]

The two last equations imply, with (\ref{dammit}), that
\[
P_y^{\tilde G \cup\partial\tilde G}[T_x\leq T_{\partial\tilde{G}}] \geq
\frac{c_1}{4d} \llvert G\rrvert^{-1},
\]
and with (\ref{hhh}) this concludes the proof.
\end{pf}

\subsection{Successive attempts to find an open ladder point}

Let us denote $\mathcal{B}_n:=B(n, n^{\alpha} )$. We will show that an
open ladder point can occur shortly after we exit a box $\mathcal{B}_{n}$.

Let us denote for $k\leq n$ the events
%
\begin{equation}
\label{defR} R^{(K)}(nk)=\bigl\{\mathcal{M}^{(K)}
> T_{\partial\mathcal{B}_{nk}}+2\bigr\}.
\end{equation}

We have:
%
\begin{lemma}
\label{recursion}
For any $\epsilon_1>0$ and $M<\infty$, we can find $K_0=K_0(\epsilon
_1,M)$ large enough such that the following holds: for any $K\geq K_0$
and any $k\in[2,n]$,
\[
\PR\bigl[R^{(K)}(kn)\bigr] \leq\bigl(1-cn^{-\epsilon_1}\bigr)\PR
\bigl[R^{(K)}\bigl((k-1)n\bigr)\bigr]+Cn^{-M},
\]
where the constants depend on $K$.
\end{lemma}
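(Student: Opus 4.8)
The plan is to prove the recursion by conditioning on the trajectory of the walk up to the first exit of the box $\mathcal{B}_{(k-1)n}$, and then showing that, from wherever the walk exits this box, there is a probability of order at least $n^{-\epsilon_1}$ of producing an open ladder point before the walk gets the chance to exit $\mathcal{B}_{kn}$. More precisely, on the event $R^{(K)}((k-1)n)$, at time $T_{\partial \mathcal{B}_{(k-1)n}}+2$ the walk sits near $\partial^+\mathcal{B}_{(k-1)n}$ (on the good event $A((k-1)n)$ of Lemma~\ref{an}, it has exited through the front and is at some $x_0\in\mathcal{H}^+((k-1)n)$ a bounded distance past the face); by the Markov property we may restart the walk there and ask it to reach an open ladder point within the annulus $\mathcal{B}_{kn}\setminus\mathcal{B}_{(k-1)n}$, i.e. before time $T_{\partial\mathcal{B}_{kn}}$. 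Writing $p(x_0,\omega)$ for this conditional probability, one gets
\[
\PR[R^{(K)}(kn)] \le \ES\big[(1-p(X_{T_{\partial\mathcal{B}_{(k-1)n}}+2},\omega))\,\1{R^{(K)}((k-1)n)}\1{A((k-1)n)}\big] + \PR[A((k-1)n)^c],
\]
and Lemma~\ref{an} handles the last term (it is even smaller than the required $Cn^{-M}$), so the task reduces to showing $p(x_0,\omega)\ge cn^{-\epsilon_1}$ with probability close to $1$ under $\mathbf{P}$.

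Next I would construct the favorable scenario explicitly. Since $x_0$ is on (or just past) the front face of a large good box, with high $\mathbf{P}$-probability $x_0$ is a good point, or at least a short controlled path leads from $x_0$ to a super-good point $z$ whose forward $K$-open directed path we can follow; here is where Lemma~\ref{tail_b}, controlling the size of the $K$-$n$-closed components near $\partial^+_i\mathcal{B}$, and Lemma~\ref{BL_size_closed_box_2} on weakly-bad components enter, ensuring that with probability $\ge 1-Cn^{-M}$ every relevant bad cluster has size at most $\ln n$. On that event we want the walk to (a) escape from the (small) bad cluster it might be in and onto the good skeleton --- here Lemma~\ref{exit_net} gives a lower bound of order $|G|^{-1}\ge (\ln n)^{-d}$ for making a prescribed good step out of a cluster $G$ of width $\le \ln n$, using that the entering edge has comparable conductance --- and then (b) walk straight along a directed $K$-open path for $\Theta(\ln n)$ steps making two consecutive $+e_1$ moves at a record, which, because all edges along the way are $K$-normal, costs a probability bounded below by $(c/K)^{C\ln n}=n^{-C\ln(K/c)/\ln(\cdot)}$, i.e. polynomially small in $n$ with an exponent that we can make $\le\epsilon_1$. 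Combining, $p(x_0,\omega)\ge c (\ln n)^{-d}\, n^{-\epsilon_1/2}\ge c n^{-\epsilon_1}$ for $n$ large, which is exactly the bound needed; the length of the annulus $\mathcal{B}_{kn}\setminus\mathcal{B}_{(k-1)n}$ being $\Theta(n)\gg \ln n$ means there is room for this excursion without leaving $\mathcal{B}_{kn}$.

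The one subtlety to be careful about is the definition of $\mathcal{M}^{(K)}$: it requires not just that $X_i=X_{i-1}+e_1=X_{i-2}+2e_1$ with $X_i$ $K$-open, but also that $X_j\cdot\vec\ell<X_{i-2}\cdot\vec\ell$ for all $j<i-2$, i.e. the point must be a genuine record of the \emph{entire} past trajectory, not just of the excursion since $T_{\partial\mathcal{B}_{(k-1)n}}$. So I need the record constructed in the annulus to beat the running maximum of the whole walk. This is handled by insisting that the favorable scenario pushes the walk strictly beyond $\mathcal{H}^+((k-1)n)$ before attempting the double-$e_1$ record: on the event $A((k-1)n)$ the walk's trajectory up to $T_{\partial\mathcal{B}_{(k-1)n}}$ stayed in $\mathcal{B}_{(k-1)n}\subset\mathcal{H}^-((k-1)n)$ and $x_0\cdot\vec\ell\ge (k-1)n$ roughly, so any two consecutive $+e_1$ steps made from a point already a few units into $\mathcal{H}^+((k-1)n)$ automatically sit above the previous maximum. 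I would therefore prepend to step (b) a short deterministic drift of a bounded number of $+e_1$ steps (again at polynomial-in-$n$ cost, since the edges traversed are $K$-normal on the good event) to clear the old maximum, and then perform the record move.

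The main obstacle I expect is step (a)--(b) bookkeeping: one must simultaneously keep the walk inside the good/controlled region (so that transition probabilities are bounded below by $c/K$ and Lemma~\ref{exit_net} applies with the right cluster), ensure the prescribed path of length $\Theta(\ln n)$ actually exists $\mathbf{P}$-a.s.\ on the high-probability event (this is the directed-open-path part of the definition of a good point, with the alternation condition on $\pm e_i$ steps after an $e_1$ step), and verify that the whole excursion --- escape from a cluster of size $\le \ln n$, clearing the maximum, then $\Theta(\ln n)$ forced steps --- stays within $\mathcal{B}_{kn}$, which it does since its horizontal displacement is $O(\ln n)=o(n)$. Assembling these pieces into the single bound $p(x_0,\omega)\ge cn^{-\epsilon_1}$ and then integrating against $\mathbf{P}$, splitting off the $\le Cn^{-M}$ bad-environment and $\le Ce^{-cn}$ escape-failure events, yields the claimed recursion.
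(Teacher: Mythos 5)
Your recursive skeleton (restart at the exit of $\mathcal{B}_{(k-1)n}$ and lower-bound the chance of manufacturing an open ladder point before leaving $\mathcal{B}_{kn}$) has the right shape, but the quantitative heart of the argument fails. You price the forced excursion of length $\Theta(\ln n)$ along $K$-normal edges at $(c/K)^{C\ln n}=n^{-C\ln(K/c)}$ and assert that the exponent ``can be made $\le\epsilon_1$''. It cannot: $C\ln(K/c)$ \emph{increases} with $K$, and $K$ must be taken large (it is precisely the parameter you crank up to get the $Cn^{-M}$ error terms from Lemma~\ref{tail_b} and to keep bad clusters small), so for $K\geq K_0(\epsilon_1,M)$ this exponent is a fixed constant bounded away from $0$, typically larger than $1$. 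Your recursion then reads $\PR[R^{(K)}(kn)]\leq (1-cn^{-C(K)})\PR[R^{(K)}((k-1)n)]+Cn^{-M}$ with an uncontrolled $C(K)$, which is not the statement and is useless downstream (the iteration in Lemma~\ref{tail_M} needs the exponent strictly below $1$, and the lemma claims it can be \emph{any} $\epsilon_1>0$). If, to avoid this cost, you shorten the forced excursion to $O(1)$ steps, then the environmental configuration it requires (an open record point with two open successors exactly where the walk happens to sit) only occurs with probability bounded away from $1$, not $1-Cn^{-M}$, so your step ``$p(x_0,\omega)\geq cn^{-\epsilon_1}$ with $\mathbf{P}$-probability close to $1$'' collapses: you are caught in a trade-off (a search window of size $\sim\ln n$ to reach failure probability $n^{-M}$, versus a window of size $O(1)$ to keep the quenched cost at $n^{-\epsilon_1}$) that the proposal does not resolve. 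A second, related omission is the conditioning: the environment ahead of $X_{T_{\partial\mathcal{B}_{(k-1)n}}+2}$ is not independent of $R^{(K)}((k-1)n)$, and you never explain how a favourable configuration of merely constant (or $n^{-\epsilon_1}$) probability can be inserted \emph{multiplicatively} against $\PR[R^{(K)}((k-1)n)]$; your only escape route is an additive union bound, which is exactly what forces you into the bad trade-off above.

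The paper's proof avoids both problems by not attempting the ladder point where the walk currently is, but at the inner front boundary of the \emph{larger} box $\mathcal{B}_{kn}$, which the walk reaches anyway except on an event of probability $e^{-cn}$ (Theorem~\ref{BL}, Lemma~\ref{an}). There the only obstacles are: the $K$-$kn$-closed cluster $\mathcal{K}(kn)$ attached to the hitting point, crossed at price $c/\ln n$ by Lemma~\ref{exit_net} (the comparability constant is uniform precisely because the target vertex lies beyond level $kn$ while the cluster lies behind it, a point your ``escape the bad cluster'' step would also have to arrange); two forced steps at constant price (event $E(kn)$); and the single genuinely polynomial requirement $C(kn)$ that the at most $C\ln n$ forward boundary vertices of $\mathcal{K}(kn)$ be open, which by Harris' inequality has probability at least $(1-\epsilon(K))^{C\ln n}\geq n^{-\epsilon_1}$ once $K$ is large --- note this exponent \emph{decreases} as $K$ grows, the opposite of your forced-path cost. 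Crucially, that last event involves only conductances of edges meeting $\mathcal{H}^+(kn)$, which are independent of $\{R((k-1)n),A(kn),B(kn),\mathcal{K}(kn)=F\}$, and it is this measurability decomposition that lets the factor $n^{-\epsilon_1}$ multiply $\PR[R((k-1)n)]$ instead of having to be pushed into an additive $n^{-M}$ error. Without an ingredient of this kind, your proposal does not yield the stated bound.
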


Essentially, our goal is to construct an open ladder-point. The idea of
the proof roughly goes as follows. If we exited $\mathcal{B}_{(k-1)n}$
without encountering an open ladder point, then:
\begin{longlist}[(3)]
\item[(1)] We are likely to exit $\mathcal{B}_{kn}$ through the
positive boundary.
\item[(2)] At this point, we look at the $K$-$n$-closed component [the
corresponding definition is above (\ref{defcompo})] of that exit
point. There is a positive probability that the ``positive''
boundary of that set is open.
\item[(3)] If that is the case, Lemma \ref{exitnet} implies that there
is a positive chance that we exit through the ``positive''
side of that set, which is included in~$\partial^+ \mathcal{B}_{kn}$.
\end{longlist}

The previous construction implies that we exit $\mathcal{B}_{kn}$ at
an open point which is on the positive boundary $\partial^+\mathcal
{B}_{kn}$. Hence that point is an open ladder point which can be used
to easily construct a point verifying the properties of~$\mathcal{M}^{(K)}$.

In the end this means that for each new larger box encountered,
$\mathcal{B}_{kn}$, $\mathcal{B}_{(k+1)n}, \ldots\,$, there is a
positive chance to encounter an open ladder point through a procedure
which is largely independent of what happened before in smaller boxes.
This will allow us to show that eventually we encountered a point with
all the properties of $\mathcal{M}^{(K)}$.
\begin{pf}
We introduce the $K$-$n$-closed component of the exit point of
$\mathcal{B}_n$
%
\begin{equation}
\label{ba} \mathcal{K}(n)=\overline{K}_{X_{T_{\partial_i^+ \mathcal
{B}_n}}}(n)\subseteq
\mathcal{B}_n,
\end{equation}
where we recall that the notation $\overline{K}_x(n)$ was defined
above (\ref{defcompo}). In case $T_{\partial_i^+ \mathcal
{B}_n}=\infty
$ (i.e., we never reach the inner positive boundary of $\mathcal{B}_n$)
we simply set $\mathcal{K}(n)=\varnothing$ and $\partial\mathcal
{K}(n)=\varnothing$. This case typically does not occur.

We introduce the event
\[
C(n)=\bigl\{\mbox{for all } x \in\partial\mathcal{K}(n) \cap\mathcal{H}^+(n)
\mbox{, the vertex }x \mbox{ is open} \bigr\}
\]
as well as the event
\[
D(n)=\{T_{\partial\mathcal{K}(n)} \circ\theta_{T_{\partial_i
\mathcal
{B}_n}} =T_{\partial\mathcal{K}(n) \cap\mathcal{H}^+(n)}\circ
\theta_{T_{\partial_i \mathcal{B}_n}}\}
\]
and the event
\begin{eqnarray*}
E(n)&=&\{X_{T_{\partial B(n,n^{\alpha})}+2}=X_{T_{\partial
B(n,n^{\alpha
})}+1}+e_1=X_{T_{\partial B(n,n^{\alpha})}}+2e_1
\\
&&\hspace*{55pt} \mbox{ and } X_{T_{\partial B(n,n^{\alpha})}+2}, X_{T_{\partial
B(n,n^{\alpha})}+1} \mbox{ are open}\}.
\end{eqnarray*}

\begin{figure}

\includegraphics{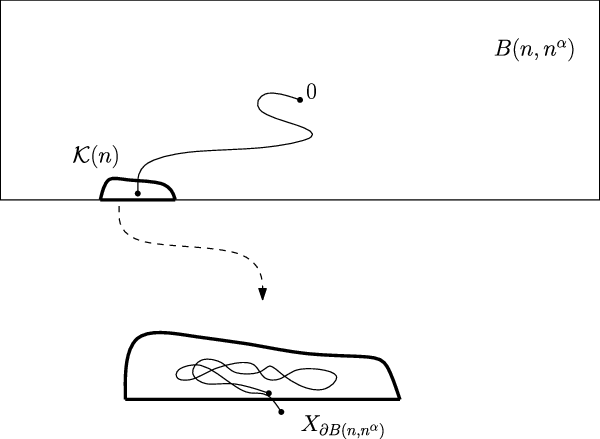}

\caption{A way to find an open ladder point.}\label{fig3}
\end{figure}

Let us consider an event in $A(n)\cap C(n)\cap D(n)\cap E(n)$. The
situation is illustrated in Figure \ref{fig3}. It verifies all the following conditions:
\begin{longlist}[(3)]
\item[(1)] on $A(n)$, we have $T_{\partial_i^+ \mathcal{B}_n} \leq
T_{\partial\mathcal{B}_n} $, so that
\[
T_{\partial B(n,n^{\alpha})}\geq T_{\partial\mathcal{K}(n)} \circ\theta
_{T_{\partial_i \mathcal{B}_n}};
\]
\item[(2)] on $D(n)$, by (\ref{ba}), we have
\[
T_{\partial\mathcal{K}(n)} \circ\theta_{T_{\partial_i \mathcal{B}_n}}
=T_{\partial\mathcal{K}(n) \cap\mathcal{H}^+(n)} \circ\theta
_{T_{\partial_i \mathcal{B}_n}}
\]
and hence
\[
T_{\partial B(n,n^{\alpha})}=T_{\partial\mathcal{K}(n) \cap\mathcal
{H}^+(n)};
\]
\item[(3)] on $C(n)$, we have $\partial\mathcal{K}(n) \cap\mathcal
{H}^+(n) $ is open.
\end{longlist}

Hence, on $A(n)\cap C(n)\cap D(n)\cap E(n)$, we see that
$X_{T_{\partial B(n,n^{\alpha})}} $ is a new maximum of the trajectory
in the direction $\vec{\ell}$ which is open, $X_{T_{\partial
B(n,n^{\alpha})}+1}=X_{T_{\partial B(n,n^{\alpha})}} +e_1$ and
$X_{T_{\partial B(n,n^{\alpha})}+2}=X_{T_{\partial B(n,n^{\alpha})}}+
2e_1 $ is a $K$-open point. This means that
\[
A(n)\cap C(n)\cap D(n)\cap E(n) \subset\{\mathcal{M}\leq T_{\partial
\mathcal{B}_n}+2\}.
\]

We have
%
\begin{eqnarray}
&& \PR\bigl[R(kn)\bigr]
\\
&&\qquad\leq \PR\bigl[R\bigl((k-1)n\bigr), \bigl(A(kn)\cap C(kn) \cap D(kn)
\cap E(kn)\bigr)^c\bigr]\nonumber
\\
&&\qquad\leq \PR\bigl[ A(kn)^c\bigr] + \PR\bigl[B(kn)^c
\bigr]+\cdots\nonumber
\\
\label{control1}
&&\qquad\quad{} + \PR\bigl[R\bigl((k-1)n\bigr),A(kn),B(kn),
C(kn)^c\bigr]+ \cdots
\\
\label{control2}
&&\qquad\quad{} + \PR\bigl[R\bigl((k-1)n\bigr),A(kn),B(kn), C(kn),
D(kn)^c\bigr]+\cdots
\\
\label{control3}
&&\qquad\quad{}+ \PR\bigl[R\bigl((k-1)n\bigr),A(kn),B(kn), C(kn),
D(kn), E(kn)^c\bigr],
\end{eqnarray}
which means that
%
\begin{eqnarray}
\label{long0} && \PR\bigl[R(kn)\bigr] - \PR\bigl[R\bigl((k-1)n\bigr),A(kn),B(kn)
\bigr]
\nonumber\\
&&\qquad\leq \PR\bigl[ A(kn)^c\bigr] + \PR\bigl[B(kn)^c
\bigr] \\
&&\qquad\quad{}- \PR\bigl[R\bigl((k-1)n\bigr),A(kn),B(kn), C(kn), D(kn),
E(kn)\bigr].\nonumber
\end{eqnarray}

The first term is controlled by Theorem \ref{BL},
%
\begin{equation}
\label{bres1} \PR\bigl[ A(kn)^c\bigr]\leq C\exp(-ckn)\leq C
\exp(-cn)
\end{equation}
and for any $M<\infty$, by Lemma \ref{tailb}, we can choose $K$ large
enough such that
%
\begin{equation}
\label{bres2} \PR\bigl[B(kn)^c\bigr] \leq n^{-M}.
\end{equation}

To finish the proof, we are going to control the terms in (\ref
{control1}), (\ref{control2}) and (\ref{control3}) which will allow us
to evaluate (\ref{long0}).\vspace*{9pt}

\textit{Step} 1: \textit{Control of the term in} (\ref{control1}).
We recall that $\mathcal{K}(kn)$ was defined at (\ref{ba}). For
$k\leq
n$, on $A(kn) \cap B(kn)$, we see that
%
\begin{equation}
\label{bb} \bigl\llvert\mathcal{K}(kn)\bigr\rrvert\leq\ln(kn)\leq2
\ln n
\end{equation}
and in particular,
\begin{eqnarray*}
&& \PR\bigl[R\bigl((k-1)n\bigr),A(kn),B(kn), C(kn)^c\bigr]
\\
&&\qquad= \sum_{F\subset\Z^d, \llvert F\rrvert \leq2 \ln n } \PR\bigl[R\bigl((k-1)n
\bigr),A(kn),B(kn),\mathcal{K}(kn)=F, C(kn)^c\bigr]
\\
&&\qquad= \sum_{F\subset\Z^d, \llvert F\rrvert \leq2 \ln n } {\mathbf E}\bigl
[P^{\omega
}\bigl[R
\bigl((k-1)n\bigr),A(kn),B(kn),\mathcal{K}(kn)=F\bigr]
\\
&&\hspace*{59pt}\hspace*{38.7pt}\qquad\quad{} \times{\mathbf1} {\bigl\{\mbox{some }x \in\partial F \cap\mathcal{H}^+(kn)
\mbox{ is closed}\bigr\}} \bigr].
\end{eqnarray*}

We may now see that:
\begin{longlist}[(2)]
\item[(1)] on the one hand, the random variable $P^{\omega
}[R((k-1)n),A(kn),B(kn)$, $\mathcal{K}(kn)=F]$ is measurable with respect
to $\sigma\{c_*([x,y])$, with $x,y \notin\mathcal{H}^+_{nk} \}$;
\item[(2)] on the other hand, the event $\{ \mbox{some }x \in
\partial
F \cap\mathcal{H}^+(kn) \mbox{ is closed}\}$ is measurable with
respect to $\sigma\{c_*([x,y])$, with $x \in\mathcal
{H}^+_{nk} \}$.
\end{longlist}

Hence, the random variables $P^{\omega}[R((k-1)n),A(kn),B(kn),\mathcal
{K}(kn)=F]$ and ${\mathbf1}{\{\mbox{some }x \in\partial F \cap
\mathcal{H}^+(kn) \mbox{ is closed}\}} $ are ${\mathbf P}$-independent.
This yields
\begin{eqnarray*}
&& \PR\bigl[R\bigl((k-1)n\bigr),A(kn),B(kn), C(kn)^c\bigr]
\\
&&\qquad= \sum_{F\subset\Z^d, \llvert F\rrvert \leq2 \ln n } \PR\bigl
[R\bigl((k-1)n
\bigr),A(kn),B(kn),\mathcal{K}(kn)=F\bigr]
\\
&&\hspace*{60pt}\qquad\quad{} \times{\mathbf P}\bigl[ \mbox{some }x \in\partial F \cap\mathcal{H}^+(kn)
\mbox{ is closed} \bigr]
\\
&&\qquad\leq \sum_{F\subset\Z^d, \llvert F\rrvert \leq2 \ln n } \PR\bigl
[R\bigl((k-1)n
\bigr),A(kn),B(kn),\mathcal{K}(kn)=F\bigr]
\\
&&\hspace*{60pt}\qquad\quad{} \times\bigl(1-{\mathbf P}\bigl[ \mbox{all }x \in\partial F \cap\mathcal{H}^+(kn)
\mbox{ are open} \bigr]\bigr).
\end{eqnarray*}

Now, we know by the Harris inequality \cite{Harris} that for $F\subset
\Z^d$, with $\llvert F\rrvert \leq2 \ln n$
\begin{eqnarray*}
{\mathbf P}\bigl[ \mbox{all }x \in\partial F \cap\mathcal{H}(kn) \mbox{ are open}
\bigr]&\geq&{\mathbf P}[x\mbox{ is open}]^{d\llvert F\rrvert}
\\
&\geq&\bigl(1-\epsilon(K)\bigr)^{2d \ln n}\\
&=&n^{2 d\ln(1-\epsilon(K))},
\end{eqnarray*}
where we recall that $\epsilon(K)=P_*[c_*\notin[1/K,K]]$.\eject

For any $\epsilon_1>0$, by choosing $K$ large enough, we can assume
that $2 d\ln(1-\epsilon(K))\geq-\epsilon_1$. This means that the two
previous equations imply that
\begin{eqnarray*}
&&
\PR\bigl[R\bigl((k-1)n\bigr),A(kn),B(kn), C(kn)^c\bigr] \\
&&\qquad\leq
\bigl(1-n^{-\epsilon_1}\bigr) \PR\bigl[R\bigl((k-1)n\bigr
),A(kn),B(kn)\bigr],
\end{eqnarray*}
which means that for any $\epsilon_1>0$, we have
%
\begin{eqnarray}
\label{bres3}
&&
n^{-\epsilon_1} \PR\bigl[R\bigl((k-1)n\bigr
),A(kn),B(kn)\bigr]\nonumber\\[-8pt]\\[-8pt]
&&\qquad\leq
\PR\bigl[R\bigl((k-1)n\bigr),A(kn),B(kn), C(kn)\bigr].\nonumber
\end{eqnarray}

\vspace*{9pt}

\textit{Step} 2: \textit{Control of the term in} (\ref{control2}).
We want to upper-bound $\PR[R((k-1)n),A(kn),B(kn), C(kn), D(kn)^c]$.
On $A(kn)\cap B(kn)\cap C(kn)$, we have reached the positive inner
border of $\mathcal{B}_{nk}$, and we know that $\mathcal{K}(kn)$ is not
too big and its ``positive border'' is open. By applying
Lemma \ref{exitnet}, we have an estimate for the probability of
exiting $\mathcal{K}(kn)$ through the positive border.

To start, we wish to decompose $\PR[R((k-1)n),A(kn),B(kn), C(kn),\break
D(kn)^c]$ according to all possible values of $X_{T_{\partial_i
\mathcal
{B}_{nk}}}$ and $\mathcal{K}(kn)$. For this, we notice that:
\begin{longlist}[(2)]
\item[(1)] on $A(kn)$, we have $X_{T_{\partial_i \mathcal
{B}_{nk}}}\in
\partial_i^+ \mathcal{B}_{nk}$, and by the definition of $\mathcal
{K}(kn)$ [see (\ref{ba})], $X_{T_{\partial_i \mathcal{B}_{nk}}}\in
\mathcal{K}(kn)$;
\item[(2)] moreover, on $A(kn) \cap B(kn)$, we have (\ref{bb}).
\end{longlist}

Hence,
\begin{eqnarray*}
&&\PR\bigl[R\bigl((k-1)n\bigr),A(kn),B(kn), C(kn), D(kn)^c\bigr]
\\
&&\qquad\leq \sum_{y,F} \PR\bigl[R\bigl((k-1)n
\bigr),A(kn),B(kn),\\
&&\qquad\quad\hspace*{27.5pt}X_{T_{\partial_i
\mathcal
{B}_{nk}}}=y,\mathcal{K}(kn)=F, C(kn),D(kn)^c
\bigr],
\end{eqnarray*}
where $\sum_{y,F}$ stands for $ \sum_{y\in\partial_i^+ \mathcal
{B}_{nk}} \sum_{F\subset\Z^d, \llvert F\rrvert \leq2 \ln
n, y\in F }$.

Let us notice that, for a fixed $\omega$, the events $R((k-1)n)$,
$A(kn)$, $B(kn)$, $\{X_{T_{\partial_i \mathcal{B}_{nk}}}=y\}$ and $\{
\mathcal{K}(kn)=F\}$ are $P^{\omega}$-measurable\vspace*{2pt} with respect to $\{
X_i,\break i \leq T_{\partial_i \mathcal{B}_{nk}}\}$. Thus, we may use the
Markov property at $T_{\partial_i \mathcal{B}_{nk}}$ to see that
%
\begin{eqnarray}
\label{bc} &&\PR\bigl[R\bigl((k-1)n\bigr),A(kn),B(kn), C(kn), D(kn)^c
\bigr]
\nonumber\hspace*{-35pt}\\
&&\quad\leq \sum_{y,F} {\mathbf E}
\bigl[P^{\omega
}\bigl[R\bigl((k-1)n\bigr),A(kn),B(kn),X_{T_{\partial_i \mathcal{B}_{nk}}}=y,
\mathcal{K}(kn)=F\bigr]\hspace*{-35pt}
\\
&&\hspace*{22pt}\qquad{} \times P^{\omega}_y[T_{\partial
F}<T_{\partial F \cap\mathcal{H}^+(kn)}]{
\mathbf1} {\bigl\{\mbox{$x$ is open, for }x\in\partial F \cap\mathcal{H}^+(kn)
\bigr\}} \bigr].\nonumber\hspace*{-35pt}
\end{eqnarray}

We wish to apply Lemma \ref{exitnet}, for this we will first prove
that on $\{\mathcal{K}(kn)=F\}$ and $\{\mbox{$x$ is open, for }x\in
\partial F \cap\mathcal{H}^+(kn)\}$ the set $\partial_E F$ is composed
of normal edges. Indeed:
\begin{longlist}[(2)]
\item[(1)] notice that the definition of $\mathcal{K}(kn)$ at (\ref
{ba}) [which is a $K$-$(kn)$-closed component] implies that $e$ is
normal for all $e\in\partial_E \mathcal{K}(kn)$ when $e$ has no
endpoint in $\mathcal{H}^+(kn)$;
\item[(2)] moreover, if for any $x\in\partial F \cap\mathcal{H}^+(kn)$
the vertex $x$ is open, then any edge $e\in\partial_E F$ with one
endpoint in $\mathcal{H}^+(kn)$ is normal.
\end{longlist}

Hence $\partial_E F$ is a set made of normal edges only.

Now, for any $y\in F \cap\partial_i^+ \mathcal{B}_{nk}$, there exists
$x\in\mathcal{H}^+(kn)$ adjacent to $y$. Since $F\subset\mathcal
{B}_{nk}$, we can see that for any $z\in\partial F\cup F$ we have
$(x-z)\cdot\vec{\ell} \geq-1$ and $(y-z)\cdot\vec{\ell} \geq-2$.
Using this, along with the fact that $\partial_E F$ is made of normal
edges, we see with (\ref{defconduct}) and Remark \ref{fakeUE} that
\[
\mbox{for all $e\in\partial_E F$}\qquad c^{\omega}(e)\leq
K^2 e^{3\lambda} c^{\omega}\bigl([x,y]\bigr).
\]

We can apply Lemma \ref{exitnet} to $F$, and we see that if $\{
\mathcal
{K}(kn)=F\}$ and $\{\mbox{$x$ is open, for }x\in\partial F \cap
\mathcal
{H}^+(kn)\}$, then we obtain
\begin{eqnarray*}
P^{\omega}_y[T_{\partial F}<T_{\partial F \cap\mathcal{H}(kn)}] &\leq&
P^{\omega}_y[T_{\partial F}<T_{x}] \leq
\bigl(1-c \llvert F\rrvert^{-1}\bigr) \\
&\leq&\bigl(1-c\ln^{-1} n
\bigr),
\end{eqnarray*}
since $\llvert F\rrvert\leq2 \ln n$.

This turns (\ref{bc}) into
\begin{eqnarray*}
&&\PR\bigl[R\bigl((k-1)n\bigr),A(kn),B(kn), C(kn), D(kn)^c\bigr]
\\
&&\qquad\leq \sum_{y,F} {\mathbf E}\bigl[P^{\omega
}
\bigl[R\bigl((k-1)n\bigr),A(kn),B(kn),X_{T_{\partial_i \mathcal
{B}_{nk}}}=y,\mathcal{K}(kn)=F
\bigr]
\\
&&\hspace*{113.2pt}\qquad\quad{} \times{\mathbf1} {\bigl\{\mbox{$x$ is open, for }x\in\partial F \cap
\mathcal
{H}^+(kn)\bigr\}} \bigr] \\
&&\qquad\quad\hspace*{12.5pt}{}\times\bigl(1-c\ln^{-1} n\bigr)
\\
&&\qquad\leq \bigl(1-c\ln n^{-1}\bigr) \PR\bigl[R\bigl((k-1)n
\bigr),A(kn),B(kn), C(kn)\bigr],
\end{eqnarray*}
which means that for some $c>0$, we have
%
\begin{eqnarray}
\label{bres4} &&c\ln n^{-1} \PR\bigl[R\bigl((k-1)n\bigr),A(kn),B(kn),
C(kn)\bigr]
\nonumber\\[-8pt]\\[-8pt]
&&\qquad\leq\PR\bigl[R\bigl((k-1)n\bigr),A(kn),B(kn), C(kn),
D(kn)\bigr].\nonumber
\end{eqnarray}

\vspace*{9pt}

\textit{Step} 3: \textit{Control of the term in} (\ref{control3}).
On $ A(kn) \cap B(kn) \cap C(kn) \cap D(kn)$, we know that
$X_{T_{\partial\mathcal{B}_{kn}}}\in\partial^+ \mathcal{B}_{kn}$ is
an open ladder point. Moreover, it is important to notice that the
event $E(kn)$ has a positive probability of happening and does not
depend on what happened inside $\mathcal{B}_{kn}$. We introduce
\[
R'\bigl((k-1)n\bigr)=R\bigl((k-1)n\bigr)\cap A(kn)\cap B(kn)\cap
C(kn)\cap D(kn).
\]

A vertex is said to be $x$-open if it is open in $\omega_x$ coinciding
with $\omega$ on all edges, but those that are adjacent to $x$ which
are normal in $\omega_x$. We see
\begin{eqnarray*}
&&\PR\bigl[R\bigl((k-1)n\bigr),A(kn),B(kn), C(kn), D(kn), E(kn)^c
\bigr]
\\
&&\qquad\leq \sum_{x\in\partial^+ \mathcal{B}_{nk}} \PR\bigl[R'
\bigl((k-1)n\bigr), X_{T_{\partial\mathcal{B}_{nk}}}=x, x\mbox{ is
open}, E(kn)^c
\bigr]
\\
&&\qquad\leq \sum_{x\in\partial^+ \mathcal{B}_{nk}} {\mathbf E}\bigl[P^{\omega
}
\bigl[R'\bigl((k-1)n\bigr), X_{T_{\partial\mathcal{B}_{nk}}}=x\bigr]
{\mathbf1} {\{
x\mbox{ is open}\}},
\\
&&\qquad\quad\hspace*{46pt} \bigl({\mathbf1} {\{x+e_1 \mbox{ or }x+2e_1 \mbox{
is not $x$-open}\}}
\\
&&\qquad\quad\hspace*{48.5pt}{} +P_x^{\omega}[X_1\neq x+e_1 \mbox{
or } X_2\neq x+2e_1] \\
&&\qquad\quad\hspace*{71pt}{}\times{\mathbf1} {\{x+e_1,x+2e_1
\mbox{ are $x$-open}\}}\bigr)\bigr].
\end{eqnarray*}

On $\{x+e_1,x+2e_1 \mbox{ are $x$-open}\}\cap\{x\mbox{ is open}\}$, we
see that $P_x^{\omega}[X_1=x+e_1, X_2=x+2e_1] \geq c>0$ by Remark \ref
{fakeUE}.
\begin{eqnarray*}
&&\PR\bigl[R\bigl((k-1)n\bigr),A(kn),B(kn), C(kn), D(kn), E(kn)^c
\bigr]
\\
&&\qquad\leq\sum_{x\in\partial^+ \mathcal{B}_{nk}} {\mathbf E}\bigl[P^{\omega
}
\bigl[R'\bigl((k-1)n\bigr), X_{T_{\partial\mathcal{B}_{nk}}}=x\bigr]
{\mathbf1} {\{
x\mbox{ is open}\}},
\\
&& \hspace*{79.1pt}\bigl({\mathbf1} {\{x+e_1 \mbox{ or }x+2e_1 \mbox{
is not $x$-open}\}} \\
&&\qquad\hspace*{68pt}{}+(1-c) {\mathbf1} {\{x+e_1,x+2e_1
\mbox{ are $x$-open}\}}\bigr)\bigr].
\end{eqnarray*}

Recalling the definition of $\nu$ at the beginning of Section \ref
{sectnotation}, we may also see that $\{R'((k-1)n), X_{T_{\partial
\mathcal{B}_{nk}}}=x, x\mbox{ is open}\}$ is measurable with respect to
$\sigma\{c_*(e), e\in E(\mathcal{B}_{nk}) \mbox{ or } e=[x,x+e']$
with $e'\in\nu\}$, whereas $\{x+e_1,x+2e_1\break \mbox{are $x$-open}\}$
is measurable with respect to $\sigma\{c_*(e), e\notin E(\mathcal
{B}_{nk})$ and $e\neq[x,x+e']$ with $e'\in\nu\}$. So
these random variables are independent, which yields
\begin{eqnarray*}
&& \PR\bigl[R\bigl((k-1)n\bigr),A(kn),B(kn), C(kn), D(kn), E(kn)^c
\bigr]
\\
&&\qquad\leq \PR\bigl[R'\bigl((k-1)n\bigr)\bigr]\bigl({\mathbf
P}[x+e_1 \mbox{ or }x+e_2 \mbox{ is not $x$-open}]
\\
&&\hspace*{73.5pt}\qquad\quad{} +(1-c){\mathbf P}[x+e_1,x+e_2 \mbox{ are $x$-open}]
\bigr)
\\
&&\qquad\leq(1-c)\PR\bigl[R\bigl((k-1)n\bigr),A(kn),B(kn), C(kn), D(kn)\bigr],
\end{eqnarray*}
since ${\mathbf P}[x+e_1,x+e_2 \mbox{ are $x$-open}]>0$. This means that
there exists $c>0$ such that
%
\begin{eqnarray}
\label{bres5} && c\PR\bigl[R\bigl((k-1)n\bigr),A(kn),B(kn), C(kn),
D(kn)\bigr]
\nonumber\\[-8pt]\\[-8pt]
&&\qquad\leq  \PR\bigl[R\bigl((k-1)n\bigr),A(kn),B(kn), C(kn), D(kn), E(kn)
\bigr].\nonumber
\end{eqnarray}

\vspace*{9pt}

\textit{Step} 4: \textit{Conclusion.}
For any $\epsilon_1>0$, we see using (\ref{bres1}), (\ref
{bres2}), (\ref
{bres3}), (\ref{bres4}), (\ref{bres5}) (which are valid $K$ chosen
larger than some $K_0$ depending only on $M<\infty$) and (\ref{long0}),
that we have for any $k\in[2,n]$
\[
\PR\bigl[R(kn)\bigr]\leq\PR\bigl[R\bigl((k-1)n\bigr)\bigr] \bigl(1-c\ln
n^{-1}n^{-\epsilon_1} \bigr)+Cn^{-M},
\]
which implies the result.
\end{pf}

We now prove the following:
%
\begin{lemma}
\label{tailM}
For any $M$, there exists $K_0$ such that, for any $K\geq K_0$,
\[
\PR[X_{\mathcal M^{(K)}} \cdot\vec{\ell} \geq n ] \leq C(K)n^{-M}.
\]
\end{lemma}

\begin{pf}
For any $M<\infty$, by Lemma \ref{recursion}, there exists $K_0$ such
that, for any $K\geq K_0$ such that
\[
\PR\bigl[R(nk)\bigr] \leq\bigl(1-cn^{-1/2}\bigr)\PR\bigl[R\bigl(n(k-1)
\bigr)\bigr]+n^{-M}.
\]

By a simple induction, this means that for
\[
\PR\bigl[R\bigl(n^2\bigr)\bigr]\leq\bigl(1-cn^{-1/2}
\bigr)^n + n^{-M+1} \leq2n^{-M+1}.
\]

Recalling the definition of $R(n)$ at (\ref{defR}), we see by the
Borel--Cantelli lemma that $\mathcal{M}^{(K)}<\infty$.

Also this implies that
\[
\PR\bigl[X_{\mathcal{M}^{(K)}}\cdot\vec{\ell} > n^2+2\bigr]
\leq2n^{-M+1}
\]
and
\[
\PR[X_{\mathcal{M}^{(K)}}\cdot\vec{\ell} > n] \leq Cn^{-(M+1)/2},
\]
which proves the lemma, since $M$ is arbitrary.
\end{pf}

\subsection{Consequence of our estimates on $\mathcal{M}$}

A natural consequence of the previous estimate is that the successive
open ladder points cannot be too distant, and this is what we aim at
showing next in a form that will be useful for us in the sequel. Let us
introduce the ladder times
%
\begin{equation}
\label{defW} W_0=0 \quad\mbox{and}\quad W_{k+1}=
\inf\{n \geq0, X_n\cdot\vec{\ell} >X_{W_k} \cdot\vec{\ell}
\}.
\end{equation}

We introduce the event
%
\begin{eqnarray}
\label{defM} M^{(K)}(n)&=&\bigl\{\mbox{for $k$ with
$W_k \leq\Delta_n$,}\nonumber\\[-8pt]\\[-8pt]
&&\hspace*{5.6pt}\mbox{we have } X_{\mathcal{M}^{(K)}\circ\theta
_{W_k}+W_k}\cdot\vec{
\ell} -X_{W_k}\cdot\vec{\ell}\leq n^{1/2}\bigr\}.\nonumber
\end{eqnarray}

\begin{lemma}
\label{Mn}
For any $M<\infty$, there exists $K_0$ such that, for any $K\geq K_0$
we have
\[
\PR\bigl[M^{(K)}(n)^c\bigr] \leq Cn^{-M}.
\]
\end{lemma}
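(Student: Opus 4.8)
\textbf{Proof plan for Lemma~\ref{Mn}.}

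The plan is to control $M^{(K)}(n)^c$ by summing, over all ladder indices $k$ with $W_k\leq \Delta_n$, the probability that the attempt to reach an open ladder point starting from $X_{W_k}$ carries the walk a distance at least $n^{1/2}$ in the direction $\vec\ell$. The key input is Lemma~\ref{tail_M}, which gives a polynomial tail on $X_{\mathcal M^{(K)}}\cdot\vec\ell$. First I would reduce to a union bound: on $M^{(K)}(n)^c$ there is at least one ladder index $k$ with $W_k\leq\Delta_n$ and $X_{\mathcal M^{(K)}\circ\theta_{W_k}+W_k}\cdot\vec\ell - X_{W_k}\cdot\vec\ell > n^{1/2}$. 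Since the walk is directionally transient (Proposition~\ref{dir_trans}) and each ladder step increases $X\cdot\vec\ell$ by at least $1$ while $\Delta_n \leq$ some level, the number of relevant indices $k$ is at most $O(n)$ — more precisely, at most $n+1$ since $X_{W_k}\cdot\vec\ell \geq k$ roughly and we only care about $W_k\leq\Delta_n$, i.e. $X_{W_k}\cdot\vec\ell$ bounded by something of order $n$ (one should be a little careful: one really wants $X_{W_k}\cdot\vec\ell \leq n + O(1)$, which holds since $W_k\leq\Delta_n$ forces $X_{W_k}\notin\mathcal H^+(n)$ up to one step).

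The main step is then to apply the strong Markov property at each ladder time $W_k$. At such a time $X_{W_k}$ is a fresh maximum of the trajectory in direction $\vec\ell$; the quantity $X_{\mathcal M^{(K)}\circ\theta_{W_k}+W_k}\cdot\vec\ell - X_{W_k}\cdot\vec\ell$ is, under $P^\omega_{X_{W_k}}$ with the environment shifted, distributed exactly like $X_{\mathcal M^{(K)}}\cdot\vec\ell - X_0\cdot\vec\ell$ started from a ladder point configuration. The subtlety is that $\mathcal M^{(K)}$ is defined with a past-dependent condition ($X_j\cdot\vec\ell < X_{i-2}\cdot\vec\ell$ for $j<i-2$), but since $W_k$ is a ladder time, the whole past before $W_k$ lies in $\mathcal H^-(X_{W_k}\cdot\vec\ell)$, so the post-$W_k$ definition of $\mathcal M^{(K)}$ matches the fresh one used in Lemma~\ref{tail_M}; I would spell this matching out. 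Thus, up to a shift, each term in the union bound is bounded by $\PR[X_{\mathcal M^{(K)}}\cdot\vec\ell \geq n^{1/2}]$.

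Combining: choosing $K\geq K_0$ so that Lemma~\ref{tail_M} holds with exponent $M'$, each of the $O(n)$ terms is at most $C(K)(n^{1/2})^{-M'} = C(K)n^{-M'/2}$, so the union bound gives $\PR[M^{(K)}(n)^c]\leq C(K)n^{1-M'/2}$; taking $M'=2M+2$ yields the claimed bound $n^{-M}$ (absorbing the constant for $n$ large and adjusting $K_0$ accordingly, as is customary in this paper where constants may be swallowed). The main obstacle I anticipate is the bookkeeping in the Markov-property step: one must verify that conditioning on the past up to $W_k$ leaves the forward environment fresh enough that Lemma~\ref{tail_M} applies, and that the past-dependence in the definition of $\mathcal M^{(K)}$ is automatically satisfied because $W_k$ is a ladder time (so no genuine conditioning on high conductances sneaks in through the back door). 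Everything else is a routine union bound over a polynomially-sized index set.
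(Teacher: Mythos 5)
Your overall shape (union bound over ladder indices, then Lemma~\ref{tail_M}) is the paper's, but two steps in your plan do not hold as stated. First, the counting: your claim that there are at most $O(n)$ relevant ladder indices because ``each ladder step increases $X\cdot\vec\ell$ by at least $1$'' is false. A ladder time only requires a strict increase of the running maximum of $X\cdot\vec\ell$, and since the projections $\pm e_i\cdot\vec\ell$ of unit steps need not be rationally related, consecutive ladder heights can differ by arbitrarily small amounts; the lower bound of size $2$ in~(\ref{right_dir}) concerns the $S_k$'s, not the $W_k$'s. Without further input, the number of indices $k$ with $W_k\leq\Delta_n$ is only bounded by the number of distinct sites visited in the slab $\{x\cdot\vec\ell\leq n\}$, which is infinite, so there is no a priori polynomial bound at all. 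The paper's proof supplies the missing ingredient: it intersects with $\{T_{\partial B(n,n^{\alpha})}=T_{\partial^+ B(n,n^{\alpha})}\}$, whose complement costs $e^{-cn}$ by Theorem~\ref{BL}, and on that event all ladder points before $\Delta_n$ are distinct sites of $B(n,n^{\alpha})$, hence at most $n^{2d\alpha}$ of them. Your proposal never invokes Theorem~\ref{BL}, so this confinement step is missing.

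Second, the decoupling: bounding each term by $\PR[X_{\mathcal M^{(K)}}\cdot\vec\ell\geq n^{1/2}]$ on the grounds that ``the forward environment is fresh at a ladder time'' is not justified under the annealed law. After $W_k$ the walk may backtrack and re-use the environment in $\mathcal{H}^-_{X_{W_k}}$, and the position $X_{W_k}$ is itself correlated with the environment; indeed the whole point of Section~\ref{sect_open} is that ladder points \emph{are} conditioned (the edge leading there is biased toward high conductance), which is why the lemma concerns $K$-open ladder points at all. The paper needs no freshness claim: it decomposes over the possible positions $x\in B(n,n^{\alpha})$ of $X_{W_j}$, applies the quenched Markov property at $W_j$, bounds the indicator of the past event by $1$ to obtain ${\bf E}\bigl[P^{\omega}_x[X_{\mathcal M^{(K)}}\cdot\vec\ell-x\cdot\vec\ell\geq n^{1/2}]\bigr]$, and concludes by translation invariance of ${\bf P}$; the resulting polynomial number of pairs $(k,x)$ is harmless because the exponent in Lemma~\ref{tail_M} is arbitrary. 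Your argument can be repaired exactly along these lines, but as written both the counting of ladder indices and the passage to the annealed bound have genuine gaps.
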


\begin{pf}
We introduce the event
\[
M_j(n)=\bigl\{\mbox{for $k\leq j-1$, we have } X_{\mathcal{M}^{(K)}\circ
\theta_{W_k}+W_k}
\cdot\vec{\ell} -X_{W_k}\cdot\vec{\ell}\leq n^{1/2}\bigr\}
\]
and the event
\[
N_j(n)=\bigl\{X_{\mathcal{M}^{(K)}\circ\theta_{W_j}+W_j}\cdot\vec{\ell} -X_{W_j}
\cdot\vec{\ell} > n^{1/2} \mbox{ and } X_{W_j}\in B
\bigl(n,n^{\alpha}\bigr)\bigr\}.
\]

On $M^{(K)}(n)^c\cap\{T_{\partial B(n,n^{\alpha})}=T_{\partial^+
B(n,n^{\alpha})}\}$, since all $X_{W_i} $ are different necessarily, we
have $k\leq Cn^{2d\alpha}$ for any $k$ such that $W_k\leq\Delta_n$.
Moreover there exists a $k\leq\Delta_n$ such that $N_k(n)$ holds. By
decomposing along the smallest such $k$, we see that
%
\begin{eqnarray}
\label{sstep} \PR\bigl[M(n)^c\bigr] &\leq&\sum
_{k=1}^{Cn^{2d\alpha}} \PR\bigl[M_k(n),N_k(n),
T_{\partial B(n,n^{\alpha})}=T_{\partial^+ B(n,n^{\alpha})}\bigr]\nonumber\\[-8pt]\\[-8pt]
&&{}+\exp(-cn)\nonumber
\end{eqnarray}
by Theorem \ref{BL}.

Now, we see that on $\{ M_j(n),N_j(n),T_{\partial B(n,n^{\alpha
})}=T_{\partial^+ B(n,n^{\alpha})}\}$ we have $X_{W_{j}}\in
B(n,n^{\alpha})$, so by a simple union bound argument,
\begin{eqnarray*}
&& \PR\bigl[M_j(n),N_j(n),T_{\partial B(n,n^{\alpha})}=T_{\partial^+
B(n,n^{\alpha})}
\bigr]
\\
&&\qquad\leq  \sum_{x\in B(n,n^{\alpha})}\PR\bigl[X_{W_{j}}=x,X_{\mathcal
{M}^{(K)}\circ\theta_{W_j}+W_j}
\cdot\vec{\ell} -x\cdot\vec{\ell}>n^{1/2}\bigr]
\\
&&\qquad\leq  \sum_{x\in B(n,n^{\alpha})} {\mathbf E}\bigl[P^{\omega}_x
\bigl[X_{\mathcal
{M}^{(K)}}\cdot\vec{\ell}-x\cdot\vec{\ell} \geq n^{1/2}
\bigr]\bigr]
\\
&&\qquad\leq  \bigl\llvert B\bigl(n,n^{\alpha}\bigr)\bigr\rrvert\PR\bigl
[\mathcal
{M}^{(K)} \geq n^{1/2}\bigr]
\end{eqnarray*}
by Markov's property at $W_{j}$ and translation invariance of the
environment. Hence, by the two last equations, Lemma \ref{tailM} and
using (\ref{sstep}), we may see that
\[
\PR\bigl[M(n)^c\bigr]\leq n^{-M}.
\]
\upqed\end{pf}

\section{Regeneration times}\label{sectfirstregen}

The aim of this section is to define regeneration times and prove some
standard properties on them. These properties are summed up in
Section \ref{fundprop}.

The idea is to find a maximum of the trajectory, in the direction $\vec
{\ell}$ from which the random walk will never backtrack, that is, go to
a point with lower scalar product with $\vec{\ell}$. Essentially, we
would like to call regeneration time the first time that such a
situation occurs. For technical reasons it is convenient for us to
consider only the maxima which are also $K$-open points (or more
precisely points verifying the properties of $\mathcal{M}$). This is
the only difference from the standard definitions of regeneration times.

We define the time it takes for the walk to go back beyond (with
respect to the scalar product with $\vec{\ell}$) its starting point
\[
D=\inf\{n> 0\mbox{ such that } X_n\cdot\vec{\ell}\leq
X_0\cdot\vec{\ell}\}.
\]

Also we introduce the maximum (in the direction $\vec{\ell}$) of the
trajectory before $D$
\[
M=\sup_{n\leq D} X_n\cdot\vec{\ell}.
\]

We define the configuration dependent stopping times $S_k$, $k\geq0$
and the levels $M_k$, $k\geq0$,
%
\begin{eqnarray}
\label{defS} &\displaystyle S_0=0,\qquad M_0=X_0
\cdot\vec{\ell}\quad\mbox{and}&\nonumber\\[-8pt]\\[-8pt]
&\displaystyle \mbox{for $k\geq0$}\qquad
S_{k+1}=\mathcal{M}^{(K)}\circ\theta_{T_{\mathcal
{H}^+(M_k)}}+T_{\mathcal{H}^+(M_k)},&\nonumber
\end{eqnarray}
where
%
\begin{equation}
\label{defrealM} M_k=\sup\{X_m \cdot\vec{
\ell} \mbox{ with }0\leq m \leq R_k\}
\end{equation}
with
\[
R_{k} = D\circ\theta_{S_k}+S_k.
\]

In words, $R_k$ is the time it takes to go back beyond $X_{S_k}$, $M_k$
is the maximum of the trajectory before $R_k$, and $S_{k+1}$ is the
first time we see an open ladder point (more precisely a point
verifying the properties of $\mathcal{M}^{(K)}$) after getting past $M_k$.

These definitions imply that if $S_{i+1}<\infty$, then
%
\begin{equation}
\label{rightdir} X_{S_{i+1}}\cdot\vec{\ell} -
X_{S_i} \cdot\vec\ell\geq2 e_1\cdot\vec{\ell} \geq
\frac{2}{\sqrt d}.
\end{equation}

Finally we define the basic regeneration time
%
\begin{equation}
\label{deftau1} \tau_1=S_N\qquad \mbox{with } N=\inf\{k \geq1\mbox{ with } S_k<\infty\mbox{ and }
M_k=\infty\}.
\end{equation}

Let us give some intuition about those definitions. Assume $S_k$ is
constructed; it is, by definition, an open-point ladder point. We will
show that at such a point there is a lower bounded chance of never
backtracking again. If the walks never backtrack again (then
$R_k=\infty
$ and thus $M_k=\infty$), we have created a point separating the past
and the future of the random walk: a regeneration point called $\tau
_1$. This finishes the procedure.

In case a regeneration time is not created, the future of the random
walk and the environment ahead of us may be conditioned by the fact
that the walk will eventually backtrack: a conditioning limited to the
conductances of the edges adjacent to the trajectory of the walk before
it backtracks and the trajectory of the walk itself before backtracking.

In our definitions we introduced a random variable $M_k$ chosen large
enough so that all the edges we just described have one endpoint in
$\mathcal{H}^-(M_k)$. Hence the environment in $\mathcal{H}^+(M_k)$ and
the walk after it reaches this set are largely unconditioned. After
reaching that set, we have the opportunity to construct another open
ladder point (for a walk free of any constraints from its past) by
considering the first open ladder point after entering $\mathcal
{H}^+(M_k)$. This is how we define $S_{k+1}$ and from there we start
over the procedure until we find a regeneration time.

\subsection{Control the variables $M$}

We want to show that the random variables $M_k$ in (\ref{defrealM})
cannot be too big. For this we prove the following lemma:

\begin{lemma}
\label{M1}
We have
\[
\PR[M\geq n \mid D<\infty]\leq C\exp(-cn).
\]
\end{lemma}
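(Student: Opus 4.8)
The plan is to show that, conditioned on the walk backtracking below its starting level (the event $D'<\infty$), the maximal forward excursion $M^1=\sup_{n\le D'}X_n\cdot\vec\ell$ has an exponential tail. The natural tool is Theorem~\ref{BL}: reaching level $n$ before returning to a fixed back-level is, up to the geometry of the box $B(L,L^\alpha)$, exactly the kind of large-box exit event that Theorem~\ref{BL} controls, so a point at forward distance $n$ that later backtracks should be exponentially costly. First I would reduce to an unconditioned estimate via $\PR[M^1\ge n\mid D'<\infty]\le \PR[M^1\ge n, D'<\infty]/\PR[D'<\infty]$ and observe that $\PR[D'<\infty]$ is bounded below by a positive constant (for instance, $D=0$ on $\{X_0\text{ is bad}\}$, and ${\bf P}[X_0\text{ bad}]>0$ for $K$ fixed; alternatively one uses that with positive probability the walk takes a single step in direction $-e_1$ with $e_1\cdot\vec\ell>0$, forcing $D'<\infty$). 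So it suffices to prove $\PR[M^1\ge n, D'<\infty]\le C\exp(-cn)$.

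The key step is then: on $\{M^1\ge n\}$ the walk reaches the hyperplane $\mathcal H^+(X_0\cdot\vec\ell+n)$, and on $\{D'<\infty\}$ it must afterwards (or before, depending on order) return to $\mathcal H^-(X_0\cdot\vec\ell)$. Consider the box $B$ of "radius" roughly $n/3$ in the $\vec\ell$-direction and $(n/3)^\alpha$ in the transverse directions, re-centered appropriately along the trajectory; more precisely, I would place a box so that the event $\{M^1\ge n, D'<\infty\}$ forces the walk started somewhere inside to exit $B(L,L^\alpha)$ through a face that is \emph{not} $\partial^+B(L,L^\alpha)$, for $L$ of order $n$. By the strong Markov property at the first time the walk gets within distance $L$ of the relevant back-level (or, symmetrically, by stopping at $\Delta_{X_0\cdot\vec\ell+2L}$ if the walk goes forward first), one lands in a configuration where Theorem~\ref{BL} applies directly, giving probability at most $e^{-cL}=e^{-cn}$. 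One has to be mildly careful that the re-centered box is still a translate of $B(L,L^\alpha)$ and that translation invariance of ${\bf P}$ lets us apply Theorem~\ref{BL} at the stopped position; this is routine since $\PR$ is translation invariant in law after using the Markov property.

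The main obstacle, as usual in this setting, is the \emph{lack of uniform ellipticity}: a single high-conductance edge near $X_0$ could in principle make the walk oscillate many times before the "real" excursion, and one must make sure the box argument is not corrupted by such local traps. However, this is exactly what is handled by the $\omega_K$-machinery underlying Theorem~\ref{BL} (traps are sealed off and the modified walk is well-behaved), so I would simply invoke Theorem~\ref{BL} as a black box rather than reprove anything. A secondary subtlety is bookkeeping the order of events — whether the walk first goes up to level $n$ and then backtracks, or first dips down — but both cases are symmetric under the same box estimate, so summing over the (at most polynomially many, in fact $O(1)$ after choosing $L\asymp n$) cases preserves the exponential bound. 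Putting these together yields $\PR[M^1\ge n, D'<\infty]\le Ce^{-cn}$, and dividing by the constant lower bound on $\PR[D'<\infty]$ gives the claimed estimate.
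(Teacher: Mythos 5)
Your reduction to an unconditioned bound (dividing by $\PR[D'<\infty]\geq c>0$) is fine and matches what the paper does implicitly, but the core step has a genuine gap: the event $\{M^1\geq n,\ D'<\infty\}$ does \emph{not} force the walk, stopped at $\Delta_n$ (or at $\Delta_{2L}$), to exit a single re-centered box $B(L,L^{\alpha})$ with $L\asymp n$ through a face other than $\partial^+ B(L,L^{\alpha})$. Nothing in this event bounds $M^1$ from above, so the walk may leave any such box through $\partial^+$ (keep climbing) and only backtrack to $\mathcal{H}^-(0)$ afterwards; with the stopping at $\Delta_{2L}$ the situation is even worse, since on $\{M^1\geq n\}$ the walk in fact must reach level $n$ and so typically \emph{does} exit that box through $\partial^+$. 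Hence the containment that lets you invoke Theorem~\ref{BL} once is false, and a single application of the box estimate cannot produce the exponential cost of a backtrack of depth $n$: that cost only emerges from a decomposition or iteration over scales. The paper's proof handles exactly this by slicing according to the value of $M^1$: on $\{2^k\leq M^1<2^{k+1}\}$ the forward excursion is capped, so after the walk exits $B(2^k,2^{\alpha k})$ through $\partial^+$ (the complementary exit being controlled by Theorem~\ref{BL}), hitting $\mathcal{H}^-(0)$ before $\mathcal{H}^+(2^{k+1})$ \emph{is} contained in a non-$\partial^+$ exit of a re-centered box at scale $2^k$; one then sums the resulting bounds over $k$ with $2^k\geq n$. (An alternative would be a multi-scale chaining as in Lemma~\ref{this_is_boring}, but again not a single box.)

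A secondary issue is the appeal to ``translation invariance at the stopped position.'' After the Markov property at a stopping time the position is random and correlated with the environment, so one cannot directly apply Theorem~\ref{BL} there; the standard route, used in the paper, is to restrict to the event that the walk exits the box through $\partial^+$, do a union bound over the polynomially many points of $\partial^+ B(2^k,2^{\alpha k})$, bound the quenched probability of arriving at a given point by $1$, and only then use translation invariance of ${\bf P}$. If you stop at $\Delta_n$ without first confining the transverse displacement by a box exit, the set of possible positions is infinite and this union bound is unavailable.
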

\begin{pf}
We have
\begin{eqnarray*}
&&\PR\bigl[2^k\leq M< 2^{k+1}\bigr]
\\
&&\qquad\leq  \PR[T_{\partial B(2^k,2^{\alpha k})}\neq T_{\partial
^+B(2^k,2^{\alpha k})}]
\\
&&\qquad\quad{} + \PR\bigl[X_{T_{\partial B(2^k,2^{\alpha k})}} \in\partial^+B\bigl
(2^k,2^{\alpha k}
\bigr),\\
&&\qquad\quad\hspace*{24.5pt} T_{\mathcal{H}^-(0)}^+ \circ\theta_{
T_{\partial
^+B(2^k,2^{\alpha k})}}<T_{\mathcal{H}^+(2^{k+1})}\circ
\theta_{
T_{\partial^+B(2^k,2^{\alpha k})}}\bigr]
\end{eqnarray*}
and by a union bound on the $2^{\alpha k}$ possible positions of $X_{
T_{\partial^+B(2^k,2^{\alpha k})}}$ and using translation invariance arguments,
\[
\PR\bigl[2^k\leq M< 2^{k+1}\bigr] \leq2^{\alpha k}
\PR[T_{\partial B(2^{k+1},2^{\alpha(k+1)})}\neq T_{\partial
^+B(2^{k+1},2^{\alpha(k+1)})}]+e^{-(2^k)}
\]
as a consequence of Theorem \ref{BL}.

Hence, using Theorem \ref{BL}, again
\[
\PR\bigl[2^k\leq M< 2^{k+1}\bigr]\leq c
2^{\alpha k}e^{-c(2^k)}
\]
and since $M<\infty$ on $D<\infty$, we see that
\[
\PR[M\geq n\mid D<\infty] \leq\frac1 {\PR[D<\infty]} \sum
_{k, 2^k\geq
n} \PR\bigl[2^k\leq M< 2^{k+1}\bigr]
\leq C e^{-cn}.
\]
\upqed\end{pf}


Recalling the definition of $M_k$ at (\ref{defrealM}), we introduce
the event
%
\begin{equation}
\label{defSn} S(n)=\bigl\{\mbox{for $i$ with $S_i
\leq\Delta_n$ and $M_i<\infty$, } M_i-X_{S_i}
\cdot\vec{\ell}\leq n^{1/2}\bigr\}.
\end{equation}

Let us prove the following lemma:
%
\begin{lemma}
\label{Sn}
We have
\[
\PR\bigl[S(n)^c\bigr] \leq\exp\bigl(-n^{1/2}\bigr).
\]
\end{lemma}
\begin{pf}
By (\ref{rightdir}), we know that $\operatorname{card}\{ i; S_i\leq\Delta
_n\}
\leq n$. So, we see that
%
\begin{eqnarray}
\label{cdeb}
\qquad\PR\bigl[S(n)^c\bigr] & \leq&\PR[T_{B(n,n^{\alpha})}\neq
T_{\partial^+
B(n,n^{\alpha})}]
\nonumber\\[-8pt]\\[-8pt]
&&{} +\sum_{i\leq n}\sum
_{x\in B(n,n^\alpha)}\PR\bigl[M_i-X_{S_i}\cdot\vec{
\ell}> n^{1/2}, M_i<\infty, X_{S_i}=x\bigr],\nonumber
\end{eqnarray}
where the first term can be controlled by Theorem \ref{BL}.

Now, we may see that
%
\begin{eqnarray}
\label{ca} &&\PR\bigl[M_i-X_{S_i}\cdot\vec{\ell}>
n^{1/2}, M_i<\infty, X_{S_i}=x\bigr]
\nonumber\\[-8pt]\\[-8pt]
&&\qquad\leq  \PR\Bigl[\sup_{n\leq D\circ\theta_{S_i}+S_i} (X_n-x)\cdot
\vec{\ell} > n^{1/2}, M_i<\infty, X_{S_i}=x
\Bigr].\nonumber
\end{eqnarray}


If $M_i<\infty$, we have $D\circ\theta_{S_i}+S_i <\infty$, hence
%
\begin{eqnarray}
\label{cc}\quad && \PR\Bigl[\sup_{n\leq D\circ\theta_{S_i}+S_i} (X_n-x)\cdot
\vec
{\ell} \geq n^{1/2}, M_i<\infty, X_{S_i}=x \Bigr]
\nonumber\\[-8pt]\\[-8pt]
&&\qquad\leq \PR\Bigl[ \sup_{n\leq D\circ\theta_{S_i}+S_i} (X_n-x)\cdot
\vec{\ell} \geq n^{1/2}, D\circ\theta_{S_i}+S_i
<\infty, X_{S_i}=x \Bigr].\nonumber
\end{eqnarray}

Recalling that $X_{S_i}$ is a maximum in the direction $\vec{\ell}$ of
the past trajectory, we can use Markov's property at the time $S_i$ to
see that
%
\begin{eqnarray}
\label{cd} && \PR\Bigl[ \sup_{n\leq D\circ\theta_{S_i}+S_i} (X_n-x)\cdot
\vec{\ell} \geq n^{1/2}, D\circ\theta_{S_i}+S_i
<\infty, X_{S_i}=x \Bigr]
\nonumber\\
&&\qquad\leq  {\mathbf E} \Bigl[P^{\omega}_x \Bigl[\sup
_{n\leq D} X_n\cdot\vec{\ell} \geq n^{1/2},
D<\infty\Bigr] \Bigr]
\\
&&\qquad\leq  \PR\Bigl[\sup_{n\leq D} X_n\cdot
\vec{\ell} \geq n^{1/2}\mid D<\infty\Bigr] \leq C\exp
\bigl(-cn^{1/2}\bigr),\nonumber
\end{eqnarray}
where we used translation invariance and Lemma \ref{M1}. The result
follows from putting together (\ref{cdeb}), (\ref{ca}), (\ref{cc})
and (\ref{cd}).
\end{pf}

\subsection{Exponential tails for backtracking}

As a result of Theorem \ref{BL}, we know that the walk is exponentially
unlikely to backtrack a lot. This can be seen as follows, and given a
large $n$, it is extremely likely to exit $B(2^n,2^{\alpha n})$ through
the positive side. Centering at that exit point a box of size
$B(2^{n+1},\break2^{\alpha(n+1)})$, we are again very likely to exit through
the positive side. Applying this reasoning recursively, we see that we
are unlikely to reach $\mathcal{H}^-(-2^n)$.
%
\begin{lemma}
\label{thisisboring}
We have for any $n$,
\[
\PR[T_{\mathcal{H}^-(-n)}<\infty]\leq C\exp(-cn).
\]
\end{lemma}
\begin{pf}
Fix $n>0$. For this proof, we will use the event
\[
A(n)= \{T_{\partial B(2^n,2^{n\alpha})}= T_{ \partial^+
B(2^n,2^{n\alpha
})}\}.
\]

For any $k\geq n$, let us denote $B_X(2^{k+1},2^{(k+1)\alpha}) = \{
z\in\Z^d, z=X_{T_{\partial B(2^{k},2^{k\alpha})}} +y$ with
$y\in B(2^{k+1},2^{(k+1)\alpha})\}$, and we introduce the event
\[
C(k)=\{T_{\partial^+B_X(2^{k+1},2^{(k+1)\alpha})}\circ\theta
_{T_{\partial B(2^k,2^{k\alpha})}} =T_{\partial
B_X(2^{k+1},2^{(k+1)\alpha})}\circ
\theta_{T_{\partial
B(2^k,2^{k\alpha
})}}\}.
\]

A simple induction shows that on $ \bigcap_{k\in[n,m]} C(k)\cap A(n)$, we
have\break $\{T_{\mathcal{H}^-(-2^n-1)} \geq T_{B(2^m,m2^{\alpha m})}\}$, and
hence we see that
%
\begin{equation}
\label{aa} \bigcap_{k\geq n} C(k)\cap A(n) \subseteq\{
T_{\mathcal
{H}^-(-2^n-1)}=\infty\}.
\end{equation}

Denote for $m> n$,
\[
D(n,m)= C(k)\cap C(m)^c\cap\biggl(\bigcap
_{n\leq k<m} A(n) \biggr),
\]
so that using (\ref{aa}), we see
\[
\{T_{\mathcal{H}^-(-2^n)}<\infty\}\subset\biggl(\bigcap_{k\geq n}
C(k)\cap A(n) \biggr)^c \subset\bigcup_{m\geq n}
D(n,m) \cup A(n)^c,
\]
which implies
%
\begin{eqnarray}
\label{ab} \PR[T_{\mathcal{H}^-(-2^n)}<\infty] &\leq&\PR\bigl[A(n)^c\bigr]+
\sum_{m\geq n} \PR\bigl[D(n,m)\bigr]\nonumber\\[-8pt]\\[-8pt]
&\leq&\exp
\bigl(-c2^n\bigr)+ \sum_{m\geq n} \PR
\bigl[D(n,m)\bigr]\nonumber
\end{eqnarray}
by Theorem \ref{BL}.

We may notice that on $D(n,m)$, we have $\{T_{\partial
B(2^m,m2^{m\alpha
})}=T_{\partial^+ B(2^m,m2^{m\alpha})}\}$ [note that is different from
$A(m)$]. Hence, when using Markov's property at $T_{\partial
B(2^m,m2^{m\alpha})}$ the random walk is located in $\partial^+
B(2^m,m2^{m\alpha})$, so
\begin{eqnarray*}
&& \PR\bigl[D(n,m)\bigr]
\\
&&\qquad\leq  \sum_{x\in\partial^+ B(2^m,m2^{m\alpha})} {\mathbf E}\bigl[P^{\omega
}[X_{ T_{\partial B(2^{m},m2^{m\alpha})}}=x]\\
&&\qquad\quad\hspace*{75.4pt}{}\times P^{\omega}_x[
T_{x+\partial
B(2^{m+1},2^{(m+1)\alpha})}\neq T_{x+\partial
^+B(2^{m+1},2^{(m+1)\alpha
})}]\bigr]
\\
&&\qquad\leq  C m^d2^{dm\alpha} \\
&&\qquad\quad\hspace*{0pt}{}\times\max_{x\in\partial^+ B(2^{m},m2^{m\alpha})}
{\mathbf E}
\bigl[P^{\omega}_x[ T_{x+\partial B(2^{m+1},2^{(m+1)\alpha})}\neq
T_{x+\partial^+B(2^{m+1},2^{(m+1)\alpha})}]
\bigr]
\\
&&\qquad\leq C m^d2^{dm\alpha} \PR\bigl[ A(m+1)^c\bigr] \leq
C\exp\bigl(-c2^m\bigr)
\end{eqnarray*}
by translation invariance and Theorem \ref{BL}.

The lemma follows from the previous and (\ref{ab}).
\end{pf}

\subsection{Uniformly bounded chance of never backtracking at open points}

We recall that $\nu$ was defined at the beginning of Section \ref
{sectnotation}. We denote $\mathcal{C}=\{x> 0\}^{\nu}$, which is seen
as the configuration, that is, values of conductances, adjacent to a
point. For any $a\in\mathcal{C}$, we define the environment $\omega
_x^a$ on the edges of $\Z^d$ to be the environment which has the same
conductances as in $\omega$ except on edges adjacent to $x$, and on
these edges the conductances are given by the configuration $a$, that
is, $c^{\omega_x^a}_*([x,x+e])=a(e)$ for any $e\in\nu$.

We say that $a\in\mathcal{C}$ is $K$-open if $a(e)\in[1/K,K]$ for any
$e\in\nu$. In the sequel, we will use the notation $\max_{a\in
\mathcal
{C}\ \mathrm{open}}$ to designate the maximum taken over all
configurations $a\in\mathcal{C}$ that are open.

\begin{lemma}
\label{posescape}
We have
\[
{\mathbf E} \Bigl[ \max_{a\in\mathcal{C}\ \mathrm{open}}P^{\omega
^a_0}[D<\infty] \mid0
\mbox{ is good} \Bigr] <1.
\]
\end{lemma}

This result is natural. Indeed, by following the directed open path we
can bring the random walk far in the direction of the bias with a
positive probability independent of the environment, and after this
point it will be unlikely by Lemma \ref{thisisboring} to backtrack
past your starting point. This means that there is always a positive
escape probability from a good point.

\begin{pf}
Fix $n>0$. On the event that $\{0\mbox{ is good}\}$, we denote
$\mathcal
{P}(i)$ a directed path starting at $0$ where all points are open. We
denote $L_{\partial^+B(n,n^{\alpha})}=\inf\{ i, \mathcal{P}(i)\in
\partial^+B(n,n^{\alpha})\}$. Now, we see that if the two following
conditions are verified:
\begin{longlist}[(2)]
\item[(1)] $X_i=\mathcal{P}(i)$ for $i\leq L_{\partial
^+B(n,n^{\alpha})}$,
\item[(2)] $T_{\mathcal{H}^-(2)}\circ\theta_{T_{\mathcal
{P}(L_{\partial
^+B(n,n^{\alpha})})}} =\infty$,
\end{longlist}
then $D=\infty$.

We can see that if $\{0\mbox{ is good}\}$, then $L_{\partial
^+B(n,n^{\alpha})}\leq Cn$, so that
\[
\min_{a\in\mathcal{C}\ \mathrm{open}} P^{\omega_0^a}\bigl[X_i=
\mathcal{P}(i) \mbox{ for }i\leq L_{\partial^+B(n,n^{\alpha})}\bigr]
\geq\kappa_0^{Cn}
\]
by Remark \ref{fakeUE}.

In particular, we have
\begin{eqnarray*}
&& {\mathbf E} \Bigl[ \min_{a\in\mathcal{C}\ \mathrm{open}} P^{\omega
_0^a}[D=\infty]\mid0
\mbox{ is good} \Bigr]
\\
&&\qquad\geq  {\mathbf E} \Bigl[\min_{a\in\mathcal{C}\ \mathrm{open}} P^{\omega
_0^a}
\bigl[X_i=\mathcal{P}(i) \mbox{ for }i\leq L_{\partial
^+B(n,n^{\alpha
})}\bigr]
\\
&&\hspace*{78.3pt}{} \times P^{\omega_0^a}_{\mathcal{P}(L_{\partial^+B(n,n^{\alpha})})}[
T_{\mathcal{H}^-(2)} =\infty]\mid0\mbox{
is good} \Bigr]
\\
&&\qquad\geq  \kappa_0^{Cn} {\mathbf E} \Bigl[ \min
_{a\in\mathcal{C}\ \mathrm{open}} P^{\omega_0^a}_{\mathcal{P}(L_{\partial
^+B(n,n^{\alpha})})}[ T_{\mathcal
{H}^-(2)} =
\infty]\mid0\mbox{ is good} \Bigr].
\end{eqnarray*}

Moreover, since we will not use the edges adjacent to $0$ to know if we
hit $\mathcal{H}^-(2)$, we see that
\[
P^{\omega_0^a}_{\mathcal{P}(L_{\partial^+B(n,n^{\alpha})})}[ T_{\mathcal
{H}^-(2)} =\infty]=P^{\omega}_{\mathcal{P}(L_{\partial
^+B(n,n^{\alpha
})})}[
T_{\mathcal{H}^-(2)} =\infty],
\]
so that for any $n$
\begin{eqnarray*}
&& {\mathbf E} \Bigl[ \min_{a\in\mathcal{C}\ \mathrm{open}} P^{\omega
_0^a}[D=\infty]\mid0
\mbox{ is good} \Bigr]
\\
&&\qquad\geq  \kappa_0^{Cn} {\mathbf E} \bigl[P^{\omega}_{\mathcal
{P}(L_{\partial
^+B(n,n^{\alpha})})}[
T_{\mathcal{H}^-(2)} =\infty]\mid0\mbox{ is good} \bigr].
\end{eqnarray*}

Now,
\begin{eqnarray*}
&& {\mathbf E} \bigl[ P^{\omega}_{\mathcal{P}(L_{\partial^+B(n,n^{\alpha
})})}[ T_{\mathcal{H}^-(2)} <\infty]
\mid\mbox{0 is good} \bigr]
\\
&&\qquad\leq {\mathbf P}[\mbox{0 is good}]^{-1} {\mathbf E} \bigl[
P^{\omega
}_{\mathcal
{P}(L_{\partial^+B(n,n^{\alpha})})}[ T_{\mathcal{H}^-(2)} <\infty]
\bigr]
\\
&&\qquad\leq  C \PR[T_{\mathcal{H}^-(-n+2)} <\infty],
\end{eqnarray*}
where we use translation invariance.

Now, by Lemma \ref{thisisboring}, we see that the previous quantity
is less than $1/2$ for $n\geq n_0$. Hence combining the last two equations,
\[
{\mathbf E} \Bigl[ \min_{a\in\mathcal{C}\ \mathrm{open}} P^{\omega
_0^a}[D=\infty]\mid0
\mbox{ is good} \Bigr] \geq(1/2) \kappa_0^{Cn_0} >0,
\]
which implies the result.
\end{pf}

\subsection{Number of trials before finding an open ladder point which
is a regeneration time}

Let us introduce the collection of edges with maximum scalar product
with $\vec{\ell}$
\[
\mathcal{E}=\{ \mbox{$e\in\nu$ such that $e\cdot\vec{\ell} =e_1
\cdot\vec{\ell}$}\}
\]
and
%
\begin{equation}
\label{defbx}\qquad \mathcal{B}_x=\bigl\{e\in\mathbb{E}
\bigl(\Z^d\bigr), e=[-e_1,f-e_1] \mbox{ with
$f$ any unit vector of $\mathcal{E}$}\bigr\}.
\end{equation}

Imagining the bias is oriented to the right, the set of edges to the
``left'' of $x$ is defined to be
%
\begin{equation}
\label{defleft} \mathcal{L}^x:=\bigl\{[y,z]\in E
\bigl(\Z^d\bigr), y\cdot\ell\leq x \cdot\ell\mbox{ and } z\cdot\ell
\leq x \cdot\ell\bigr\}\cup\mathcal{B}_x,
\end{equation}
and the edges to the ``right'' are
%
\begin{equation}
\label{defright} \mathcal{R}^x:=\bigl\{[y,z]\in E
\bigl(\Z^d\bigr), y\cdot\ell> x \cdot\ell\mbox{ or } z\cdot\ell> x
\cdot\ell\bigr\}\cup\mathcal{B}_x.
\end{equation}

We recall that $N$ was defined at (\ref{deftau1}). Since each time we
arrive at a new $S_k$, we are at an open-ladder point with unspoiled
environment ahead of us, there will be a positive chance of being a
good point and never backtracking, which would create a regeneration
time. This means that $N$ should be similar to a geometric random
variables. Here, we will only prove that $N$ has exponential tails.
%
\begin{lemma}
\label{Kn}
We have
\[
\PR[N\geq n]\leq\exp(-cn).
\]
\end{lemma}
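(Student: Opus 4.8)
\textbf{Plan of proof for Lemma~\ref{Kn}.}
The plan is to show that at each step $k$ of the construction~(\ref{def_S})--(\ref{def_tau1}), conditionally on the past, there is a probability bounded below by some $c>0$ that $M_k=\infty$, i.e. that $S_k$ is a regeneration time, so that $N$ is stochastically dominated by a geometric random variable with parameter $c$. First I would unwind the definitions: by Lemma~\ref{tail_M} (applied with any $M>1$), each $S_{k+1}$ is finite $\PR$-a.s.\ whenever $S_k$ is, and by construction $X_{S_k}$ is a $K$-open point which is a strict new maximum of the walk in the direction $\vec\ell$. Thus it suffices to prove the one-step lower bound
\[
\PR[M_k=\infty \mid \mathcal{G}_k]\geq c>0,
\]
for a suitable filtration $\mathcal{G}_k$ encoding the trajectory and the environment explored up to time $S_k$, and then iterate.

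The key point for the one-step bound is that, although $X_{S_k}$ is a conditioned vertex (it is reached and is a maximum), the conditioning on the environment is confined to the edges \emph{to the left} of $X_{S_k}$, in the sense of~(\ref{def_left}); the edges in $\mathcal{R}^{X_{S_k}}$, in particular those adjacent to $X_{S_k}$ that point strictly forward, are essentially unconstrained — this is exactly the discussion following Figure~4 and the reason the sets $\mathcal{L}^x,\mathcal{R}^x,\mathcal{B}_x$ were introduced. So I would first estimate the $\PR$-probability that $X_{S_k}$ is \emph{open-good} in the sense of the previous subsection: since being open-good depends only on the status (normal/abnormal) of edges which, up to the finitely many edges adjacent to $X_{S_k}$, lie to the right of $X_{S_k}$ and are unexplored, and since a vertex is $K$-good with probability close to $1$ for $K$ large, a Harris/FKG-type argument gives $\PR[X_{S_k}\text{ is open-good}\mid \mathcal{G}_k]\geq c>0$ uniformly in $k$. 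On that event, I would apply Lemma~\ref{pos_escape}: using the Markov property at $S_k$ and the fact that resampling the conductances adjacent to $X_{S_k}$ to an arbitrary open configuration $a$ only changes the law through $\omega_x^a$, the conditional probability that $D'\circ\theta_{S_k}=\infty$ — which on $\{X_{S_k}\text{ good}\}$ is exactly $\{M_k=\infty\}$ — is bounded below by $1-{\bf E}[\max_{a\text{ open}}P^{\omega_0^a}[D'<\infty]\mid 0\text{ open good}]>0$. Combining, $\PR[M_k=\infty\mid\mathcal{G}_k]\geq c>0$.

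Then I would conclude by iteration: writing $\{N>n\}\subseteq\bigcap_{k=1}^{n}\{S_k<\infty,\ M_k<\infty\}$ and peeling off one factor at a time with the one-step bound (together with the a.s.\ finiteness of the $S_k$ from Lemma~\ref{tail_M}), one gets $\PR[N\geq n]\leq (1-c)^{\,n-1}\leq\exp(-cn)$ after adjusting constants.

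\textbf{Main obstacle.} The delicate part is making the conditional independence rigorous: one must specify precisely the $\sigma$-field $\mathcal{G}_k$ generated by $(X_i)_{i\le S_k}$ together with the conductances of all edges having an endpoint in $\mathcal{H}^-(M_{k-1})$ (plus the controlled extra edges $K_{\mathrm{good}}(X_{S_j})$ and the edges swept before each backtrack), and check that $\{X_{S_k}\text{ is open-good}\}$ and the escape event $\{D'\circ\theta_{S_k}=\infty\}$ depend on the environment only through edges in $\mathcal{R}^{X_{S_k}}$ (modulo the adjacent edges at $X_{S_k}$, which are handled by the resampling $\omega_x^a$ built into Lemma~\ref{pos_escape}) — so that the estimates of Lemma~\ref{pos_escape} and the Harris inequality may legitimately be applied conditionally. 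The level $M_k$ is itself random and depends on the future, so some care is needed to see that the part of the environment revealed by time $S_k$ really lies in $\mathcal{L}^{X_{S_k}}$; this is where the inequality~(\ref{right_dir}) and the definitions of $R_k$, $M^1$, $M^2$ (controlled by Lemmas~\ref{M1} and~\ref{M2}) do the bookkeeping. This is the standard but somewhat technical ``regeneration works'' argument of~\cite{SZ}, adapted here to the absence of uniform ellipticity via the open-point machinery.
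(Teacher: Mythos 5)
Your proposal is correct and follows essentially the same route as the paper: the paper also iterates a uniform one-step estimate, decomposing at $x=X_{S_{k+1}}$ and using that the explored part (walk and conductances in $\mathcal{L}^x$) is independent of the events $\{x \text{ is open good}\}$ and $\max_{a\text{ open}}P^{\omega_x^a}_x[D'<\infty]$, which are handled by Lemma~\ref{pos_escape}, yielding $\PR[N\geq n]\leq (1-c)^n$. The only cosmetic difference is that the paper works with the annealed decomposition over $x$ and the $\sigma$-fields generated by $c_*(e)$, $e\in\mathcal{L}^x$ versus $e\notin\mathcal{L}^x$ (no FKG needed, the open-good event being exactly independent of the explored edges), rather than a conditional bound given a filtration $\mathcal{G}_k$.
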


\begin{pf}
We introduce the event
\[
C(n):=\{\mbox{for all }k \leq n\mbox{ such that }S_k<\infty\mbox{,
we have } D\circ S_k+S_k<\infty\},
\]
which verifies
%
\begin{equation}
\label{fa} \{N\geq n\}\subseteq C(n).
\end{equation}

Because of the way our regeneration times are constructed, we can see
that $C(n)$ is $P^{\omega}$-measurable with respect to $\sigma\{X_k
\mbox{ with } k \leq S_{n+1}\}$; see the discussion below~(\ref
{deftau1}). Using Markov's property at $S_{n+1}$,
\begin{eqnarray*}
\PR\bigl[C(n+1)\bigr] &\leq& \sum_{x\in\Z^d} {\mathbf E}
\bigl[P^{\omega}\bigl[X_{S_{n+1}}=x,C(n)\bigr] P^{\omega}_x[D<
\infty]\bigr]
\\
&\leq& \sum_{x\in\Z^d} {\mathbf E}\Bigl[P^{\omega}
\bigl[X_{S_{n+1}}=x, C(n)\bigr] \max_{a\ \mathrm{open}}
P^{\omega^{x,a}}_x[D<\infty]\Bigr],
\end{eqnarray*}
where we used the fact that $X_{S_{n+1}}$ is open. Furthermore:
\begin{longlist}[(2)]
\item[(1)] $\!\!P^{\omega}[X_{S_{n+1}}\,{=}\,x, C(n)] $ is measurable with
respect to \mbox{$\sigma\{c_*(e) \mbox{ with }e\,{\in}\,\mathcal{L}_{x}\}$},
\item[(2)] $\!\!\max_{a\ \mathrm{open}} P^{\omega_x^a}_x[D\,{<}\,\infty]$ is
measurable with respect to \mbox{$\sigma\{c_*(e) \mbox{ with }
e\,{\notin}\,\mathcal{L}_{x}\}$}.
\end{longlist}

So we have ${\mathbf P}$-independence between the random variables in $(1)$
and in $(2)$. Hence
\begin{eqnarray*}
&& \PR\bigl[C(n+1)\bigr]
\\
&&\qquad\leq  \PR\bigl[C(n)\bigr] {\mathbf P} \Bigl[\max_{a\ \mathrm{open}}
P^{\omega
^{x,a}}_x[D<\infty] \Bigr]
\\
&&\qquad\leq  \PR\bigl[C(n)\bigr]\Bigl( {\mathbf P}[x\mbox{ is not good}]+{\mathbf E}
\Bigl[{
\mathbf1} {\{x\mbox{ is good}\}} \max_{a\in\mathcal{C}\
\mathrm{open}}
P^{\omega
^a_x}[D<\infty] \Bigr]\Bigr)
\\
&&\qquad\leq  \PR\bigl[C(n)\bigr] \Bigl(1-{\mathbf P}[0\mbox{ is good}] \Bigl
(1-{\mathbf E}
\Bigl[\max_{a\in\mathcal{C}\ \mathrm{open}} P^{\omega^a_0}[D<\infty]\mid
0\mbox{ is
good} \Bigr] \Bigr) \Bigr),
\end{eqnarray*}
where we used translation invariance. Furthermore, we can use
Lemma \ref
{posescape} to see that
\[
\PR\bigl[C(n+1)\bigr]\leq(1-c) \PR\bigl[C(n)\bigr]\leq\cdots
\leq(1-c)^n,
\]
hence, the result by (\ref{fa}).
\end{pf}

\subsection{Tails of regeneration times}

We have all the tools necessary to show that the first regeneration
time does not occur too far away from the origin.

\begin{theorem}
\label{tailtau}
For any $M<\infty$, there exists $K_0$ such that, for any $K\geq K_0$
we have $\tau_1^{(K)}< \infty$, $\PR$-a.s. and
\[
\PR[X_{\tau_1^{(K)}}\cdot\vec{\ell} \geq n] \leq C(M)n^{-M}.
\]
\end{theorem}

\begin{pf}
Recalling definitions (\ref{defS}) and (\ref{defW}), we may see that
$\{T_{\mathcal{H}^+(M_k)},\break k\geq0\} \subset\{W_k, k\geq0\}$. This
means that on $M(n)$, defined at (\ref{defM}),
\[
\mbox{for $k$ such that $S_{k} \leq\Delta_n$}\qquad \mbox{we have }
X_{S_{k+1}}\cdot\vec{\ell} -X_{T_{\mathcal{H}^+(M_k)}}\cdot\vec{\ell
}\leq
n^{1/2}.
\]

Moreover, on $S(n)$ [defined at (\ref{defSn})], we have
\[
\mbox{for $k$ such that $S_k\leq\Delta_n$ and
$M_k<\infty$}\qquad \mbox{we have } M_k-X_{S_k}\cdot\vec{
\ell}\leq n^{1/2}.
\]

Noticing that $X_{T_{\mathcal{H}^+(M_k)}}\cdot\vec{\ell} \leq M_k+1$,
we may see that, on $S(n)\cap M(n)$
\[
X_{S_{k+1}}\cdot\vec{\ell}-X_{S_k}\cdot\vec{\ell}
\leq2n^{1/2}+1
\]
for any $k$ with $S_{k}<\Delta_n$ and $M_k<\infty$. By induction, this
means that if $k\leq n^{1/2}/3$, $S_{k}<\Delta_n$ and $M_k<\infty$, then
\[
X_{S_{k+1}}\cdot\vec{\ell} \leq k\bigl(2n^{1/2}+1\bigr)<n
\quad\mbox{and}\quad S_{k+1}\leq\Delta_n,
\]
and the second part following from the fact that $X_{S_{k+1}}$ is a new
maximum for the random walk in the direction $\vec{\ell}$. In
particular, if $N\leq n^{1/2}/3$, then we can apply\vspace*{1pt} the previous
equation to $k=N-1$. Recalling (\ref{deftau1}) we see that, if $\{N
\leq n^{1/2}/3\}$ and $M(n)\cap S(n)$, then for $n$ large enough,
\[
X_{\tau_1}\cdot\vec{\ell} \leq\bigl(n^{1/2}/3\bigr)
\bigl(2n^{1/2}+1\bigr) < n.
\]

Thus
\begin{eqnarray*}
\PR[X_{\tau_1}\cdot\vec{\ell} \geq n] &\leq& \PR\bigl[N \geq
n^{1/2}/3\bigr] +\PR\bigl[M(n)^c\bigr] +\PR
\bigl[S(n)^c\bigr]
\\
&\leq& \exp\bigl(-cn^{1/2}\bigr) + 2n^{-M}
\leq3n^{-M}
\end{eqnarray*}
by Lemmas \ref{Sn}, \ref{Mn} and \ref{Kn}. This completes
the proof.
\end{pf}

\subsection{Fundamental property of regeneration times}\label{fundprop}

We are going to define the sequence $\tau_0:=0<\tau_1<\tau_2< \cdots
<\tau_k< \cdots$ of successive regeneration times. Using a slight abuse
of notation by viewing $\tau_k(\cdot,\cdot)$ as a function of a walk
and an environment, we can define previous sequence via the following procedure:
%
\begin{equation}
\label{regenstruct} \tau_{k+1}=\tau_1+
\tau_k\bigl(X_{\tau_1+ \cdot}-X_{\tau_1}, \omega( \cdot
+X_{\tau_1})\bigr),\qquad k\geq0,
\end{equation}
meaning that the $k+1$th regeneration time is the $k$th regeneration
time after the first one.

We set
\[
\mathcal{G}_{k}:=\sigma\bigl\{ \tau_1,\ldots,
\tau_k; (X_{\tau_k\wedge
m})_{m\geq0}; c_*(e) \mbox{ with } e
\in\mathcal{L}^{X_{\tau
_{k}}}\bigr\}.
\]

Let us introduce for any $x\in\Z^d$,
\[
a_x=\bigl(c_*\bigl([x-e_1,x-e_1+e]\bigr)
\bigr)_{e\in\mathcal{E}} =\bigl(c_*(e)\bigr)_{e\in\mathcal
{B}_x}\in[1/K,K]^{\mathcal{E}},
\]
recalling notation from (\ref{defbx}). For $a\in[1/K,K]^{\mathcal
{E}}$, we set
\[
{\mathbf P}_x^a =\delta_a \bigl(\bigl(c_*
\bigl([x-e_1,x-e_1+e]\bigr)\bigr)_{e\in\mathcal{E}} \bigr)
\otimes\int_{e\in E(\Z^d)\setminus\mathcal{B}_x} \otimes\, d {\mathbf P}\bigl
(c_*(e)\bigr),
\]
where $\otimes$ denotes the product of measures. We introduce the
associated annealed measure
\[
\PR_x^a ={\mathbf P}_x^a \times
P_x^{\omega}.
\]

In words, $\PR_x^a$ denotes the annealed measure for the walk started
at $x$ but where the conductances of the edges in $\mathcal{B}_x$ are
fixed and given by $a$. We will use the notation $\PR^a$ (resp., ${\mathbf
P}^a$) for $\PR_0^a$ (resp., ${\mathbf P}_0^a$).

We may notice that Theorem \ref{tailtau}, can easily be generalized
to become:
%
\begin{theorem}
\label{tailtau2}
For any $M<\infty$, there exists $K_0$ such that, for any $K\geq K_0$,
we have $\tau_1^{(K)}< \infty$, $\PR_0^a$-a.s. for $a\in
[1/K,K]^{\mathcal{E}}$ and
\[
\max_{a\in[1/K,K]^{\mathcal{E}}} \PR_0^a[X_{\tau_1^{(K)}}
\cdot\vec{\ell} \geq n] \leq Cn^{-M}.
\]
\end{theorem}

Similarly, we can turn Theorem \ref{BL} into:
%
\begin{theorem}
\label{BL2}
For $\alpha>d+3$,
\[
\max_{a\in[1/K,K]^{\mathcal{E}}} \PR_0^a[T_{\partial B(L,L^{\alpha
})}
\neq T_{ \partial^+ B(L,L^{\alpha})}] \leq C e^{-cL}.
\]
\end{theorem}

The fundamental properties of regeneration times are that:
\begin{longlist}[(2)]
\item[(1)] the past and the future of the random walk that has arrived
$X_{\tau_k}$ are only linked by the conductances of the edges in
$a_{X_{\tau_k}}$;
\item[(2)] the law of the future of the random walk has the same law as
a random walk under $\PR_0^{a_{X_{\tau_k}}}[ \cdot\mid D=\infty]$.
\end{longlist}

We recall that $\mathcal{R}^0$ was defined at (\ref{defright}). Let us
state a theorem corresponding to the previous heuristic.
%
\begin{theorem}
\label{thindep}
Let us fix $K$ large enough. Let $f$, $g$, $h_k$ be bounded functions
which are measurable with respect to $\sigma\{X_n\dvtx  n\geq0\}$, $\sigma
\{
c_*(e),e\in\mathcal{R}^0\}$ and $\mathcal{G}_k$, respectively. Then
for $a\in[1/K,K]^{\mathcal{E}}$,
\[
\ES^a\bigl[f(X_{\tau_k+\cdot}-X_{\tau_k})g\circ
t_{X_{\tau_k}}h_k\bigr]=\ES^a\bigl[h_k
\ES_0^{a_{X_{\tau_k}}}[fg\mid D=\infty]\bigr].
\]
\end{theorem}

A similar theorem was proved in \cite{Shen} (as Theorems 3.3 and 3.5).
In our context, the random variables studied ($\tau_1$, $D$ etc.) are
defined differently from the corresponding ones in \cite{Shen}.
Nevertheless, our notation was chosen so that we may prove Theorem \ref
{thindep} simply by following word for word the proofs of Theorems 3.3
and 3.5 in \cite{Shen}. The reader should start reading after Remark
3.2 in \cite{Shen} to have all necessary notation. To avoid any
possible confusion we point out that in~\cite{Shen}, the measures $\PR
$, $P_{x,\omega}$ and $P$ correspond, respectively, to the environment,
the quenched random walk and the annealed measure and that $\omega(b)$
denotes the conductances of an edge $b$.

We bring the reader's attention to the fact that we have not proved yet
that $\tau_k<\infty$, $\PR$-a.s. (or $\PR^a$-a.s. for any $a\in
[1/K,K]^{\mathcal{E}}$). We only know this for $k=1$. This is enough to
prove Theorem \ref{thindep} for $k=1$. Using Theorem \ref{thindep}
for $k=1$ and Theorem \ref{tailtau2}, we may show that $\tau
_2<\infty
$, $\PR$-a.s. (or $\PR^a$-a.s. for any $a\in[1/K,K]^{\mathcal{E}}$) and
thereafter obtain Theorem \ref{thindep} for $k=2$. Hence, we may
proceed by induction to prove Theorem \ref{thindep} alongside the
following result.

\begin{proposition}\label{taufinite}
Let us fix $K$ large enough. For any $k\geq1$, we have $\tau
_k^{(K)}<\infty$ $\PR$-a.s. (or $\PR^a$-a.s. for any $a\in
[1/K,K]^{\mathcal{E}}$).
\end{proposition}

We see that this implies Proposition \ref{dirtrans}, which states
directional transience in the direction $\vec{\ell}$ for the random walk.

As in \cite{Shen}, we may notice that a consequence of Theorem \ref
{thindep} is:
%
\begin{proposition}\label{prop}
Let
\[
\Gamma:= \N\times\Z^d \times[1/K,K]^{\mathcal{E}}
\]
with its canonical product $\sigma$-algebra, and let
$y_i=(j^i,z^i,a^i)\in\Gamma$, $i\geq0$. For $a\in[1/K,K]^{\mathcal
{E}}$ and $G\subset\Gamma$ measurable let also
\[
\tilde{R}_K(a; G):=\PR_0^a\bigl[\bigl(
\tau_1^{(K)},X_{\tau
_1^{(K)}},a_{X_{\tau
_1^{(K)}}}\bigr)\in G
\mid D=\infty\bigr].
\]
Then under $\PR$ the $\Gamma$-valued random variables (with $\tau_0=0$),
%
\begin{equation}
\label{defyk} Y_i^K:=(J_i,Z_i,A_i):=
\bigl(\tau_{i+1}^{(K)}-\tau_i^{(K)},X_{\tau
_{i+1}^{(K)}}-X_{\tau_i^{(K)}},a_{\tau_{i+1}^{(K)}}
\bigr),\qquad i\geq0,\hspace*{-28pt}
\end{equation}
define a Markov chain on the state space $\Gamma$, which has
transition kernel
\[
\PR[Y_{i+1}\in G \mid Y_0=y_0,\ldots,Y_i=y_i]=\tilde{R}_K
\bigl(a^i;G\bigr)
\]
and initial distribution
\[
\tilde{\Lambda}_K(G):=\PR\bigl[\bigl(\tau_1^{(K)},X_{\tau
_1^{(K)}},a_{X_{\tau
_1^{(K)}}}
\bigr)\in G\bigr].
\]

Similarly, on the state space $[1/K,K]^{\mathcal{E}}$, the random variables
%
\begin{equation}
\label{defA} A_i=a_{X_{\tau_{i+1}^{(K)}}},\qquad k\geq0,
\end{equation}
also define a Markov chain under $\PR$. With $a\in[1/K,K]^{\mathcal
{E}}$ and $B\subset[1/K,K]^{\mathcal{E}}$ measurable, its transition
kernel is
\[
R_K(a;B):=\PR_0^a[a_{X_{\tau_1^{(K)}}}\in B
\mid D=\infty]= \sum_{j\in
\N,z\in\Z^d} \tilde{R}_K
\bigl(a;(j,z,B)\bigr)
\]
and the initial distribution is
%
\begin{equation}
\label{deflambda} \Lambda_K(B):=
\PR[a_{X_{\tau_1^{(K)}}}\in B]= \tilde{\Lambda}_K\bigl((j,z,B)\bigr).
\end{equation}
\end{proposition}

Now let us quote Lemma 3.7 and Theorem 3.8 from \cite{Shen}, which
essentially states that the environment seen at regeneration times
converges exponentially fast to some measure $\nu_K$.
%
\begin{theorem}\label{thnu}
There exists a unique invariant distribution $\nu_K$ for the transition
kernel $R_K$. It verifies
\[
\sup_{a\in[1/K,K]^{\mathcal{E}}} \bigl\| R_K^m(a;
\cdot)-\nu_K(\cdot)\bigr\|_{\mathrm{var}} \leq
Ce^{-cm},\qquad m\geq0,
\]
where $\|\cdot\|_{
\mathrm{var}}$ denotes the total variation distance.

Further, this probability measure $\nu_K$ is invariant with respect to
the transition kernel $R$; that is, $\nu_K R_K= \nu_K$, and the Markov
chain $(A_k)_{k\geq0}$, defined in (\ref{defA}) with transition
kernel $R_K$ and initial distribution $\nu_K$ on the state space
$[1/K,K]^{\mathcal{E}}$ is ergodic.
Moreover, the initial distribution $\Lambda_K(\cdot)$ given in (\ref
{deflambda}) is absolutely continuous with respect to $\nu_K(\cdot)$.
\end{theorem}

\begin{theorem}\label{thnu2}
The distribution $\tilde{\nu}_K:=\nu_K \tilde{R}_K$ is the unique
invariant distribution for the transition kernel $\tilde{R}_K$. It verifies
\[
\sup_{a\in[1/M,M]^{\mathcal{E}}} \bigl\|\tilde
{R}_K^m(a; \cdot)-\tilde{\nu}_K(\cdot)
\bigr\|_{\mathrm{var}} \leq Ce^{-cm},\qquad m\geq0.
\]

With initial distribution equal $\tilde{\nu}_K$, the Markov chain
$(Y_k)_{k\geq0}$ defined in (\ref{defyk}) is ergodic. Moreover, the
law of the Markov chain $(Y_{k+1})_{k\geq0}$ under $\PR$ is absolutely
continuous with respect to the law of the chain with initial
distribution $\tilde{\nu}_K$.
\end{theorem}

The proofs in \cite{Shen} carry over to our context simply, once we
have shown the following Doeblin condition: there exists $c>0$ such
that for any $a\in[1/K,K]^{\mathcal{E}}$ and $B\subset
[1/K,K]^{\mathcal
{E}}$, we have
\[
R_K(a,B)\geq c \otimes_{\mathcal{E}}{\mathbf P}
\bigl[c_*\in B\mid c_* \in[1/K,K]\bigr].
\]

Before proceeding to the proof of this condition we need to introduce
one notation. A vertex $x\in\Z^d$ is called open-good if $x$ is good
in a configuration $\omega_x^a$ where $a$ is open (the corresponding
notations were introduced above Lemma \ref{posescape}). Note that we
do not need to specify the value of the conductances of the edges since
the event $\{x \mbox{ is good}\}$ is measurable with respect to the
event $\{y \mbox{ is open}\}$ for $y\in\Z^d$.

To prove this condition, we only need to describe an event where
$a_{X_{\tau_1^{(K)}}}\in B$ and $D=\infty$ which has a $\PR
^a$-probability comparable to $\otimes_{\mathcal{E}}{\mathbf P}[c_*\in
B\mid c_* \in[1/K,K]]$. The event we will consider is $X_1=e_1$,
$X_2=2e_1$ (with $e_1$ and $2e_1$ being open) and regenerating at time
$2$. In terms of equations this translates into
\begin{eqnarray*}
R_K(a;B) & = & \PR_0^a[a_{X_{\tau_1}}\in
B \mid D=\infty]
\\
& = &\frac1{\PR_0^a[D=\infty] } {\mathbf E}^a
\bigl[P_0^{\omega}[a_{X_{\tau
_1}}\in B, D=\infty]\bigr].
\end{eqnarray*}

The previous equation implies, using Remark \ref{fakeUE},
\begin{eqnarray*}
R_K(a;B) &\geq& c{\mathbf E}^a\bigl[P_0^{\omega}[X_1=e_1, X_2=2e_1, D
\circ\theta_2=\infty]\ldots
\\
&&\hspace*{22.8pt}  a_{X_2}\in B, 0\mbox{ and }e_1 \mbox{ are open
and } 2e_1 \mbox{ is good}\bigr]
\\
&\geq&  c \kappa_0^2{\mathbf E}^a\bigl[
P_{2e_1}^{\omega}[D=\infty],a_{2e_1}\in B, 0\mbox{ and }
e_1 \mbox{ is open and } 2e_1 \mbox{ is good}\bigr]
\\
&\geq&  c \kappa_0^2{\mathbf E}^a \Bigl[ \min
_{a\ \mathrm{open}}P_{2e_1}^{\omega
_0^a}[D=\infty],a_{2e_1}
\in B, 0,e_1, 2e_1 \mbox{ are open}
\\
&&\hspace*{152.5pt} \mbox{ and } 2e_1 \mbox{ is open-good} \Bigr]
\end{eqnarray*}
and seeing that:
\begin{longlist}[(2)]
\item[(1)] $ \min_{a\ \mathrm{open}}P_{2e_1}^{\omega_0^a}[D=\infty]$ and
$\{2e_1 \mbox{ is open-good}\}$ are measurable with respect to $\sigma
\{
c_*([y,z])$  with $(y-2e_1)\cdot\vec{\ell} >0$ and
$z\neq 2e_1\}$;
\item[(2)] $\{a_{2e_1}\in B, 0, e_1, 2e_1 \mbox{ are open}\}$ is
measurable with respect to $\sigma\{c_*([y,z])$ with
$(y-e_1)\cdot\vec{\ell} \leq0$ or $2e_1=z\}$, and
\end{longlist}
hence they are ${\mathbf P}$-independent so that
\begin{eqnarray*}
R_K(a;B) & \geq & c{\mathbf E}^a \Bigl[ \min
_{a\ \mathrm{open}}P_{2e_1}^{\omega
_0^a}[D=\infty],
2e_1 \mbox{ is open-good} \Bigr]
\\
&&{} \times{\mathbf P}_0^a[ a_{2e_1}\in B, 0,
e_1, 2e_1 \mbox{ are open}]
\\
&\geq& c{\mathbf P}_0^a[ a_{2e_1}\in B; 0,
e_1, 2e_1 \mbox{ are open}]
\end{eqnarray*}
by Lemma \ref{posescape} (since $\{2e_1 \mbox{ is good}\}\subset\{
2e_1 \mbox{ is open-good}\}$). Now by simple combinatorics we see that
\[
R_K(a;B) \geq c \otimes_{\mathcal{E}}{\mathbf P}
\bigl[c_*\in B\mid c_* \in[1/K,K]\bigr],
\]
which is the Doeblin condition we were looking for.

\section{Positive speed regime}
\label{sectposspeed}

Our aim for this section is to show:
%
\begin{theorem}
\label{tauu1}
If $E_*[c_*]<\infty$, we have
\[
\max_{a\in[1/K,K]^{\mathcal{E}}}\ES^a\bigl[\Delta_n{
\mathbf1} {\{0\in\mathrm{GOOD}_K\}}\bigr]\leq C(K)n
\]
for any $K\geq K_0$ for some $K_0$.
\end{theorem}

Used in combination with the existence of a law of large numbers
provided by the existence of a regeneration structure, this will allow
us to prove the positivity of the speed if $E_*[c_*]<\infty$. Let us
enumerate the key points for proving the previous result:
\begin{longlist}[(3)]
\item[(1)] The number of visits to a good point is bounded; see
Lemma \ref{backbone}. This limits the expected number of entries in a
trap to, roughly, the size of its border.
\item[(2)] The time spent during one visit to a trap is linked to the
size of the trap and the conductances in that trap; see Lemma \ref
{timetrap}. It is already known by Lemma \ref{BLsizeclosedbox} that
the size of traps is extremely small, so we will be able to neglect the
effect from the size of traps.
\item[(3)] The conductances in traps are, relatively, similar to usual
conductances. In particular, they do not have infinite expectation and
cannot force zero-speed; see Lemma \ref{condtrap}.
\end{longlist}

This reasoning allows us to say that, essentially, $\Delta_n$ should be
of the same order as the number of sites visited before $\Delta_n$
(i.e., of order $n$), since there is no local trapping. More precisely,
we get an upper-bound of $\ES[\Delta_n]$ in terms of the number of the
probability of reaching a point during the first regeneration time; see
Lemma~\ref{lkjh}. The last step of the proof is to estimate the
probability that we reach $x$ during the first regeneration time; see
Lemma~\ref{hittrap}.

We proceed to give the details associated with the previous outline.
First, we notice:
%
\begin{lemma}
\label{backbone}
For any $x\in\mathrm{GOOD}_K(\omega)$, we have
\[
E^{\omega}_x \Biggl[\sum_{i=0}^{\infty}
{\mathbf1} {\{X_i=x\}} \Biggr]\leq C(K)<\infty.
\]
\end{lemma}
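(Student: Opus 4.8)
The statement asserts that for $x\in\text{GOOD}_K(\omega)$, the expected number of visits to $x$ under $P_x^\omega$ is bounded by a constant $C(K)$ depending only on $K$ (and $d$), uniformly in the environment $\omega$ and in the point $x$. The natural tool is the connection with electrical networks: for a reversible Markov chain, the Green's function at a starting point satisfies
\[
E_x^\omega\Bigl[\sum_{i=0}^\infty \1{X_i=x}\Bigr]=\pi^\omega(x)\,R^\omega_{\text{eff}}(x,\infty),
\]
where $R^\omega_{\text{eff}}(x,\infty)$ is the effective resistance from $x$ to infinity in the network $\omega$ (equivalently, $G^\omega(x,x)=(\pi^\omega(x)P_x^\omega[T_x^+=\infty])^{-1}$ and $P_x^\omega[T_x^+=\infty]=(\pi^\omega(x)R^\omega_{\text{eff}}(x,\infty))^{-1}$). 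So it suffices to bound $\pi^\omega(x)R^\omega_{\text{eff}}(x,\infty)$ from above, uniformly.

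\textbf{Key steps.} First I would fix $x\in\text{GOOD}_K$ and use the defining property of a good vertex: there is an infinite directed $K$-open path $x=x_0,x_1,x_2,\dots$ with all edges $K$-normal and with the ``direction'' constraint that forces $x_{2i+1}-x_{2i}=e_1$ to be followed by a step not in $\{-e_1,\dots,-e_d\}$, so that the path is transient in the direction $\vec\ell$ and in particular $x_n\cdot\vec\ell\to\infty$. Second, I would bound $R^\omega_{\text{eff}}(x,\infty)$ by the resistance of this single path (Rayleigh monotonicity: deleting all other edges only increases resistance), which gives
\[
R^\omega_{\text{eff}}(x,\infty)\le \sum_{i\ge 0}\frac{1}{c^\omega([x_i,x_{i+1}])}.
\]
Third, I would estimate each term using~(\ref{def_conduct}): since the edge $[x_i,x_{i+1}]$ is $K$-normal, $c_*^\omega([x_i,x_{i+1}])\in[1/K,K]$, so $c^\omega([x_i,x_{i+1}])\ge K^{-1}e^{(x_{i+1}+x_i)\cdot\ell}$, hence $1/c^\omega([x_i,x_{i+1}])\le K e^{-(x_{i+1}+x_i)\cdot\ell}$. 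The direction constraint guarantees that $x_i\cdot\vec\ell$ is nondecreasing and increases by at least $1/\sqrt d$ every two steps (an $e_1$ step must be followed by a step with nonnegative $\vec\ell$-component, and over each pair of steps the net $\vec\ell$-displacement is at least that of an $e_1$ step alone, $\ge 1/\sqrt d$; one needs to check the few cases of what $x_{2i+2}-x_{2i+1}$ can be). Therefore $(x_{i+1}+x_i)\cdot\ell\ge 2\lambda(x\cdot\vec\ell)+c'i$ for a constant $c'=c'(\lambda,d)>0$, and the sum is a convergent geometric series:
\[
R^\omega_{\text{eff}}(x,\infty)\le K\,e^{-2\lambda x\cdot\vec\ell}\sum_{i\ge 0}e^{-c'i}=C(K,\lambda,d)\,e^{-2\lambda x\cdot\vec\ell}.
\]
Fourth, I would combine this with the upper bound on $\pi^\omega(x)$. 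Here one must be slightly careful: $x$ being good means its edges are $K$-normal, so $\pi^\omega(x)=\sum_{y\sim x}c^\omega(x,y)\le 2d\cdot K e^{2\lambda x\cdot\vec\ell}$ by~(\ref{def_conduct}) again (using $(x+y)\cdot\ell\le 2\lambda x\cdot\vec\ell+\lambda\cdot$const, absorbed into the constant). Multiplying, $\pi^\omega(x)R^\omega_{\text{eff}}(x,\infty)\le C(K)$ with the $e^{\pm2\lambda x\cdot\vec\ell}$ factors cancelling, which is exactly the claim.

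\textbf{Main obstacle.} The only real subtlety is the geometric-decay step: making precise that along the directed $K$-open path the quantity $(x_{i+1}+x_i)\cdot\ell$ grows linearly in $i$. This requires checking, from the definition of a directed open path, that consecutive steps cannot cancel the $\vec\ell$-progress — an $e_1$-step contributes $\ge\lambda/\sqrt d$ and is never immediately undone, and every other allowed step has nonnegative $\vec\ell$-component — so over any window of two steps the $\vec\ell$-displacement is bounded below by a positive constant. Once that is in hand, everything else is the standard Green's-function/effective-resistance identity together with the already-available bounds of the type in Remark~\ref{fake_UE} (applied here directly in $\omega$ rather than $\omega_K$, which is legitimate since $x$ and its escape path are $K$-normal). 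I would also remark that one does not even need $x\in\text{GOOD}_K$ in the global sense; a single directed $K$-open escape path from $x$ suffices, which is precisely what ``$x$ is $K$-good'' provides.
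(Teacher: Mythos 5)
Your proof is correct and is essentially the paper's own argument: the paper likewise writes the Green's function as $\pi^{\omega}(x)/C^{\omega}(x\leftrightarrow\infty)$, bounds $\pi^{\omega}(x)$ by $CKe^{2\lambda x\cdot\vec{\ell}}$ since a good point is open, and lower-bounds the effective conductance by Rayleigh's monotonicity principle applied to the infinite directed $K$-open path, whose resistances decay geometrically in the direction of the bias. The only difference is that you spell out the linear growth of $x_i\cdot\vec{\ell}$ along the directed path, which the paper leaves implicit.
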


\begin{pf}
We see that
\[
E^{\omega}_x \Biggl[\sum_{i=0}^{\infty}
{\mathbf1} {\{X_i=x\}} \Biggr]=\frac1 {P^{\omega
}_x
\bigl[T_x^+=\infty\bigr]}=\frac{\pi^{\omega}(x)}{C^{\omega
}(x\leftrightarrow
\infty)},
\]
where $C^{\omega}(x\leftrightarrow\infty)$ is the effective
conductance between $x$ and infinity in $\omega$. Since $x\in
\mathrm{GOOD}_K$, we can upper-bound $\pi^{\omega}(x)$ using Remark \ref
{fakeUE}, and we may use Rayleigh's monotonicity principle (see \cite
{LP}) to see that
\[
C^{\omega}(x\leftrightarrow\infty) \geq\frac cK \sum
_{i\geq0} c^{\omega}(p_i) \geq c \exp(2\lambda
x \cdot\vec{\ell}),
\]
where $(p_i)_{i\geq0}$ is a directed path of open points starting at
$x$. This yields the result.
\end{pf}

\subsection{Time spent in traps}

For $x \in\partial\mathrm{BAD}(\omega) $, we define $
\mathrm{BAD}^{\mathrm{s}}_x(K)=\{x\}\cup\bigcup_{y\sim x} \mathrm{BAD}_K(y)$ the union of all
bad areas adjacent to $x$. Following Lemma \ref{BLsizeclosedbox} is:
%
\begin{lemma}\label{tgb}
For $x \in\partial\mathrm{BAD}(\omega) $, we have that $
\mathrm{BAD}^{\mathrm{s}}_x(K)$ is finite ${\mathbf P}$-a.s. for $K$ large enough and
\[
{\mathbf P}_p\bigl[ W\bigl(\mathrm{BAD}^{\mathrm{s}}_x(K)
\bigr) \geq n\bigr] \leq C\exp\bigl(-\xi_1(K)n\bigr),
\]
where $\xi_1(K)\to\infty$ as $K$ tends to infinity.
\end{lemma}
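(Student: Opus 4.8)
The plan is to deduce Lemma~\ref{tgb} directly from Lemma~\ref{BL_size_closed_box} together with a deterministic containment of $\text{BAD}^{\text{s}}_x(K)$ inside the bad cluster of a single vertex. The key observation is that $\text{BAD}^{\text{s}}_x(K)=\{x\}\cup\bigcup_{y\sim x}\text{BAD}_K(y)$, and since $x\in\partial\text{BAD}(\omega)$, at least one neighbour $y_0$ of $x$ is bad. Every bad neighbour $y$ of $x$ is $2$-connected to $y_0$ (through $x$, since $\|y-y_0\|_1=2$), so all the bad neighbours of $x$ lie in a single even-bad-connected region; in fact all the clusters $\text{BAD}_K(y)$ over bad neighbours $y\sim x$ are contained in $\text{BAD}_K^e(y_0)$ enlarged by its neighbourhood, which is $\text{BAD}_K(y_0)$. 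Hence $\text{BAD}^{\text{s}}_x(K)\subseteq \{x\}\cup \text{BAD}_K(y_0)\cup(\text{neighbours of these})$, and in particular $W(\text{BAD}^{\text{s}}_x(K))\leq W(\text{BAD}_K(y_0))+C$ for a universal constant $C$ (the $+C$ absorbing the passage from $y_0$ to $x$ and the one-step enlargement).

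First I would make precise the containment: if $y\sim x$ is bad, then $\text{BAD}_K(y)$ and $\text{BAD}_K(y_0)$ are both connected components of bad vertices, and $y,y_0$ both being bad with a common bad-reachability through the even lattice forces $\text{BAD}_K(y)=\text{BAD}_K(y_0)$ whenever $y$ and $y_0$ are in the same bad component; for bad neighbours in different parity classes one uses that $x$ links them. The cleanest route is to observe that all bad vertices among $\{x\}\cup\{y:y\sim x\}$, together with whatever each drags along, form a connected set of bad vertices (connected via $x$ if $x$ itself is bad, or pairwise $2$-connected through $x$), so there is a single $z_0$ (any bad vertex adjacent to $x$, or $x$ itself) with $\text{BAD}^{\text{s}}_x(K)\subseteq \text{BAD}_K(z_0)\cup\{w: d_{\Z^d}(w,\text{BAD}_K(z_0))\leq 2\}$. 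This gives the width bound $W(\text{BAD}^{\text{s}}_x(K))\leq W(\text{BAD}_K(z_0))+4$.

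Then I would conclude by the tail estimate. By translation invariance it suffices to bound, for $x$ fixed, the probability conditioned on (or intersected with) the finitely many possibilities for which neighbour $z_0$ of $x$ carries the bad cluster. A union bound over the at most $2d$ choices of $z_0$ gives
\[
{\bf P}_p[W(\text{BAD}^{\text{s}}_x(K))\geq n]\leq \sum_{z_0\sim x}{\bf P}_p[W(\text{BAD}_K(z_0))\geq n-4]+{\bf P}_p[W(\text{BAD}_K(x))\geq n-4],
\]
and each term is at most $C\exp(-\xi_1(K)(n-4))\leq C'\exp(-\xi_1(K)n)$ by Lemma~\ref{BL_size_closed_box}, with $\xi_1(K)\to\infty$; finiteness ${\bf P}$-a.s.\ follows since each $\text{BAD}_K(z_0)$ is finite ${\bf P}$-a.s.

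The main obstacle here is purely bookkeeping: verifying that the union over neighbours $y\sim x$ of $\text{BAD}_K(y)$ really is contained in a single bad cluster plus a bounded neighbourhood, rather than being a genuinely larger object. This is where one must be slightly careful about parity — two bad neighbours of $x$ might lie in different $\Z^d$-bad-components that are nonetheless joined through $x$ — but since $x\in\partial\text{BAD}$ means $x$ is adjacent to \emph{some} bad vertex, and all we need is a width bound (not a claim that $\text{BAD}^{\text{s}}_x(K)$ is itself a cluster), the union is controlled by picking any one bad vertex $z_0\sim x$ and noting every other bad vertex in $\{x\}\cup(x+\nu)$ is within graph distance $2$ of $\text{BAD}_K(z_0)$, hence either already in it or dragging along only a bounded-diameter extra piece. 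Once this deterministic inclusion is in place, the probabilistic part is immediate from Lemma~\ref{BL_size_closed_box}.
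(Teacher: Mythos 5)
Your overall plan — reduce to Lemma~\ref{BL_size_closed_box} by relating $\text{BAD}^{\text{s}}_x(K)$ to the bad clusters of the neighbours of $x$ and then apply a union bound — is the right (and essentially the paper's intended) route, since the paper offers no separate proof beyond citing Lemma~\ref{BL_size_closed_box}. But your key deterministic step is false. A point $x\in\partial\text{BAD}(\omega)$ is a \emph{good} vertex adjacent to the bad region (cf.\ the proof of Lemma~\ref{time_trap}, where $x\in\partial\text{BAD}(\omega)\subset\text{GOOD}(\omega)$ is used), so two bad neighbours $y_1,y_2$ of $x$ are in general \emph{not} in the same nearest-neighbour bad component: they are $2$-connected through the good vertex $x$, but $\text{BAD}_K(\cdot)$ is defined via nearest-neighbour connections among bad vertices, not $2$-connections. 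You conflate the even ($2$-connected) cluster $\text{BAD}_K^e$ with the nearest-neighbour cluster $\text{BAD}_K$: the even cluster enlarged by its neighbourhood contains, but is generally much larger than, $\text{BAD}_K(y_0)$. Consequently the inclusion $\text{BAD}^{\text{s}}_x(K)\subseteq \text{BAD}_K(z_0)\cup\{w:\,d_{\Z^d}(w,\text{BAD}_K(z_0))\leq 2\}$ and the bound $W(\text{BAD}^{\text{s}}_x(K))\leq W(\text{BAD}_K(z_0))+4$ fail: take $x$ good with $\text{BAD}_K(x+e_1)$ a long bad segment to the right and $\text{BAD}_K(x-e_1)$ a long bad segment to the left; then $W(\text{BAD}^{\text{s}}_x(K))$ is about the sum of the two widths, and your displayed union bound with thresholds $n-4$ does not follow (the event $\{W(\text{BAD}^{\text{s}}_x(K))\geq n\}$ is not contained in $\bigcup_{z_0}\{W(\text{BAD}_K(z_0))\geq n-4\}$). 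The closing remark that the other neighbours "drag along only a bounded-diameter extra piece" is exactly where this breaks: the extra piece is the entire bad cluster of that neighbour, which is a.s.\ finite but of unbounded (random, exponentially-tailed) diameter.

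The repair is immediate and worth recording. Since $\text{BAD}^{\text{s}}_x(K)=\{x\}\cup\bigcup_{y\sim x}\text{BAD}_K(y)$ and every nonempty cluster in this union contains a neighbour of $x$, if $W(\text{BAD}^{\text{s}}_x(K))\geq n$ then for some coordinate $e_i$ the extremal points lie in (at most two) clusters each containing a neighbour of $x$, whence at least one neighbour $y\sim x$ satisfies $W(\text{BAD}_K(y))\geq (n-2)/2$. A union bound over the $2d$ neighbours and Lemma~\ref{BL_size_closed_box} then give ${\bf P}[W(\text{BAD}^{\text{s}}_x(K))\geq n]\leq 2dC\exp\bigl(-\xi_1(K)(n-2)/2\bigr)$, which is of the required form with a rate still diverging as $K\to\infty$; a.s.\ finiteness follows since each $\text{BAD}_K(y)$ is a.s.\ finite. (Alternatively, your first-paragraph observation can be salvaged: all bad neighbours of $x$ do lie in one \emph{even} bad cluster, and the proof of Lemma~\ref{BL_size_closed_box} in fact bounds the tail of $W(\text{BAD}_K^e)$ — but that statement is not what Lemma~\ref{BL_size_closed_box} asserts, so if you go that way you must invoke the interior of its proof rather than its statement.)
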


By a slightly subtle use of the mean return time formula we are able to
obtain the following:
%
\begin{lemma}
\label{timetrap}
For any $x \in\partial\mathrm{BAD}(\omega) $ we have that
\[
E_x^{\omega}\bigl[T^+_{\mathrm{GOOD}(\omega)}\bigr]\leq C(K) \exp
\bigl(3\lambda\bigl\llvert\partial\mathrm{BAD}^{\mathrm{s}}_x(
\omega)\bigr\rrvert\bigr) \biggl(1+\sum_{e\in
E(\mathrm{BAD}^{\mathrm{s}}_x)}
c_*^{\omega}(e) \biggr).
\]
\end{lemma}

\begin{pf}
The first remark to be made is that since $x \in\partial
\mathrm{BAD}(\omega) \subset\mathrm{GOOD}(\omega) $, all $y\sim x $, then
$c_*([x,y])\in[1/K,K]$.

We introduce the notation $\mathrm{BAD}^{\mathrm{ss}}_x(K)=
\mathrm{BAD}^{\mathrm{s}}_x(K)\setminus\{x\}$.

Let us consider the finite network obtained by taking $
\mathrm{BAD}^{\mathrm{ss}}_x(\omega)\cup\partial\mathrm{BAD}^{\mathrm{ss}}_x(\omega)$ and merging all points of $ \partial
\mathrm{BAD}^{\mathrm{ss}}_x(\omega)$ (which contains $x$) to one point $\delta$. We denote
$\omega_{\delta}$ the resulting graph which is obviously finite by
Lemma \ref{tgb} and connected, since the different connected components
$\mathrm{BAD}_K(y)$ ($x\sim y$) are connected through $x$. We may apply
the mean return formula at $\delta$ (see \cite{LP} exercise 2.33) to
obtain that
\[
E_{\delta}^{\omega_{\delta}}\bigl[T_{\delta}^+\bigr]=2
\frac{\sum_{e\in
E(\omega
_{\delta})} c(e)}{\pi^{\omega_{\delta}}(\delta)}=2\frac{\sum_{e\in
E(\mathrm{BAD}^{\mathrm{ss}}_x)} c(e) +\pi^{\omega_{\delta}}(\delta
)}{\pi
^{\omega_{\delta}}(\delta)},
\]
where $\pi^{\omega_{\delta}}(\delta)=\sum_{e\in\partial_E \operatorname{BAD}^{\mathrm{ss}}_x(K)} c^{\omega}(e)$.

We know that $\delta$ was formed by merging only good (hence open)
points. Hence, for $y$ a neighbor of $\delta$ in $\omega_{\delta}$, we
have, by Remark \ref{fakeUE},
%
\begin{equation}
\label{condborder}\qquad c \exp\Bigl(2 \lambda\min
_{y\in\partial\mathrm{BAD}^{\mathrm
{ss}}_x(\delta
)}y \cdot\vec{\ell} \Bigr) \leq c^{\omega_{\delta}}\bigl([
\delta,y]\bigr)\leq C \exp\Bigl(2 \lambda\max_{y\in\partial\mathrm
{BAD}^{\mathrm
{ss}}_x(\delta
)}y \cdot
\vec{\ell} \Bigr),
\end{equation}
so that
we know that
%
\begin{eqnarray}
\label{measborder}
&&
c \exp\Bigl(2 \lambda\max
_{y\in\partial\mathrm{BAD}^{\mathrm
{ss}}_x(\delta
)} y\cdot\vec{\ell} \Bigr)\nonumber\\[-8pt]\\[-8pt]
&&\qquad\leq\pi^{\omega_{\delta}}(\delta)
\leq C\bigl\llvert\partial\mathrm{BAD}^{\mathrm{ss}}_x(\omega)\bigr
\rrvert\exp\Bigl(2 \lambda\max_{y\in\partial\mathrm{BAD}^{\mathrm
{ss}}_x(\delta)} y\cdot\vec{\ell}
\Bigr).\nonumber
\end{eqnarray}

Using (\ref{measborder}), we get
\[
\mbox{for $e\in E\bigl(\mathrm{BAD}^{\mathrm{ss}}_x\bigr)$}\qquad
\frac{c(e)}{\pi
^{\omega
_{\delta}}(\delta)} \leq C c_*(e),
\]
which means that
%
\begin{equation}
\label{meanreturn} E_{\delta}^{\omega_{\delta}}
\bigl[T_{\delta}^+\bigr]\leq C \sum_{e\in
E(\mathrm{BAD}^{\mathrm{ss}}_x)}
c_*(e)+C.
\end{equation}

The transition probabilities of the random walk in $\omega_{\delta}$ at
any point different from $\delta$ are the same as that of the walk in
$\omega$. This implies that
%
\begin{eqnarray}
\label{meantime1}\quad E_{\delta}^{\omega_{\delta}}
\bigl[T_{\delta}^+\bigr]&=&\sum_{y\sim\delta}
P^{\omega_{\delta}}_{\delta}[X_1=y]E_y^{\omega}[T_{\partial
\mathrm{BAD}^{\mathrm{ss}}_x}]
\nonumber\\
&=&\sum_{y\in\mathrm{BAD}^{\mathrm{ss}}_x, y\sim\partial
\mathrm{BAD}^{\mathrm
{ss}}_x} P^{\omega_{\delta}}_{\delta}[X_1=y]E_y^{\omega
}[T_{\partial
\mathrm{BAD}^{\mathrm{ss}}_x}]\nonumber\\[-8pt]\\[-8pt]
&\geq&\max_{y\in\mathrm{BAD}^{\mathrm{ss}}_x, y\sim\partial\mathrm
{BAD}^{\mathrm{ss}}_x} P^{\omega_{\delta}}_{\delta}[X_1=y]E_y^{\omega
}[T_{\partial\mathrm{BAD}^{\mathrm{ss}}_x}]\nonumber
\\
&\geq&\min_{y\in\mathrm{BAD}^{\mathrm{ss}}_x, y\sim\partial\mathrm
{BAD}^{\mathrm{ss}}_x} P^{\omega_{\delta}}_{\delta}[X_1=y]
\max_{y\in
\mathrm{BAD}^{\mathrm{ss}}_x, y\sim\partial\mathrm{BAD}^{\mathrm
{ss}}_x}E_y^{\omega}[T_{\partial\mathrm{BAD}^{\mathrm{ss}}_x}].\nonumber
\end{eqnarray}

Moreover by (\ref{condborder}) and (\ref{measborder}), we have
\[
P_{\delta}^{\omega_{\delta}}[X_1=y] =\frac{c^{\omega_{\delta
}}([\delta,y])}{\pi^{\omega_{\delta}}(\delta)}\geq c
\frac{ \exp(2 \lambda
\min_{y\in\partial\mathrm{BAD}^{\mathrm{ss}}_x}y\cdot\vec{\ell})}{ \llvert
\partial\mathrm{BAD}^{\mathrm{ss}}_x(\omega)\rrvert \exp(2
\lambda\max_{y\in
\partial\mathrm{BAD}^{\mathrm{ss}}_x} y\cdot\vec{\ell})},
\]
and, since $\mathrm{BAD}^{\mathrm{ss}}_x\cup\partial\mathrm{BAD}^{\mathrm
{ss}}_x$ is connected, we have
\[
\max_{y\in\partial\mathrm{BAD}^{\mathrm{ss}}_x(\delta)} y \cdot\vec{\ell
}-\min_{y\in\partial\mathrm{BAD}^{\mathrm{ss}}_x}
y \cdot\vec{\ell}\leq\bigl\llvert\partial\mathrm{BAD}^{\mathrm{ss}}_x(
\omega)\bigr\rrvert,
\]
so that
\[
\min_{y\in\mathrm{BAD}^{\mathrm{ss}}_x, y\sim\partial\mathrm
{BAD}^{\mathrm
{ss}}_x} P_{\delta}^{\omega_{\delta}}[X_1=y]
\geq c \bigl\llvert\partial\mathrm{BAD}^{\mathrm{ss}}_x(\omega)
\bigr\rrvert^{-1} \exp\bigl(-2\lambda\bigl\llvert\partial\mathrm
{BAD}^{\mathrm{ss}}_x(\omega)\bigr\rrvert\bigr).
\]

This, with (\ref{meanreturn}) and (\ref{meantime1}), and considering
the fact that $\partial\mathrm{BAD}^{\mathrm{ss}}_x\subset\mathrm{GOOD}$ yields
\[
\max_{y\in\mathrm{BAD}^{\mathrm{ss}}_x, y\sim\partial\mathrm
{BAD}^{\mathrm
{ss}}_x} E_y^{\omega}[T_{\mathrm{GOOD}(\omega)}]
\leq C \exp\bigl(3\lambda\bigl\llvert\partial\mathrm{BAD}^{\mathrm
{ss}}_x(
\omega)\bigr\rrvert\bigr) \biggl(1+\sum_{e\in E(\mathrm
{BAD}^{\mathrm{ss}}_x)}
c_*(e) \biggr).
\]

This implies that
\[
E_x^{\omega}\bigl[T_{\mathrm{GOOD}(\omega)}^+\bigr]\leq1+C \exp\bigl(3
\lambda\bigl\llvert\partial\mathrm{BAD}^{\mathrm{s}}_x(\omega)\bigr
\rrvert\bigr) \biggl(1+\sum_{e\in
E(\mathrm
{BAD}^{\mathrm{s}}_x)} c_*(e) \biggr).
\]
\upqed\end{pf}

\subsection{Conductances in traps}

Let us understand, partially, how the conductances in traps are conditioned.
%
\begin{lemma}
\label{condtrap}
Fix $a\in[1/K,K]^{\mathcal{E}}$. Take $n\geq0$ and $K\geq1$,
$F\subset E(\Z^d)$ such that $0\notin V(F)$, and $e \in F$. If
$E_*[c_*]<\infty$, then
\begin{eqnarray*}
&& {\mathbf E}^a\bigl[{\mathbf1} {\bigl\{E\bigl(\mathrm{BAD}^{\mathrm{s}}_x(K)
\bigr)=F\bigr\}} P^{\omega}[T_{V(F)} \leq\Delta_n]
c_*(e) \bigr]
\\
&&\qquad\leq  C(K) {\mathbf E}^a\bigl[{\mathbf1} {\bigl\{E\bigl(
\mathrm{BAD}^{\mathrm{s}}_x(K)\bigr)=F\bigr\} } P^{\omega
}[T_{V(F)}
\leq\Delta_n] \bigr].
\end{eqnarray*}

If $\lim\frac{\ln P_*[c_*>n]}{\ln n}=-\gamma$ with $\gamma<1$, then
for any $\epsilon>0$ we have
\[
{\mathbf E}^a\bigl[{\mathbf1} {\bigl\{E\bigl(\mathrm{BAD}^{\mathrm{s}}_x(K)
\bigr)=F\bigr\}} c_*(e)^{\gamma
-\epsilon} \bigr] \leq C(K){\mathbf E}^a
\bigl[{\mathbf1} {\bigl\{E\bigl(\mathrm{BAD}^{\mathrm{s}}_x(K)\bigr)=F
\bigr\}} \bigr].
\]
\end{lemma}


Let us recall that abnormal edges were defined in the beginning of
Section \ref{subsecbad}. We introduce the notation $\omega^{e \mathrm{
an}}$ to signify that for $e'\in E(\Z^d)\setminus\{e\}$,
\[
{\mathbf1} {\bigl\{e' \mbox{ is abnormal}\bigr\}}\bigl(
\omega^{e \mathrm{
an}}\bigr)={\mathbf1} {\bigl\{e' \mbox{ is
abnormal}\bigr\}}(\omega)
\]
and
\[
{\mathbf1} {\{e \mbox{ is abnormal}\}}\bigl(\omega^{e \mathrm{ an}}\bigr)=1.
\]

\begin{pf}
First, let us notice that if there exists $M$ such that ${\mathbf
P}^a[c_*<M]=1$, then we may obtain the first part of the lemma with
$C=M$. We will now assume that ${\mathbf P}^a[c_*>M]>0$ for any $M$.

Take $F\subset E(\Z^d)$ such that $x\in F$ and $0\notin F$. For any $e
\in F$, let us\break notice that on the event $\{c_*(e)>K\}$, $e$ is abnormal
which means that\break $E(\mathrm{BAD}^{\mathrm{s}}_x(\omega^{e\ \mathrm{
an}}))=E(\mathrm{BAD}^{\mathrm{s}}_x(\omega))$. This means that
%
\begin{eqnarray}
\label{ss1} && {\mathbf E}^a\bigl[{\mathbf1} {\bigl\{E\bigl(
\mathrm{BAD}^{\mathrm{s}}_x\bigr)=F\bigr\}} P^{\omega}[T_{V(F)}
\leq\Delta_n] c_*(e) \bigr]
\nonumber\\
&&\qquad\leq  K{\mathbf E}^a\bigl[{\mathbf1} {\bigl\{E\bigl(
\mathrm{BAD}^{\mathrm{s}}_x\bigr)=F\bigr\}} P^{\omega}[T_{V(F)}
\leq\Delta_n] \bigr]\nonumber
\\
&&\qquad\quad{} + {\mathbf E}^a\bigl[c_*(e) {\mathbf1} {\bigl\{ c_*(e)> K
\bigr\}} {\mathbf1} {\bigl\{E\bigl(\mathrm{BAD}^{\mathrm{s}}_x\bigr)=F
\bigr\}} P^{\omega
}[T_{V(F)} \leq\Delta_n]\bigr]\nonumber
\\
&&\qquad= K{\mathbf E}^a\bigl[{\mathbf1} {\bigl\{E\bigl(
\mathrm{BAD}^{\mathrm{s}}_x\bigr)=F\bigr\}} P^{\omega}[T_{V(F)}
\leq\Delta_n] \bigr]
\\
&&\qquad\quad{} + {\mathbf E}^a\bigl[c_*(e){\mathbf1} {\bigl\{c_*(e)> K\bigr
\}} {\mathbf1} {\bigl\{E\bigl(\mathrm{BAD}^{\mathrm{s}}_x\bigl(
\omega^{e\ \mathrm{
an}}\bigr)\bigr)=F\bigr\}}P^{\omega
}[T_{V(F)}
\leq\Delta_n]\bigr]\nonumber
\\
&&\qquad\leq  K{\mathbf E}^a\bigl[{\mathbf1} {\bigl\{E\bigl(
\mathrm{BAD}^{\mathrm{s}}_x\bigr)=F\bigr\}} P^{\omega}[T_{V(F)}
\leq\Delta_n] \bigr]\nonumber
\\
&&\qquad\quad{} + {\mathbf E}^a\bigl[c_*(e){\mathbf1} {\bigl\{ E\bigl(\mathrm
{BAD}^{\mathrm{s}}_x\bigl(\omega^{e\ \mathrm{an}}\bigr)\bigr)=F
\bigr\}}P^{\omega
}[T_{V(F)} \leq\Delta_n]\bigr].\nonumber
\end{eqnarray}

Using the fact that $c_*(e)$ is independent of $\{E(\mathrm{BAD}^{\mathrm
{s}}_x(\omega^{e\ \mathrm{an}}))=F\}$ and $P^{\omega}[T_{V(F)} \leq
\Delta
_n]$ since $0\notin V(F)$ and $e\in F$. Hence
%
\begin{eqnarray}
\label{ss2} && {\mathbf E}^a\bigl[c_*(e) {\mathbf1} {\bigl\{E\bigl(
\mathrm{BAD}^{\mathrm{s}}_x\bigl(\omega^{e\
\mathrm{an}}\bigr)\bigr)=F
\bigr\}}P^{\omega}[T_{V(F)} \leq\Delta_n]\bigr]
\nonumber\\[-8pt]\\[-8pt]
&&\qquad\leq  {\mathbf E}^a\bigl[c_*(e)\bigr] {\mathbf E}^a
\bigl[ {\mathbf1} {\bigl\{E\bigl(\mathrm{BAD}^{\mathrm
{s}}_x\bigl(
\omega^{e\ \mathrm{an}}\bigr)\bigr)=F\bigr\}}P^{\omega}[T_{V(F)}
\leq\Delta_n]\bigr].\nonumber
\end{eqnarray}

We can use this same independence property again to write
%
\begin{eqnarray}
\label{ss3} &&{\mathbf E}^a\bigl[ {\mathbf1} {\bigl\{E\bigl(
\mathrm{BAD}^{\mathrm{s}}_x\bigl(\omega^{e\ \mathrm
{ an}}\bigr)\bigr)=F
\bigr\}}P^{\omega}[T_{V(F)} \leq\Delta_n]\bigr]
\nonumber\\
&&\qquad\leq  \frac1 {{\mathbf P}^a\bigl[c_*(e)> K\bigr]}\nonumber\\
&&\qquad\quad{}\times {\mathbf
E}^a\bigl[ {\mathbf1} {\bigl\{c_*(e)> K\bigr\}} {\mathbf1} {\bigl\{E
\bigl(\mathrm{BAD}^{\mathrm{s}}_x\bigl(\omega^{e\
\mathrm{an}}\bigr)
\bigr)=F\bigr\}}P^{\omega}[T_{V(F)} \leq\Delta_n]
\bigr]
\\
&&\qquad\leq  \frac1 {{\mathbf P}^a\bigl[c_*(e)> K\bigr]} {\mathbf
E}^a\bigl[ {\mathbf1} {\bigl\{c_*(e)> K\bigr\}} {\mathbf1} {\bigl\{E
\bigl(\mathrm{BAD}^{\mathrm{s}}_x\bigr)=F\bigr\}}P^{\omega
}[T_{V(F)}
\leq\Delta_n]\bigr]\nonumber
\\
&&\qquad\leq  \frac1 {{\mathbf P}^a\bigl[c_*(e)> K\bigr]} {\mathbf
E}^a\bigl[{\mathbf1} {\bigl\{E\bigl(\mathrm{BAD}^{\mathrm{s}}_x
\bigr)=F\bigr\}}P^{\omega}[T_{V(F)} \leq\Delta_n]
\bigr].\nonumber
\end{eqnarray}

Putting together (\ref{ss1}), (\ref{ss2}) and (\ref{ss3}) proves the
first part of the result. The second part can be handled using exactly
the same techniques.
\end{pf}

\subsection{Correlations of hitting probabilities of traps and their shape}

We have:
%
\begin{lemma}\label{endthis}
Fix $a\in[1/K,K]^{\mathcal{E}}$ and $x\in\Z^d$. Take $n\geq0$ and
$F\subset E(\Z^d)$, for $K$ large enough, we have
\begin{eqnarray*}
&& {\mathbf E}^a\bigl[{\mathbf1} {\bigl\{E\bigl(\mathrm{BAD}^{\mathrm{s}}_x(K)
\bigr)=F\bigr\}} P^{\omega}[T_{V(F)} \leq\Delta_n]
\bigr]
\\
&&\qquad\leq  C\exp\bigl(\lambda\bigl\llvert\partial V(F)\bigr\rrvert\bigr)
{\mathbf
P}^{a}\bigl[E\bigl(\mathrm{BAD}^{\mathrm{s}}_x\bigr)=F
\bigr]\PR^a[T_{B_{\Z^d}^{\infty}(x,3\llvert
\partial V(F)\rrvert)}<\Delta_n],
\end{eqnarray*}
where $B_{\Z^d}^{\infty}(x,r)$ is the ball of center $x$ and radius $r$
in $\Z^d$ for the infinity norm $\|\cdot\|
_{\infty}$.
\end{lemma}
\begin{pf}
First, we introduce the set of vertices
\[
\partial^{\mathrm{good}}F =\{z\notin F \mbox{ where $z=y+e_1\pm
e_i$ with $y\in F$ and $i\leq d$}\},
\]
which implies that $\partial^{\mathrm{good}} \mathrm{BAD}^{\mathrm{s}}_x(K)$
is composed of good points.

Then we set $\partial^{\mathrm{good}}_{\mathrm{edge}} F$ to be the set of
edges $\{[z,z+e_1], [z+e_1,z+e_1+e_i]$ with $z+e_1+e_i\in
\partial^{\mathrm{good}} F$ and $i\leq d\}$. Those definitions
ensure that $\{E(\mathrm{BAD}^{\mathrm{s}}_x(K))=F\}$ is measurable with
respect to the conductances of the edges adjacent to $F$, the edges in
$\partial^{\mathrm{good}}_{\mathrm{edge}} F$ and the events $\{y$ is
good$\}$ for $y\in\partial^{\mathrm{good}}F$. These\vspace*{1pt} notations are
illustrated in Figure \ref{fig4}.

\begin{figure}

\includegraphics{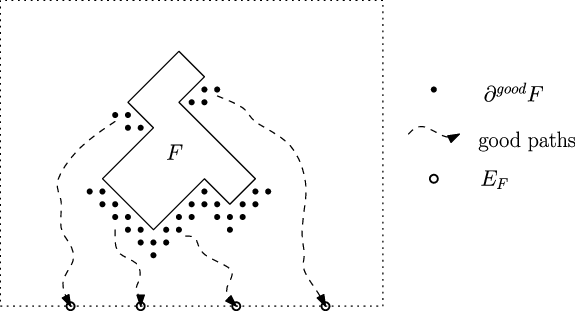}

\caption{The event $C_F$ depends only on what happens inside the dotted
box $E(B_{\Z^d}^{\infty}(x,3\llvert\partial V(F)\rrvert
))$. In this picture the
direction of $e_1$ is down.} \label{fig4}
\end{figure}

We extend the notion of $x$-open in the following manner: for any set
$A$ of edges, we say that a point $y$ is $A$-open in $\omega$, if $y$
is open in the environment coinciding with $\omega$ on all edges of $A$
and where all other edges are open.

Let us notice that if $E(\mathrm{BAD}^{\mathrm{s}}_x(K))=F$, and then we
have the two following events:
\begin{longlist}[(2)]
\item[(1)] $E(\mathrm{BAD}^{\mathrm{s}}_x(K))$ contains at least $F$, and
this can be determined by the conductances of the edges adjacent to
$y\in F$ and the edges in $\partial^{\mathrm{good}}_{\mathrm{edge}} F$;
\item[(2)] all vertices of $\partial^{\mathrm{good}} F$ are connected
with an $E(B_{\Z^d}^{\infty}(x,3\llvert\partial V(F)\rrvert
))$-open directed
path with even length to $\partial^+ B_{\Z^d}^{\infty}(x,3\llvert
\partial V(F)\rrvert):=\{z\in B_{\Z^d}^{\infty}(x,\break3\llvert\partial
V(F)\rrvert)$, with
$(z-x)\cdot e_1=3\llvert\partial V(F)\rrvert\}$. We denote
by $E_F$ the
endpoints of these paths in $\partial^+ B_{\Z^d}^{\infty}(x,3\llvert
\partial V(F)\rrvert)$.
\end{longlist}

We denote by $C_F$ the event which is the intersection of the two
previous events. The key properties of this definition are that:
\begin{longlist}[(2)]
\item[(1)] $C_F$ is measurable with respect to the conductances of the
edges in $E(B_{\Z^d}^{\infty}(x,3\llvert\partial V(F)\rrvert))$,
which meant
that it is ${\mathbf P}$-independent of
\[
P^{\omega}[T_{B_{\Z^d}^{\infty
}(x,3\llvert\partial V(F)\rrvert)}<\Delta_n];
\]
\item[(2)] on $C_F$, if the points in $E_F$ are $E(B_{\Z^d}^{\infty
}(x,3\llvert\partial V(F)\rrvert))^c$-good, then all points
in $\partial
^{\mathrm
{good}} F$ are good, and since $E(\mathrm{BAD}^{\mathrm{s}}_x(K))$ contains
at least $F$, we have $E(\mathrm{BAD}^{\mathrm{s}}_x(K))=F$.
\end{longlist}

Hence
\begin{eqnarray*}
&&
{\mathbf E}^a\bigl[{\mathbf1} {\bigl\{E\bigl(\mathrm{BAD}^{\mathrm{s}}_x(K)
\bigr)=F\bigr\}} P^{\omega
}[T_{V(F)} \leq\Delta_n]
\bigr]\\
&&\qquad\leq {\mathbf E}^a\bigl[{\mathbf1} {\{C_F\}}
P^{\omega}[T_{B_{\Z^d}^{\infty}(x,3\llvert\partial V(F)\rrvert)}<\Delta_n\bigr]
\\
&&\qquad = {\mathbf P}^a[C_F] \PR^a[T_{B_{\Z^d}^{\infty}(x,3\llvert
\partial V(F)\rrvert)}<
\Delta_n].
\end{eqnarray*}

Moreover, since $E_F$ can be injected into $\partial^{\mathrm{good}} F$,
we see that $\llvert E_F\rrvert\leq\llvert\partial
^{\mathrm{good}} F\rrvert\leq
C(d)\llvert\partial V(F)\rrvert$. Now, using the FKG
inequality (see \cite{Harris}) in
the first line and the fact that $\{E_F=E\}$ and $\{\mbox{the points in
}E\mbox{ are }E(B_{\Z^d}^{\infty}(x,\break3\llvert\partial
V(F)\rrvert))^c\mbox{-good}\}$
are ${\mathbf P}^a$-independent in the second, we obtain
\begin{eqnarray*}
&& {\mathbf P}[0\mbox{ is good}]^{C(d)\llvert\partial V(F)\rrvert
}{\mathbf P}^a[C_F]
\\
&&\qquad\leq  \sum_{E\subset\Z^d, \llvert E\rrvert\leq C(d)\llvert\partial
V(F)\rrvert} {\mathbf P}^a[E_F=E, C_F]\\
&&\hspace*{120.1pt}{}\times
{\mathbf P}^a\bigl[\mbox{points in }E\mbox{ are $E\bigl(B_{\Z
^d}^{\infty}
\bigl(x,3\bigl\llvert\partial V(F)\bigr\rrvert\bigr)\bigr)^c$-good}
\bigr]
\\
&&\qquad\leq  \sum_{E\subset\Z^d, \llvert E\rrvert\leq C(d)\llvert\partial
V(F)\rrvert} {\mathbf P}^a
\bigl[E_F=E, C_F\mbox{, points in }E\\
&&\qquad\quad\hspace*{108.2pt}\mbox{are $E
\bigl(B_{\Z^d}^{\infty
}\bigl(x,3\bigl\llvert\partial V(F)\bigr
\rrvert\bigr)\bigr)^c$-good}\bigr]
\\
&&\qquad\leq  \sum_{E\subset\Z^d, \llvert E\rrvert\leq C(d)\llvert\partial
V(F)\rrvert} {\mathbf P}^a
\bigl[E_F=E, E\bigl(\mathrm{BAD}^{\mathrm{s}}_x(K)
\bigr)=F\bigr]
\\
&&\qquad= {\mathbf P}^a\bigl[E\bigl(\mathrm{BAD}^{\mathrm{s}}_x(K)
\bigr)=F\bigr].
\end{eqnarray*}

For $K$ large enough, we have ${\mathbf P}[0\mbox{ is
$K$-good}]^{C(d)\llvert\partial V(F)\rrvert}\geq\exp
(-\lambda\llvert\partial V(F)\rrvert)$, and hence
using two previous equation, we see that
\begin{eqnarray*}
&& {\mathbf E}^a\bigl[{\mathbf1} {\bigl\{E\bigl(\mathrm{BAD}^{\mathrm{s}}_x(K)
\bigr)=F\bigr\}} P^{\omega}[T_{V(F)} \leq\Delta_n]
\bigr]
\\
&&\qquad\leq  C\exp\bigl(\lambda\bigl\llvert\partial V(F)\bigr\rrvert\bigr)
{\mathbf
P}^{a}\bigl[E\bigl(\mathrm{BAD}^{\mathrm{s}}_x\bigr)=F
\bigr]\PR^a[T_{B_{\Z^d}^{\infty}(x,3\llvert
\partial V(F)\rrvert)}<\Delta_n].
\end{eqnarray*}
\upqed\end{pf}

\subsection{\texorpdfstring{Proof of Theorem \protect\ref{tauu1}}{Proof of Theorem 8.1}}

We are now getting to the pivotal point in the proof where we control
$\Delta_n$ in terms of the expected number of sites encountered
before~$\Delta_n$.
%
\begin{lemma}\label{lkjh}
There exists $K_0$ such that, for any $K\geq K_0$, we have
\[
\max_{a\in[1/K,K]^{\mathcal{E}}}\ES^a\bigl[\Delta_n {
\mathbf1} {\{0\in\mathrm{GOOD}_K\}}\bigr]\leq C(K) \max
_{a\in[1/K,K]^{\mathcal{E}}} \ES^a[N_n],
\]
where $N_n= \llvert\{x\in\Z^d$ such that $T_x\leq\Delta
_n\} \rrvert$ is
the number of sites reached before~$\Delta_n$.
\end{lemma}

\begin{pf}
\textit{Step} 1: \textit{Decompose $\Delta_n$ into the time spent in traps and
outside of traps.}
If $0\in\mathrm{GOOD}(\omega) $, then a walk started at $0$ can only be
in a vertex of $\mathrm{BAD}(\omega) $ between visits to $\partial
\mathrm
{BAD}(\omega)$. Hence, on $\{0\in\mathrm{GOOD}(\omega)\}$,
\[
\sum_{x\in\mathrm{BAD}(\omega) } \sum_{i=0}^{\Delta_n}{
\mathbf1} {\{ X_i=x\}} \leq\sum_{x\in\partial\mathrm{BAD}(\omega) }
\sum_{i=0}^{\Delta
_n}{\mathbf1} {
\{X_i=x\}} T^+_{\mathrm{GOOD}(\omega)}\circ\theta_i.
\]

Hence, on $\{0\in\mathrm{GOOD}(\omega)\}$, since $\Z^d$ is partitioned
into two parts $\mathrm{GOOD}(\omega)$ and $\mathrm{BAD}(\omega)$ we have
%
\begin{eqnarray}
\label{tau}\qquad \Delta_n &\leq& \sum_{x\in\Z^d}
\sum_{i=0}^{\Delta_n}{\mathbf1} {\{
X_i=x\}}
\nonumber\\
&\leq & \sum_{x\in\mathrm{GOOD}(\omega)} \sum
_{i=0}^{\Delta
_n}{\mathbf1} {\{X_i=x\}} +
\sum_{x\in\mathrm{BAD}(\omega) } \sum_{i=0}^{\Delta_n}{
\mathbf1} {\{X_i=x\}}
\\
&\leq & \sum_{x\in\mathrm{GOOD}(\omega)} \sum
_{i=0}^{\Delta
_n}{\mathbf1} {\{X_i=x\}} +
\sum_{x\in\partial\mathrm{BAD}(\omega) } \sum_{i=0}^{\Delta
_n}{
\mathbf1} {\{X_i=x\}} T^+_{\mathrm{GOOD}(\omega)}\circ\theta_i.\nonumber
\end{eqnarray}

Hence, on $\{0\in\mathrm{GOOD}(\omega)\}$,
%
\begin{eqnarray}
\label{tau1} \Delta_n &\leq& \sum_{x\in\mathrm{GOOD}(\omega)}
{\mathbf1} {\{ T_x\leq\Delta_n\}} \sum
_{i=0}^{\infty} {\mathbf1} {\{X_i=x\}}
\nonumber\\[-8pt]\\[-8pt]
&&{} + \sum_{x\in\partial\mathrm{BAD}(\omega) } {\mathbf1} {
\{T_x\leq\Delta_n\}} \sum_{i=0}^{\infty}{
\mathbf1} {\{X_i=x\}} T_{\mathrm{GOOD}(\omega)}^+\circ\theta_i.\nonumber
\end{eqnarray}

\vspace*{9pt}

\textit{Step} 2: \textit{Reduce the problem to hitting probabilities and time of
excursions in traps.}
We can use Markov's property to say that for any $x\in\Z^d$, on $\{
0\in
\mathrm{GOOD}(\omega)\}$
\[
E^{\omega} \Biggl[{\mathbf1} {\{T_x\leq\Delta_n\}}
\sum_{i=0}^{\infty} {\mathbf1} {
\{X_i=x\}} \Biggr]=P^{\omega}[T_x \leq
\Delta_n]E^{\omega}_x \Biggl[\sum
_{i=0}^{\infty} {\mathbf1} {\{X_i=x\}}
\Biggr]
\]
and
\begin{eqnarray*}
&& E^{\omega} \Biggl[{\mathbf1} {\{T_x\leq\Delta_n
\}} \sum_{i=0}^{\infty} {\mathbf1} {
\{X_i=x\}}T_{\mathrm{GOOD}(\omega)}^+\circ\theta_i \Biggr]
\\
&&\qquad= P^{\omega}[T_x \leq\Delta_n]E^{\omega}_x
\Biggl[\sum_{i=0}^{\infty} {\mathbf1} {
\{X_i=x\}}T_{\mathrm{GOOD}(\omega)}^+\circ\theta_i \Biggr].
\end{eqnarray*}

This implies, using (\ref{tau1}), that on $\{0\in\mathrm{GOOD}(\omega
)\}$
\begin{eqnarray*}
E^{\omega}[\Delta_n] &\leq& \sum_{x\in\mathrm{GOOD}(\omega)}
P^{\omega
}[T_x \leq\Delta_n]E^{\omega}_x
\Biggl[\sum_{i=0}^{\infty} {\mathbf1} {
\{X_i=x\}} \Biggr]
\\
&&{} +\sum_{x\in\partial\mathrm{BAD}(\omega) } P^{\omega}[T_x \leq
\Delta_n] E^{\omega}_x \Biggl[\sum
_{i=0}^{\infty} {\mathbf1} {\{X_i=x\}
}T_{\mathrm
{GOOD}(\omega
)}^+\circ\theta_i \Biggr].
\end{eqnarray*}

Now we have
%
\begin{eqnarray}
\label{start} &&\max_{a\in[1/K,K]^{\mathcal{E}}}\ES^a\bigl[
\Delta_n{\mathbf1} {\{0\in\mathrm{GOOD}_K\}}\bigr]
\nonumber\\
&&\qquad\leq C\max_{a\in[1/K,K]^{\mathcal{E}}} \Biggl[ {\mathbf E}^a
\Biggl[\sum_{x\in\mathrm{GOOD}(\omega)} P^{\omega}[T_x
\leq\Delta_n]E^{\omega
}_x \Biggl[\sum
_{i=0}^{\infty} {\mathbf1} {\{X_i=x\}}
\Biggr] \Biggr]\nonumber
\\
&&\qquad\quad{}+\max_{a\in[1/K,K]^{\mathcal{E}}}{\mathbf E}^a \Biggl[{\mathbf1}
{\{0 \notin\mathrm{BAD}\}}\\
&&\hspace*{77.5pt}\qquad\quad{}\times\sum_{x\in\partial\mathrm{BAD}(\omega) }
P^{\omega}[T_x \leq\Delta_n]\nonumber\\
&&\hspace*{133.2pt}\qquad\quad{}\times E^{\omega}_x
\Biggl[\sum_{i=0}^{\infty} {\mathbf1} {\{
X_i=x\}}T_{\mathrm
{GOOD}(\omega)}^+\circ\theta_i \Biggr]
\Biggr] \Biggr].\nonumber
\end{eqnarray}

So that, using Lemma \ref{backbone},
\begin{eqnarray*}
&&\max_{a\in[1/K,K]^{\mathcal{E}}} \ES^a\bigl[\Delta_n{
\mathbf1} {\{0\in\mathrm{GOOD}_K\}}\bigr]
\\
&&\qquad\leq  C \Biggl[\max_{a\in[1/K,K]^{\mathcal{E}}}\ES^a \biggl[\sum
_{x\in\Z
^d} {\mathbf1} {\{T_x \leq
\Delta_n\}} \biggr]
\\
&&\hspace*{11.5pt}\qquad\quad{} +\max_{a\in[1/K,K]^{\mathcal{E}}} {\mathbf E}^a \Biggl[\sum
_{x\in
\partial\mathrm{BAD}(\omega) } P^{\omega}[T_x \leq
\Delta_n] \\
&&\hspace*{11.5pt}\qquad\hspace*{133.2pt}{}\times E^{\omega
}_x \Biggl[\sum
_{i=0}^{\infty} {\mathbf1} {\{X_i=x
\}}T_{\mathrm
{GOOD}(\omega
)}^+\circ\theta_i \Biggr] \Biggr] \Biggr].
\end{eqnarray*}

Let us focus, for now, on the second term which is the more difficult
to upper-bound. It corresponds to the time spent in the traps we
encounter in the first regeneration time. By Markov's property at $x\in
\partial\mathrm{BAD}(\omega)$,
%
\begin{eqnarray}
\label{cantstandthis}
&&
E^{\omega}_x \Biggl[\sum
_{i=0}^{\infty} {\mathbf1} {\{X_i=x\}
}T_{\mathrm
{GOOD}(\omega
)}^+\circ\theta_i \Biggr] \nonumber\\
&&\qquad = \sum
_{i=0}^{\infty} E^{\omega}_x\bigl[ {
\mathbf1} {\{X_i=x\}}\bigr] E^{\omega}_x
\bigl[T^+_{\mathrm{GOOD}(\omega)}\bigr]
\nonumber\\[-8pt]\\[-8pt]
&&\qquad= \Biggl(\sum_{i=0}^{\infty}
E^{\omega}_x\bigl[ {\mathbf1} {\{X_i=x\}}\bigr]
\Biggr) E^{\omega
}_x\bigl[T^+_{\mathrm{GOOD}(\omega)}\bigr]
\nonumber\\
&&\qquad\leq C E^{\omega}_x\bigl[T^+_{\mathrm{GOOD}(\omega)}
\bigr],\nonumber
\end{eqnarray}
where we used Lemma \ref{backbone}. Hence
%
\begin{eqnarray}
\label{step1} && \max_{a\in[1/K,K]^{\mathcal{E}}} {\mathbf E }^a \Biggl[{
\mathbf1} {\{0 \notin\mathrm{BAD}\}}\sum_{x\in\partial\mathrm{BAD}(\omega
) }
P^{\omega}[T_x \leq\Delta_n]\nonumber\\[-0.7pt]
&&\qquad\quad\hspace*{131.1pt}{}\times E^{\omega}_x
\Biggl[\sum_{i=0}^{\infty} {\mathbf1} {\{
X_i=x\}}T_{\mathrm
{GOOD}(\omega)}^+\circ\theta_i \Biggr]
\Biggr]
\nonumber\\[-0.7pt]
&&\qquad\leq  C \max_{a\in[1/K,K]^{\mathcal{E}}} {\mathbf E }^a \biggl[{
\mathbf1} {\{0 \notin\mathrm{BAD}\}}\sum_{x\in\partial\mathrm{BAD}(\omega)
}
P^{\omega}[T_x \leq\Delta_n] E^{\omega}_x
\bigl[T^+_{\mathrm{GOOD}(\omega)}\bigr] \biggr]\nonumber\hspace*{-25pt}
\\[-0.7pt]
&&\qquad\leq  C \max_{a\in[1/K,K]^{\mathcal{E}}} \sum
_{x\in\Z^d} {\mathbf E }^a \bigl[{\mathbf1} {\bigl\{x\in
\partial\mathrm{BAD}(\omega) \bigr\}} {\mathbf1} {\{0 \notin\mathrm{BAD}
\}}\\[-0.7pt]
&&\qquad\quad\hspace*{93.1pt}{}\times P^{\omega}[T_{x} \leq\Delta_n]
E^{\omega}_x\bigl[T^+_{\mathrm
{GOOD}(\omega)}\bigr] \bigr]\nonumber
\\[-0.7pt]
&&\qquad\leq  C\max_{a\in[1/K,K]^{\mathcal{E}}}\sum_{x\in\Z^d}
{\mathbf E }^a \bigl[{\mathbf1} {\bigl\{x\in\partial\mathrm{BAD}(\omega)
\bigr\}} {\mathbf1} {\{0 \notin\mathrm{BAD}\}}\nonumber\\
&&\qquad\quad\hspace*{93.1pt}{}\times P^{\omega
}[T_{\mathrm{BAD}^{\mathrm{s}}_x}
\leq\Delta_n] E^{\omega
}_x\bigl[T^+_{\mathrm
{GOOD}(\omega)}
\bigr] \bigr],\nonumber
\end{eqnarray}
where we used that for $x\in\partial\mathrm{BAD}(\omega)$, we have
$x\in\mathrm{BAD}^{\mathrm{s}}_x$ by definition.\vspace*{9pt}

\textit{Step} 3: \textit{Estimate the time spent during an excursion in a trap
by its size.}
Using Lemma \ref{timetrap},
%
\begin{eqnarray}
\label{step3} && {\mathbf E }^a \bigl[{\mathbf1} {\{0 \notin\mathrm{BAD}\}}
{\mathbf1} {\bigl\{ x\in\partial\mathrm{BAD}(\omega) \bigr\}} P^{\omega
}[T_{\mathrm
{BAD}^{\mathrm{s}}_x}
\leq\Delta_n] E^{\omega}_x\bigl[T^+_{\mathrm{GOOD}(\omega)}
\bigr] \bigr]
\nonumber\\[-0.7pt]
&&\qquad\leq  C{\mathbf E }^a \biggl[{\mathbf1} {\{0 \notin\mathrm{BAD}
\}} {\mathbf1} {\bigl\{x\in\partial\mathrm{BAD}(\omega) \bigr\}
}\nonumber\\[-0.7pt]
&&\qquad\quad\hspace*{24pt}{}\times P^{\omega}[T_{\mathrm
{BAD}^{\mathrm{s}}_x}
\leq\Delta_n] \exp\bigl(3\lambda\bigl\llvert\partial
\mathrm{BAD}^{\mathrm{s}}_x(\omega)\bigr\rrvert\bigr)\nonumber
\\[-0.7pt]
&&\qquad\quad\hspace*{95.5pt}{}\times \biggl(1+\sum_{e\in E(\mathrm{BAD}^{\mathrm{s}}_x)} c_*(e) \biggr)
\biggr]
\\[-0.7pt]
&&\qquad\leq \mathop{\sum_{ F\subset E(\Z^d) }}_{ x \in V(F), 0\notin V(F) }
C \exp\bigl(3\lambda\bigl\llvert\partial V(F)\bigr\rrvert\bigr) \nonumber\\[-0.7pt]
&&\qquad\quad\hspace*{59pt}{}\times \sum
_{e\in F} {\mathbf E }^a \bigl[{\mathbf1} {\bigl\{E\bigl(
\mathrm{BAD}^{\mathrm{s}}_x\bigr)=F\bigr\}} {\mathbf1} {\bigl\{x\in
\partial\mathrm{BAD}(\omega) \bigr\}}\nonumber
\\[-0.7pt]
&&\qquad\quad\hspace*{127pt}{}\times  P^{\omega}[T_{V(F)} \leq\Delta_n]
\bigl(1+c_*(e) \bigr) \bigr].\nonumber
\end{eqnarray}
Now by Lemma \ref{condtrap} (applied in the case $E_*[c_*]<\infty$)
and Lemma \ref{endthis},
%
\begin{eqnarray}
\label{step25}\quad && {\mathbf E }^a \bigl[{\mathbf1} {\bigl\{E\bigl(
\mathrm{BAD}^{\mathrm{s}}_x\bigr)=F\bigr\}} P^{\omega}[T_{V(F)}
\leq\Delta_n] c_*(e) \bigr]
\nonumber\\[-0.7pt]
&&\qquad\leq  C {\mathbf E }^a \bigl[{\mathbf1} {\bigl\{E\bigl(
\mathrm{BAD}^{\mathrm{s}}_x\bigr)=F\bigr\} } P^{\omega
}[T_{V(F)}
\leq\Delta_n] \bigr]
\\[-0.7pt]
&&\qquad\leq C \exp\bigl(\lambda\bigl\llvert\partial V(F)\bigr\rrvert\bigr)
{\mathbf P}^{a}\bigl[E\bigl(\mathrm{BAD}^{\mathrm{s}}_x
\bigr)=F\bigr]\PR^a[T_{B_{\Z^d}^{\infty}(x,3\llvert
\partial V(F)\rrvert)}<\Delta_n],\nonumber
\end{eqnarray}
so using (\ref{step3}), we have
\begin{eqnarray}
\label{step33} && {\mathbf E }^a \bigl[{\mathbf1} {\{0 \notin\mathrm{BAD}
\}} {\mathbf1} {\bigl\{ x\in\partial\mathrm{BAD}(\omega) \bigr\}}
P^{\omega}[T_{\mathrm
{BAD}^{\mathrm{s}}_x} \leq\Delta_n] E^{\omega}_x
\bigl[T^+_{\mathrm{GOOD}(\omega)}\bigr] \bigr]
\nonumber\\
&&\qquad\leq  C \mathop{\sum_{ F\subset E(\Z^d) }}_{ x \in V(F), 0\notin
V(F) }
\exp\bigl(4\lambda\bigl\llvert\partial V(F)\bigr\rrvert\bigr) {\mathbf
P}^{a}\bigl[E\bigl(\mathrm{BAD}^{\mathrm{s}}_x\bigr)=F
\bigr]\nonumber\\[-8pt]\\[-8pt]
&&\qquad\quad\hspace*{68.6pt}{}\times\PR^a[T_{B_{\Z^d}^{\infty}(x,3\llvert
\partial V(F)\rrvert)}<\Delta_n]\nonumber
\nonumber\\
&&\qquad= C \sum_{k\geq0} \exp(4\lambda k) {\mathbf
P}^{a}\bigl[\bigl\llvert\partial\mathrm{BAD}^{\mathrm{s}}_x
\bigr\rrvert=k\bigr]\PR^a[T_{B_{\Z
^d}^{\infty}(x,3k)}<\Delta_n],\nonumber
\end{eqnarray}
where we sum over the sets $F$ of same size in the last line.\vspace*{9pt}

\textit{Step} 4: \textit{Conclusion.}
When $x\notin\partial\mathrm{BAD}$ we use the notation $\mathrm
{BAD}^{\mathrm
{s}}_x=\partial\mathrm{BAD}^{\mathrm{s}}_x=\{x\}$. Using (\ref
{step1}), (\ref{step33}) and (\ref{start}) we obtain
%
\begin{eqnarray}
\label{step501} && \max_{a\in[1/K,K]^{\mathcal{E}}} \ES^a\bigl[
\Delta_n{\mathbf1} {\{ 0\in\mathrm{GOOD}_K\}}\bigr]
\nonumber\\
&&\qquad\leq  C\max_{a\in[1/K,K]^{\mathcal{E}}} \biggl[\sum
_{x\in\Z^d} \PR^a[T_{\mathrm{BAD}^{\mathrm{s}}_x} \leq
\Delta_n]\nonumber\\
&&\qquad\quad\hspace*{60pt}{}+ \sum_{x\in\Z
^d}{\mathbf E
}^a \bigl[ \exp\bigl(4\lambda\bigl\llvert\partial
\mathrm{BAD}^{\mathrm
{s}}_x\bigr\rrvert\bigr)P^{\omega}[T_{\mathrm{BAD}^{\mathrm{s}}_x}
\leq\Delta_n] \bigr] \biggr]\nonumber
\\
&&\qquad\leq  C\max_{a\in[1/K,K]^{\mathcal{E}}} \sum
_{x\in\Z^d}{\mathbf E }^a \bigl[ \exp\bigl(4\lambda\bigl
\llvert\partial\mathrm{BAD}^{\mathrm{s}}_x\bigr\rrvert
\bigr)P^{\omega
}[T_{\mathrm{BAD}^{\mathrm{s}}_x} \leq\Delta_n] \bigr]
\\
&&\qquad\leq  C\max_{a\in[1/K,K]^{\mathcal{E}}} \sum
_{x\in\Z^d} \sum_{k\geq
0} \exp(4\lambda
k) {\mathbf P}^{a}\bigl[\bigl\llvert\partial\mathrm{BAD}^{\mathrm{s}}_x
\bigr\rrvert=k\bigr]\PR^a[T_{B_{\Z^d}^{\infty
}(x,3k)}<\Delta_n]\nonumber
\\
&&\qquad\leq  C\max_{a\in[1/K,K]^{\mathcal{E}}} \max_{x\in\Z^d} \biggl[
\sum_{k\geq0} \biggl[ \exp(4\lambda k) {\mathbf
P}^{a}\bigl[\bigl\llvert\partial\mathrm{BAD}^{\mathrm{s}}_x
\bigr\rrvert=k\bigr] \nonumber\\
&&\qquad\quad\hspace*{99.3pt}{}\times\biggl(\sum_{x\in\Z^d}
\PR^a[T_{B_{\Z
^d}^{\infty
}(x,3k)}<\Delta_n] \biggr) \biggr]\biggr].\nonumber
\end{eqnarray}

Now, we can use the fact that $\PR^a[T_{B_{\Z^d}^{\infty
}(x,3k)}<\Delta
_n] \leq\sum_{y\in B_{\Z^d}^{\infty}(x,3k)} \PR^a[T_{y}<\Delta
_n]$ so
that $\sum_{x\in\Z^d} \PR^a[T_{B_{\Z^d}^{\infty}(x,3k)}<\Delta
_n] \leq
Ck^d \sum_{x\in\Z^d} \PR^a[T_{x}<\Delta_n]\leq\break Ck^d \ES^a[N_n] $. This
means that
%
\begin{eqnarray}
\label{step51} && \max_{a\in[1/K,K]^{\mathcal{E}}} \ES^a\bigl[
\Delta_n{\mathbf1} {\{ 0\in\mathrm{GOOD}_K\}}\bigr]
\nonumber\\
&&\qquad\leq  C \max_{a\in[1/K,K]^{\mathcal{E}}} \max_{x\in\Z^d}
\biggl[\sum_{k\geq0} k^d \exp(4\lambda k) {
\mathbf P}^{a}\bigl[\bigl\llvert\partial\mathrm{BAD}^{\mathrm{s}}_x
\bigr\rrvert=k\bigr] \biggr] \\
&&\qquad\quad\hspace*{0pt}{}\times\Bigl(\max_{a\in
[1/K,K]^{\mathcal
{E}}}\ES
^a[N_n] \Bigr)\nonumber
\\
&&\qquad\leq  C\max_{a\in[1/K,K]^{\mathcal{E}}} \max_{x\in\Z^d} {\mathbf
E}^a \bigl[ \exp\bigl(5\lambda\bigl\llvert\partial
\mathrm{BAD}^{\mathrm{s}}_x\bigr\rrvert\bigr) \bigr] \max
_{a\in
[1/K,K]^{\mathcal{E}}}\ES^a[N_n].\nonumber
\end{eqnarray}

The random variables $\mathrm{BAD}^{\mathrm{s}}_x$ are measurable with
respect to the events $\{y \mathrm{ is open}\}$ for $y\in\Z^d$. The
expectation is that ${\mathbf E}^a$ conditions only one such random
variable, so
\[
\max_{x\in\Z^d}\max_{a\in[1/K,K]^{\mathcal{E}}} {\mathbf E}^a
\bigl[ \exp\bigl(5\lambda\bigl\llvert\partial\mathrm{BAD}^{\mathrm{s}}_x
\bigr\rrvert\bigr) \bigr] \leq\frac1{{\mathbf P}[0\ \mathrm{is}\ \mathrm{open}]}\max
_{x\in\Z^d}{\mathbf E} \bigl[ \exp\bigl(5\lambda\bigl\llvert\partial
\mathrm{BAD}^{\mathrm{s}}_x\bigr\rrvert\bigr) \bigr]
\]
by translation invariance, and using Lemma \ref{tgb} the right-hand
side is finite for $K$ large enough. This and the two last equations
complete the proof.
\end{pf}

Let us estimate $\PR^a[T_x \leq\tau_1^{(K)}]$.
%
\begin{lemma}\label{hittrap}
Take $x\in\Z^d$, and then for any $M<\infty$, there exists $K_0$ such
that for any $K\geq K_0$,
\[
\max_{a\in[1/K,K]^{\mathcal{E}}}\PR^a\bigl[T_x \leq
\tau_1^{(K)} \bigr] \leq C \| x
\|_{\infty}^{-M}.
\]
\end{lemma}

\begin{pf}
Denote by $\chi$ the smallest integer so that $\{X_i, i\in[0,\tau
_1]\}
\subset B(\chi,\chi^{\alpha})$. First let us notice that
%
\begin{eqnarray}
\label{expostep12}\qquad
&&
\max_{a\in[1/K,K]^{\mathcal{E}}}
\PR^a[\chi\geq k ] \nonumber\\
&&\qquad\leq\max_{a\in
[1/K,K]^{\mathcal{E}}}
\PR^a[X_{\tau_1}\cdot\vec{\ell}\geq k]
\\
&&\qquad\quad{} +\max_{a\in[1/K,K]^{\mathcal{E}}}\PR^a\Bigl[X_{\tau_1}
\cdot\vec{\ell} < k, \max_{0\leq i,j \leq\tau_1} \max_{l\in[2,d]}
\bigl\llvert(X_j-X_i)\cdot f_l\bigr
\rrvert\geq k^{\alpha}\Bigr].\nonumber
\end{eqnarray}

We can upper-bound the first term as follows:
\[
\max_{a\in[1/K,K]^{\mathcal{E}}}\PR^a[X_{\tau_1^{(K)}}\cdot\vec{\ell
}\geq k]\leq C k^{-M}
\]
for any $M$ by choosing $K$ large enough by Theorem \ref{tailtau2}.

The second term can be upper-bounded with the following reasoning: on
the event that $X_{\tau_1}\cdot\vec{\ell}\leq k$ and $\max_{j\neq
1}\max_{0\leq j_1,j_2 \leq\tau_{1}} \llvert
(X_{j_1}-X_{j_2})\cdot f_j \rrvert
\geq k^{\alpha}$,\vspace*{1pt} $X_n$ does not exit the box $B(k,k^{\alpha})$
through $\partial^+ B(k,k^{\alpha})$, this means
\begin{eqnarray*}
&& \max_{a\in[1/K,K]^{\mathcal{E}}}\PR^a\Bigl[X_{\tau_1}\cdot
\vec{\ell} < k, \max_{0\leq i,j \leq\tau_1} \max_{l\in[2,d]} \bigl
\llvert(X_j-X_i)\cdot f_l\geq
k^{\alpha}\bigr\rrvert\Bigr]
\\
&&\qquad\leq  \max_{a\in[1/K,K]^{\mathcal{E}}}\PR^a[T_{\partial
B(k,k^{\alpha
})}\neq
T_{\partial^+ B(k,k^{\alpha})}]\leq ce^{-ck}
\end{eqnarray*}
by Theorem \ref{BL2}.

This turns (\ref{expostep12}) into
%
\begin{equation}
\label{expostep15} \max_{a\in[1/K,K]^{\mathcal{E}}}
\PR^a[\chi\geq k]\leq Ck^{-M}
\end{equation}
for any $M$ for $K$ large enough.

Now assume that $\{T_{x} \leq\tau_1\}$ then $\chi\geq
\| x\|_{\infty}^{1/\alpha}$. Hence,
by the previous equation
\[
\max_{a\in[1/K,K]^{\mathcal{E}}}\PR^a[T_{x} \leq
\tau_1] \leq C \| x\|
_{\infty
}^{-M/\alpha},
\]
which proves the lemma, since $\alpha$ is fixed and $M$ is arbitrary.
\end{pf}

We can now prove Theorem \ref{tauu1}.
\begin{pf}
Let us introduce $\tilde{N}_{\tau_i}$ the number of different sites
seen between $\tau_{i-1}$ and $\tau_i$ for any $i\geq1$.
By Theorem \ref{thindep} and choosing $K\geq K_0$ so that we may apply
Lemma \ref{hittrap} with $M>2d$, we can see that for $i\geq1$
\begin{eqnarray*}
\max_{a\in[1/K,K]^{\mathcal{E}}} \ES^a[\tilde{N}_{\tau_i}] &\leq&
\sum_{x\in\Z^d} \max_{a\in[1/K,K]^{\mathcal{E}}}
\PR^a[T_x<\tau_1\mid D=\infty]\\
&\leq&\sum
_{x\in\Z^d} C \| x\|
_{\infty}^{-M/2}<C,
\end{eqnarray*}
where $C$ does not depend on $i\geq1$. A similar inequality holds for $i=0$.

Since $\Delta_n\leq\tau_n= \sum_{i=1}^{n} (\tau_{i}-\tau_{i-1})$, we
see that $N_n\leq\sum_{i=1}^n {\tilde{N}}_{\tau_i}$ which means that
\[
\max_{a\in[1/K,K]^{\mathcal{E}}} \ES^a[N_n]\leq\sum
_{i=1}^n \max_{a\in
[1/K,K]^{\mathcal{E}}}
\ES^a[{\tilde{N}}_{\tau_i}]\leq Cn.
\]

Then, using Lemma \ref{lkjh}, we can prove Theorem \ref{tauu1}.
\end{pf}

\subsection{Law of large numbers}

We can use exactly the same type of proof as in \cite{Shen} to obtain:
%
\begin{proposition}\label{LLN1}
If $E_*[c_*]<\infty$, then there exists $K_0$ such that for any $K\geq
K_0$ we have
\[
\frac{X_n}n \to v =\frac{\ES^{\Pi_K}[X_{\tau_1^{(K)}}]}{\ES^{\Pi
_K}[\tau
_1^{(K)}]} ,\qquad\mbox{$\PR$-a.s. with } v\cdot
\vec{\ell}>0,
\]
where
\[
\Pi_K[\cdot]=\int\nu_K(da)\PR_0^a[
\cdot\mid D=\infty] \quad\mbox{and}\quad \ES^{\Pi_K}[\cdot]:= \int
\nu_K(da) \ES^a_0[ \cdot\mid D=\infty],
\]
where $\nu_K$ is the unique invariant distribution on
$[1/K,K]^{\mathcal
{E}}$ given in Theorem~\ref{thnu}.
\end{proposition}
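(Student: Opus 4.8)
The plan is to run the regeneration-to-law-of-large-numbers argument of~\cite{Shen}; the only genuinely new point is that the hypothesis $E_*[c_*]<\infty$ is needed precisely to make the expected regeneration time finite. Fix $K\geq K_0$ large enough that all of Section~\ref{sect_first_regen} applies, and recall from Proposition~\ref{prop} and Theorems~\ref{th_nu}--\ref{th_nu2} that under $\Pi_K$ the regeneration increments
\[
Y_k=(J_k,Z_k,A_k)=\bigl(\tau^{(K)}_{k+1}-\tau^{(K)}_k,\ X_{\tau^{(K)}_{k+1}}-X_{\tau^{(K)}_k},\ a_{X_{\tau^{(K)}_{k+1}}}\bigr),\qquad k\geq 0,
\]
form a stationary, ergodic Markov chain with initial law $\tilde\nu_K$, while under $\PR$ the shifted chain $(Y_k)_{k\geq 1}$ is absolutely continuous with respect to it. Hence any almost sure statement about the averages of $(Y_k)_{k\geq 0}$ under $\Pi_K$ transfers, after one time shift, to a $\PR$-almost sure statement, the discarded block $[0,\tau^{(K)}_1]$ being harmless since $\tau^{(K)}_1<\infty$ $\PR$-a.s.\ by Proposition~\ref{tau_finite}. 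So it is enough to apply Birkhoff's ergodic theorem to $J_k$ and $Z_k$, which requires their $\Pi_K$-integrability.

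First I would verify integrability of the displacement. The proof of Lemma~\ref{hit_trap} already gives (see~\eqref{expo_step_15}) $\max_{a\in[1/K,K]^{\mathcal{E}}}\PR_0^a[\chi\geq k]\leq Ck^{-M}$ for every $M$, where $\chi$ is the side length of the smallest box containing $\{X_i:i\leq\tau^{(K)}_1\}$; dividing by $\min_a\PR_0^a[D_K=\infty]>0$ yields the same tail under $\PR_0^a[\,\cdot\mid D_K=\infty]$, and since $\abs{X_{\tau^{(K)}_1}}\leq\sqrt d\,\chi$ this gives $\ES^{\Pi_K}[\abs{X_{\tau^{(K)}_1}}]<\infty$, with no moment hypothesis on $c_*$. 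The time integrability is exactly where $E_*[c_*]<\infty$ enters, via Theorem~\ref{tau_1}, which gives $\ES^{\Pi_K}[\Delta_n]\leq C(K)n$. To pass from this to $\ES^{\Pi_K}[\tau^{(K)}_1]<\infty$, observe that under $\Pi_K$ the sequence $(J_k,Z_k)_{k\geq 0}$ is stationary ergodic with $Z_k\cdot\vec\ell\geq 2$ by~\eqref{right_dir}, so with $N(n)=\#\{k\geq1:X_{\tau^{(K)}_k}\cdot\vec\ell\leq n\}$ one has $N(n)/n\to 1/\bar\ell$, $\bar\ell:=\ES^{\Pi_K}[X_{\tau^{(K)}_1}\cdot\vec\ell]\in[2,\infty)$, while $\tau^{(K)}_{N(n)}\leq\Delta_n\leq\tau^{(K)}_{N(n)+1}$ since the regeneration times form a subsequence of the ladder times and $\Delta_n$ is itself a ladder time. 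Therefore $\Delta_n/n\to\bar\tau/\bar\ell$ $\Pi_K$-a.s., where $\bar\tau:=\ES^{\Pi_K}[\tau^{(K)}_1]\in(0,\infty]$, and Fatou's lemma together with $\ES^{\Pi_K}[\Delta_n/n]\leq C(K)$ forces $\bar\tau\leq C(K)\bar\ell<\infty$.

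With both integrability statements in hand, Birkhoff's ergodic theorem (transferred to $\PR$ as above, using $\tau^{(K)}_k=J_0+\sum_{i=1}^{k-1}J_i$ with $J_0=\tau^{(K)}_1<\infty$) gives $\tau^{(K)}_k/k\to\bar\tau$ and $X_{\tau^{(K)}_k}/k\to\bar X:=\ES^{\Pi_K}[X_{\tau^{(K)}_1}]$ $\PR$-a.s., hence $X_{\tau^{(K)}_k}/\tau^{(K)}_k\to v:=\bar X/\bar\tau$ with $v\cdot\vec\ell=\bar\ell/\bar\tau>0$. To interpolate to general times, set $k(n)=\sup\{k:\tau^{(K)}_k\leq n\}$; then $k(n)/n\to1/\bar\tau$ and $\tau^{(K)}_{k(n)}/n\to1$ $\PR$-a.s., and it remains to bound $X_n-X_{\tau^{(K)}_{k(n)}}$. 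For this I would apply Theorem~\ref{tail_tau_2} conditionally on $\mathcal{G}_{j-1}$, uniformly over $a\in[1/K,K]^{\mathcal{E}}$, to see that the side length $\chi_j$ of the box spanned by the $j$-th inter-regeneration piece satisfies $\PR[\chi_j\geq k\mid\mathcal{G}_{j-1}]\leq Ck^{-M}$; Borel--Cantelli then gives $\chi_j/j\to0$ $\PR$-a.s., hence $\abs{X_n-X_{\tau^{(K)}_{k(n)}}}\leq\sqrt d\,\chi_{k(n)+1}=o(n)$. Combining the three limits yields $X_n/n\to v$ $\PR$-a.s.; since the left-hand side does not depend on $K$, the limit $v$ is independent of $K\geq K_0$, and $v\cdot\vec\ell>0$. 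I expect the one delicate point to be the implication $\ES^{\Pi_K}[\Delta_n]\leq C(K)n\ \Rightarrow\ \ES^{\Pi_K}[\tau^{(K)}_1]<\infty$ in the second paragraph; everything else is the standard regeneration machinery of~\cite{Shen}, now legitimate because the tail estimates of Section~\ref{sect_first_regen} (Theorems~\ref{tail_tau_2} and~\ref{BL_2}) hold uniformly in $a\in[1/K,K]^{\mathcal{E}}$.
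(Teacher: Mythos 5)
Your proposal is correct and takes essentially the same route as the paper: it is Shen's regeneration/ergodic-theorem argument built on Proposition~\ref{prop} and Theorems~\ref{th_nu}--\ref{th_nu2}, with the displacement integrability taken from~(\ref{expo_step_15}) and the crucial finiteness $\ES^{\Pi_K}[\tau_1^{(K)}]<\infty$ deduced, exactly as in the paper, from the a.s.\ limit of $\Delta_n/n$ combined with Theorem~\ref{tau_1} (your Fatou step just makes that implication explicit). The only nitpick is geometric: a point of $B(\chi,\chi^{\alpha})$ has norm of order $\chi^{\alpha}$ rather than $\sqrt{d}\,\chi$, but since~(\ref{expo_step_15}) gives tails decaying faster than any polynomial this changes nothing in your integrability and interpolation steps.
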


Let us explain quickly where this result comes from. The regeneration
structure will allow us to obtain a law of large numbers for $X_n/n\to
v$ and $\Delta_n/n\to C_{\infty}$. By a standard inversion argument we
will see that $v \cdot\vec{\ell}=1/C_{\infty}$. Then Theorem~\ref
{tauu1} will imply that $C_{\infty}$ cannot be infinite which means
that $X_n/n$ cannot go to $0$.

\begin{pf}
We will now follow the strategy of proof of Theorem 5.1 in \cite{Shen}
to obtain the result.

First, we notice that by (\ref{expostep15}), we have $\ES^{\Pi
}[\llvert X_{\tau_1}\rrvert]<\infty$. 

Recalling the notations of Proposition \ref{prop}, we know that $Y_i^K$
with initial distribution $\tilde{\nu}$ is stationary and ergodic and
that the law of $(Y_i^K)_{i\geq1}$ under $\PR$ is absolutely continuous
with respect to the law with initial distribution $\tilde{\nu}$.
Therefore using Birkhoff's ergodic theorem (page 341 of \cite{durrett1})
we obtain that for any $f\in L^1(\Gamma,\tilde{\nu})$,
\[
\frac1n \sum_{k=1}^nf(Y_k)
\to_{n\to\infty} \int f d\tilde{\nu},\qquad \mbox{$\PR$-a.s.}
\]

Applying the previous formula to the functions $f(y)=j$ and $f(y)=x$
where $y=(j,z,a)\in\Gamma$, we obtain $\PR$-a.s.
\[
\frac1{n-1}\bigl(\tau_n^{(K)}-\tau_1^{(K)}
\bigr) \to_{n\to\infty} \int\ES^a\bigl[\tau_1^{(K)}
\mid D=\infty\bigr] \nu(da)=\ES^{\Pi_K}\bigl[\tau_1^{(K)}
\bigr]
\]
and
\[
\frac1{n-1}(X_{\tau_n^{(K)}}-X_{\tau_1^{(K)}}) \to_{n\to\infty} \int\ES
^a[X_{\tau_1^{(K)}}\mid D=\infty] \nu(da)=\ES^{\Pi_K}[X_{\tau_1^{(K)}}].
\]

Moreover recalling that, by Proposition \ref{taufinite}, we have
$\tau
_1^{(K)}<\infty$, $\PR$-a.s. we actually see that $\PR$-a.s.
\[
\frac1n \tau_n^{(K)} \to_{n\to\infty}
\ES^{\Pi_K}\bigl[\tau_1^{(K)}\bigr] \quad\mbox{and}\quad
\frac1n X_{\tau_n^{(K)}} \to_{n\to\infty} \ES^{\Pi_K}[X_{\tau_1^{(K)}}].
\]

Now, we may introduce $k_n$, a nondecreasing sequence going to infinity
such that $\tau_{k_n}\leq n<\tau_{k_{n+1}}$. By dividing this equation
by $k_n$ we see that the previous estimate implies that $\PR$-a.s.
\[
\frac{k_n}n \to\frac1 { \ES^{\Pi_K}[\tau_1^{(K)}]}.
\]

Furthermore, we observe that $\PR$-a.s.
\[
\frac{X_n}n =\frac{X_{\tau_{k_n}}}n+\frac{X_n-X_{\tau_{k_n}}}n,
\]
which in view of the two previous equations implies that $\PR$-a.s.
\[
\frac{X_{\tau_{k_n}}}n=\frac{X_{\tau_{k_n}}}{k_n}\frac{k_n}n \to_{n\to
\infty}
\frac{ \ES^{\Pi_K}[X_{\tau_1^{(K)}}]} { \ES^{\Pi_K}[\tau
_1^{(K)}]},
\]
where the right-hand side is always well defined (even if $\ES^{\Pi
}[\tau_1]=\infty$), since $\ES^{\Pi_K} [\llvert X_{\tau
_1^{(K)}}\rrvert
]\in(0,\infty)$. The last assertion follows from Theorem \ref
{tailtau2} and the fact that $X_{\tau_1}\cdot\vec{\ell}>2/\sqrt{d}$.

Moreover we have $\PR$-a.s.
\[
\frac{\llvert X_n-X_{\tau_{k_n}}\rrvert}n \leq\frac{\tau
_{k_{n}+1}-\tau_{k_n}}n = \frac{\tau_{k_{n}+1}}{k_{n}+1}
\frac{k_{n}+1}n-\frac{\tau_{k_n}}{k_n} \frac{k_n}n \to_{n\to\infty} 0.
\]

The three last equations imply that $\PR$-a.s.
%
\begin{equation}
\label{ozx} \lim_{n\to\infty}\frac{X_n}n = v =
\frac{\ES^{\Pi}[X_{\tau
_1}]}{\ES
^{\Pi}[\tau_1]},
\end{equation}
even in the case where $\ES^{\Pi}[\tau_1]=\infty$.

We introduce $k_n'$, a nondecreasing sequence going to infinity such
that $\tau_{k_n'}\leq\Delta_n<\tau_{k_{n}'+1}$. This means that
\[
X_{\tau_{k_n'}}\cdot\vec{\ell} \leq X_{\Delta_n}\cdot\vec{\ell}
<X_{\tau_{k_{n}'+1}}\cdot\vec{\ell}
\]
and in particular,
\[
X_{\tau_{k_n'}}\cdot\vec{\ell} \leq n+1 \quad\mbox{and}\quad X_{\tau
_{k_{n}'+1}}\cdot
\vec{\ell}\geq n.
\]
Dividing the left equation by $\tau_{k_n'}$ and the right one by $\tau
_{k_{n}'+1}$ we can take the limit as $n$ goes to infinity using (\ref
{ozx}), which yields $\PR$-a.s.
\[
\limsup_{n\to\infty} \frac n {\tau_{k_{n}'+1}} \leq
\frac{\ES
^{\Pi
}[X_{\tau_1}\cdot\vec{\ell}]}{\ES^{\Pi}[\tau_1]} \quad\mbox{and}\quad \liminf
_{n\to\infty} \frac n{
\tau_{k_{n}'}} \geq\frac{\ES^{\Pi
}[X_{\tau
_1}\cdot\vec{\ell}]}{\ES^{\Pi}[\tau_1]}.
\]

Now, we see that
\begin{eqnarray*}
\frac{\tau_{k_{n+1}'}-\tau_{k_n'}}n & = & \frac{\tau
_{k_{n}'+1}}{k_{n}'+1} \frac{k_{n}'+1}n-\frac{\tau_{k_n'}}{k_n'}
\frac
{k_n'}n
\\
&=& \bigl(\ES^{\Pi}[\tau_1]+o(1) \bigr) \biggl(
\frac{k_n'}n+o(1) \biggr)- \bigl(\ES^{\Pi}[\tau_1]+o(1)
\bigr)\frac{k_n'}n
\\
&=& \biggl(\frac{k_n'}n+o(1) \biggr)o(1),
\end{eqnarray*}
where the right-hand side goes to zero because $k_n'\leq n$ (since
$\Delta_n<\tau_{n}$), so that the two previous equations imply that
\[
\lim_{n\to\infty} \frac n {\tau_{k_{n}'}} =
\frac{\ES^{\Pi
}[X_{\tau
_1}\cdot\vec{\ell}]}{\ES^{\Pi}[\tau_1]}.
\]

Recalling the definition of the sequence $k_n'$, this implies $\PR$-a.s.
%
\begin{equation}
\label{ozxc} \lim_{n\to\infty} \frac{\Delta_n}n =
\frac{\ES^{\Pi}[\tau
_1]}{\ES^{\Pi
}[X_{\tau_1}\cdot\vec{\ell}]},
\end{equation}
even in the case where $\ES^{\Pi}[\tau_1]=\infty$.

If $E_*[c_*]<\infty$, then Theorem \ref{tauu1} (and the fact that $\{
0\in\mathrm{GOOD}_K\}$ has positive probability) imply that the almost
sure limit in (\ref{ozxc}) cannot be infinite. This means that $\ES
^{\Pi
}[\tau_1]<\infty$. Since $X_{\tau_1}\cdot\vec{\ell}>2/\sqrt{d}$, this
means that (\ref{ozx}) implies that \mbox{$v \cdot\vec{\ell} > 0$}.
\end{pf}

\begin{remark}
Interestingly, we do not know of any direct way of showing that $\ES
^{\Pi}[\tau_1]<\infty$.
\end{remark}

\section{Zero-speed regime}
\label{sectzerospeed}
\subsection{Characterization of the zero-speed regime}

We set $\mathcal{A}$ to be the set of vertices: $0$, $e_1$, $e_1\pm
e_i$, $2e_1 \pm e_i$, $2e_1\pm2e_i$, $3e_1\pm2e_i$, $3e_1\pm e_i$,
$4e_1\pm e_i$, for all $i\in[2,d]$, $5e_1$, $6e_1$ and the events
\[
A=\{\mbox{any $x\in\mathcal{A}$ is $6e_1$-open}\} \quad\mbox{and}\quad B=\{
6e_1 \mbox{ is good}\},
\]
where we recall that a vertex is called $x$-open if it is open in
$\omega_x$ coinciding with $\omega$ on all edges, but those that are
adjacent to $x$ which are normal in~$\omega_x$.

Note that $A$ and $B$ are independent and independent of $c_*([2e_1,3e_1])$.

\begin{lemma}
\label{LBa}
If $E_*[c_*]=\infty$, then $\min_{a\in[1/K,K]^{\mathcal{E}}}
E^a[\tau
_1\mid D=\infty] =\infty$.
\end{lemma}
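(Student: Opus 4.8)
The plan is to exhibit a trapping effect at a single edge of high conductance, namely $e=[2e_1,3e_1]$. Since $\PR^a[D=\infty]\le1$, it suffices to prove $E^a[\tau_1\1{D=\infty}]=\infty$ for every $a\in[1/K,K]^{\mathcal E}$. I work on the environment event $A\cap B$; as already observed, $0$ is good there, so $D=D'$ and $\{D'=\infty\}=\{D=\infty\}$ on $A\cap B$. The role of the set $\mathcal A$ is that on $A\cap B$ every edge incident to $2e_1$ or $3e_1$ other than $e$ is $K$-normal — the lateral ones because of $A$, and $[3e_1,4e_1]$ because $B$ forces $4e_1$ to be open — and the same holds for $[0,e_1]$ and $[e_1,2e_1]$; only $c_*(e)$ itself is left free, and $A\cap B$ is independent of $c_*(e)$.

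Fix $\omega\in A\cap B$. Forcing the walk to begin $0\to e_1\to 2e_1$ has quenched probability $\ge c(K,\lambda)>0$ by Remark~\ref{fake_UE} and~(\ref{def_conduct}). On $\{X_1=e_1,X_2=2e_1\}\cap\{D'=\infty\}$ the walk makes no visit to $2e_1$ after time $\tau_1$: indeed $X_{\tau_1}=v+2e_1$ where $v$ is a new maximum of the trajectory, so on $\{D'=\infty\}$ one has $X_{\tau_1}\cdot\vec\ell\ge2e_1\cdot\vec\ell$, and $X_{\tau_1}\ne2e_1$ because $2e_1$ is not $K$-open on $A$ while $X_{\tau_1}$ is $K$-open; hence after $\tau_1$ the walk stays strictly above level $2e_1\cdot\vec\ell$. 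Consequently $\tau_1\ge N:=\sum_{i\ge0}\1{X_i=2e_1}$ on this event, and by the strong Markov property at time $2$,
\[
E^\omega_0[\tau_1\1{D=\infty}]\ \ge\ c(K,\lambda)\,E^\omega_{2e_1}\!\big[N\1{\hat D=\infty}\big],\qquad \hat D:=\inf\{n\ge1:\,X_n\cdot\vec\ell\le0\}.
\]

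The quantity $E^\omega_{2e_1}[N\1{\hat D=\infty}]$ is then estimated by an elementary one-dimensional computation on $\{2e_1,3e_1\}$: with $G:=P^\omega_{2e_1}[\hat D=\infty,\,T^+_{2e_1}=\infty]$ and $H:=P^\omega_{2e_1}[T^+_{2e_1}<\hat D]$ one has $E^\omega_{2e_1}[N\1{\hat D=\infty}]=G/(1-H)^2$. Since $2e_1$ has no neighbour at level $\le0$, $1-H$ is at most the probability that the first step leaving $2e_1$, or leaving $3e_1$, avoids $e$, which by~(\ref{def_conduct}) and the normality of the non-$e$ edges at $2e_1,3e_1$ is $\le C(K,\lambda)/c_*(e)$; and $G\ge p^\omega(2e_1,3e_1)\,p^\omega(3e_1,4e_1)\,\Psi(\omega)$, where $\Psi(\omega):=P^\omega_{4e_1}[\,X_n\cdot\vec\ell>3e_1\cdot\vec\ell\text{ for all }n\ge1\,]$. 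Restricting to $\{c_*(e)\ge1\}$ (which is harmless, since $E_*[c_*\1{c_*\ge1}]=\infty$), one may bound $\pi^\omega(2e_1),\pi^\omega(3e_1)\le C(K,\lambda)\,c^\omega(e)$ via Remark~\ref{fake_UE}, so that the exponential prefactors coming from~(\ref{def_conduct}) cancel and
\[
E^\omega_0[\tau_1\1{D=\infty}]\ \ge\ c(K,\lambda)\,c_*(e)\,\Psi(\omega)\qquad\text{on }A\cap B\cap\{c_*(e)\ge1\}.
\]
Crucially, on the event defining $\Psi(\omega)$ the walk never reaches level $\le 3e_1\cdot\vec\ell$, hence never touches $2e_1$ or $3e_1$ and never uses the edge $e$: thus $\Psi(\omega)$, like $\1{A\cap B}$, does not depend on $c_*(e)$.

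Taking ${\bf E}^a$ and using this independence,
\[
E^a[\tau_1\1{D=\infty}]\ \ge\ c(K,\lambda)\;E_*\!\big[c_*\1{c_*\ge1}\big]\cdot{\bf E}^a\!\big[\1{A\cap B}\,\Psi(\omega)\big].
\]
Here $E_*[c_*\1{c_*\ge1}]=\infty$ since $E_*[c_*]=\infty$, while ${\bf E}^a[\1{A\cap B}\Psi(\omega)]>0$: the event $A\cap B$ has positive probability for $K\ge K_0$, on it $4e_1$ is good, and the averaged probability of never backtracking from a good vertex is positive, which is precisely the content of Lemma~\ref{pos_escape} and Lemma~\ref{this_is_boring}, here used from $4e_1$ under the additional finite-range conditioning $A\cap B$ (note that $\{X_n\cdot\vec\ell>3e_1\cdot\vec\ell\ \forall n\}\supseteq\{X_n\cdot\vec\ell>4e_1\cdot\vec\ell\ \forall n\}$, the latter being a translate of $\{D'=\infty\}$). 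This yields $E^a[\tau_1\1{D=\infty}]=\infty$ for every $a$, hence the lemma. The main obstacle is this last positivity statement — transporting the positive-escape estimate of Lemma~\ref{pos_escape}, which is stated at the origin and conditioned only on ``$0$ open good'', to the vertex $4e_1$ and under the extra conditioning on $A\cap B$, and checking that the finitely many frozen edges do not spoil it; the argument is the usual one (follow the open directed path out of a large box and then invoke the exponential backtracking bound), but needs care. A subsidiary point is the verification of $\tau_1\ge N$ — via the form of the $\mathcal M$-points and the non-openness of $2e_1$ — together with the birth–death estimates for $G$ and $H$.
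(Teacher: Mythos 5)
Your proposal is correct in substance and is built on the same mechanism as the paper's proof: the single high-conductance edge $e=[2e_1,3e_1]$ sealed inside the normal tube $\mathcal{A}$, the events $A$ and $B$, the independence of $c_*(e)$ from the rest of the data, and Lemma~\ref{pos_escape} to guarantee a positive escape probability. Where you differ is in how the factor $c_*(e)$ is extracted: the paper forces the four steps $0\to e_1\to 2e_1\to e_1\to 2e_1$, lower-bounds $\tau_1$ by the sojourn time in $\{2e_1,3e_1\}$ after time $4$ (quenched expectation $\geq c\,c_*(e)$), and then pays a fixed cost to route the walk to $4e_1$ through $\mathcal{A}\setminus\{0\}$ before invoking $P^{\omega}_{4e_1}[D'=\infty]$; you instead lower-bound $\tau_1$ by the total number $N$ of visits to $2e_1$ and compute $E^{\omega}_{2e_1}[N\1{\hat D=\infty}]=G/(1-H)^2\geq c\,c_*(e)\Psi(\omega)$, folding the escape into $G$ via $2e_1\to 3e_1\to 4e_1$. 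Your bookkeeping avoids the paper's extra oscillation steps (whose role is precisely to ensure the trapping happens before $\tau_1$), at the price of the Green-function identity and of carrying $\Psi$ inside the environment expectation.

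Two caveats. First, your justification that $X_{\tau_1}\neq 2e_1$ (namely that $2e_1$ is not $K$-open on $A$) is false on $A\cap\{c_*(e)\in[1/K,K]\}$; this is harmless and repairable in several ways: restrict to $\{c_*(e)>K\}$ (still $E_*[c_*\1{c_*>K}]=\infty$, and then $2e_1$ is indeed closed), or observe that the definition of $\mathcal{M}^{(K)}$ gives $X_{\tau_1}\cdot\vec{\ell}=X_{\tau_1-2}\cdot\vec{\ell}+2e_1\cdot\vec{\ell}>2e_1\cdot\vec{\ell}$ since $X_{\tau_1-2}$ lies strictly above the level $M_{N-1}\geq 0$, or simply use $\tau_1\geq N-1$, which is just as good. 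Second, the positivity ${\bf E}^a[\1{A\cap B}\Psi]>0$, which you leave as a sketch, is exactly the point where the paper concludes: it asserts that $\1{A}$, $c_*(e)$ and $\1{B}P^{\omega}_{4e_1}[D'=\infty]$ are ${\bf P}^a$-independent and then invokes Lemma~\ref{pos_escape}, ${\bf P}[0\text{ is good}]>0$ and translation invariance; since $\Psi(\omega)\geq P^{\omega}_{4e_1}[X_n\cdot\vec{\ell}>4e_1\cdot\vec{\ell}\ \text{for all }n\geq 1]$, your version can be completed along the same lines, so the step you flag as the main obstacle requires nothing deeper than what the paper itself uses there.
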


The typical configuration that will slow the walk down is depicted in
Figure \ref{fig5}: the walk is likely to reach the edge $[2e_1,3e_1]$ and then
%
\begin{figure}

\includegraphics{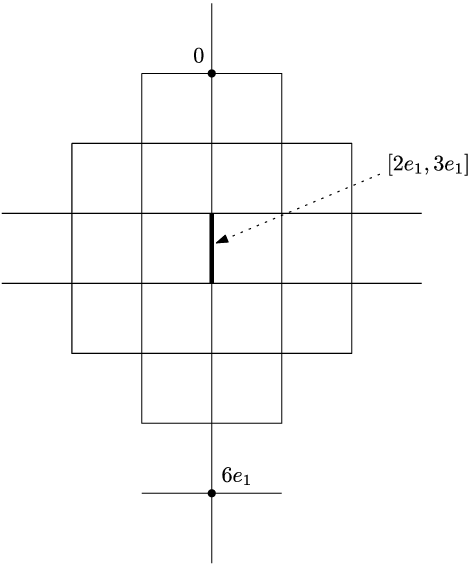}

\caption{The typical configurations slowing the walk down. All the
edges depicted are normal and the bold one has unusually high
conductance.} \label{fig5}
\end{figure}
stay there for a long time. More precisely, we will consider the
following chain of events on $A\cap B$:
\begin{longlist}[(3)]
\item[(1)] $X_1=e_1, X_2=2e_1, X_3=e_1, X_4=2e_1$, which forces $\tau
_1$ to be after time~4.
\item[(2)] From there we go back and forth on $[2e_1,3e_1]$, the
quenched expected time spent on this edge is of order
$c_*([2e_1,3e_1])$. With the previous point, this implies that $\tau_1$
is larger than the time spent on $[2e_1,3e_1]$ and so has, in some
sense, an infinite annealed expectation.
\item[(3)] After leaving the edge $[2e_1,3e_1]$ the walk goes to $6e_1$
before reaching any point of $\partial A$ and from there never backtracks.
\end{longlist}

In the series of events described above, we have that $D=\infty$, and
in some sense, we have $\tau_1$ has infinite expectation. Moreover,
under the quenched measure all the three events described are independent.

\begin{pf}
We have
%
\begin{equation}
\label{zf} \ES^a[\tau_1 \mid D=\infty]\geq
\ES^a\bigl[{\mathbf1} {\{D=\infty\} }\tau_1\bigr] \geq
\ES^a\bigl[{\mathbf1} {\{A,B\}} {\mathbf1} {\{D=\infty\}}
\tau_1\bigr].
\end{equation}

We may notice that on $A\cap B$, if:
\begin{longlist}[(3)]
\item[(1)] $X_1=e_1, X_2=2e_1,X_3=e_1, X_4=2e_1$ (hence $\tau_1\geq4$),
\item[(2)] $T_{6e_1}\circ\theta_{T_{\Z^d\setminus\{2e_1,3e_1\}
}\circ
\theta_4}\leq T_{\partial(\mathcal{A}\setminus\{0\})}\circ\theta
_{T_{\Z^d\setminus\{2e_1,3e_1\}}\circ\theta_4}$,
\item[(3)] $D\circ\theta_{T_{6e_1}} =\infty$,
\end{longlist}
then we have $D=\infty$ and $\tau_1\geq T_{\Z^d\setminus\{
2e_1,3e_1\}
}\circ\theta_4$. Now, we aim at estimating the three different events
and $T_{\Z^d\setminus\{2e_1,3e_1\}}\circ\theta_4$, which will allow
us to give a lower-bound in (\ref{zf}).

On $A\cap B$, we see, by Remark \ref{fakeUE}, that we have
%
\begin{equation}
\label{poiq} P^{\omega}[X_1=e_1,
X_2=2e_1, X_3=e_1,
X_4=2e_1]\geq\kappa_0^4
\end{equation}
and moreover on $A\cap B$
%
\begin{equation}
\label{poiw} P^{\omega}_{2e_1}[T_{6e_1}\circ
\theta_{T_{\Z^d\setminus\{
2e_1,3e_1\}
}\circ\theta_4}\leq T_{\partial(\mathcal{A}\setminus\{0\})}\circ\theta
_{T_{\Z^d\setminus\{2e_1,3e_1\}}\circ\theta_4}] \geq
\kappa_0^7,
\end{equation}
which follows from Remark \ref{fakeUE} and the fact that, on $A\cap
B$, for any neighbor of $2e_1$ or $3e_1$, there exists an open
nearest-neighbor path of length at most $7$ in $\mathcal{A}\setminus\{
0\}$ to $6e_1$.

Using Remark \ref{fakeUE} again, we may see that on $A\cap B$, we
have $P_{2e_1}^{\omega}[X_1\neq3e_1]\leq C/c_*([2e_1,3e_1])$ and
$P_{3e_1}[X_1\neq2e_1]\leq C/c_*([2e_1,3e_1]) $. This implies that
%
\begin{equation}
\label{za} P^{\omega}_{2e_1}[T_{\Z^d\setminus\{2e_1,3e_1\}} \geq n] \geq
\bigl(1- C/c_*\bigl([2e_1,3e_1]\bigr)
\bigr)^n,
\end{equation}
so
%
\begin{equation}
\label{exittrap} E^{\omega}_{2e_1}[T_{\Z^d\setminus\{2e_1,3e_1\}}
]\geq c c_*\bigl([2e_1,3e_1]\bigr).
\end{equation}

Using the series of events below (\ref{zf}) and Markov's property (at
times $4$, $T_{\Z^d\setminus\{2e_1,3e_1\}}\circ\theta_4$ and
$T_{6e_1}$) along with (\ref{exittrap}), (\ref{poiq}) and (\ref{poiw})
we may see
%
\begin{eqnarray}
\label{tyu} &&\ES^a\bigl[{\mathbf1} {\{A,B\}} {\mathbf1} {\{D=\infty
\}}\tau_1\bigr]
\nonumber\\
&&\qquad\geq c {\mathbf E}^a\bigl[{\mathbf1} {\{A,B
\}}E^{\omega}_{2e_1}[T_{\Z
^d\setminus\{
2e_1,3e_1\}}]P_{6e_1}^{\omega}[D=
\infty] \bigr]
\\
&&\qquad\geq  c {\mathbf E}^a\bigl[{\mathbf1} {\{A,B\}} c_*
\bigl([2e_1,3e_1]\bigr)P_{6e_1}^{\omega}[D=
\infty] \bigr],\nonumber
\end{eqnarray}
which is a consequence of the estimates of the events defined
after (\ref{zf}).

We may now notice that ${\mathbf1}{\{A\}}$, $c_*([2e_1,3e_1])$ and
${\mathbf1}{\{B\}}P_{6e_1}^{\omega}[D=\infty]$ are ${\mathbf
P}^a$-independent, so that
\[
\ES^a\bigl[{\mathbf1} {\{A,B\}} {\mathbf1} {\{D=\infty\}}
\tau_1\bigr]\geq{\mathbf P}^a[A] {\mathbf E}^a
\bigl[c_*\bigl([2e_1,3e_1]\bigr)\bigr]{\mathbf
E}^a\bigl[{\mathbf1} {\{B\}}P_{6e_1}^{\omega
}[D=
\infty]\bigr].
\]

We have\vspace*{1pt} $\min_{a\in[1/K,K]^{\mathcal{E}}} {\mathbf P}^a[A] \geq c>0$,
${\mathbf E}^a[c_*([2e_1,3e_1])]={\mathbf E}[c_*([2e_1,3e_1])]=\infty$, and by
translation invariance
\[
{\mathbf E}^a\bigl[{\mathbf1} {\{B\}}P_{6e_1}^{\omega}[D=
\infty]\bigr]={\mathbf E}\bigl[{\mathbf1} {\{0\mbox{ is good}\}}P^{\omega}[D=
\infty]\bigr]>0
\]
by Lemma \ref{posescape} and the fact that ${\mathbf P}[0\mbox{ is
good}]>0$. This means that
\[
{\mathbf E}^a\bigl[{\mathbf1} {\{A,B\}} c_*\bigl([2e_1,3e_1]
\bigr)P_{6e_1}^{\omega
}[D=\infty] \bigr]=\infty.
\]

Hence, by (\ref{zf}) and (\ref{tyu}), we have
\[
\min_{a\in[1/K,K]^{\mathcal{E}}} \ES^a[\tau_1 \mid D=
\infty]=\infty.
\]
\upqed\end{pf}

\begin{remark}
\label{tauinf}
Using a reasoning similar to the previous proof but using a normal
edge surrounded by edges with small conductances (see Figure \ref{fig1}), we may
show that if $P_*[1/c_*\geq x]\geq c \ln(x)^{-\epsilon}$ for any
$\epsilon>0$, then
\[
\ES_0\bigl[\tau_1 \ln(\tau_1)^{\epsilon}
\mid D=\infty\bigr]=\infty
\]
for any $\epsilon>0$. Essentially, without any assumption on the tail
of $c_*$ at 0, we cannot expect any stronger integrability of
regeneration times than the first moment being finite.
\end{remark}

\begin{remark}
\label{tcl}
Using a reasoning similar to the previous proof and a modified version
using a normal edge surrounded by edges with $4d-2$ small conductances
(see Figure \ref{fig1}), we may show that
\[
\mbox{if $P_*[1/c_*\geq x]\geq x^{-1/(4d-2)+\epsilon}$ or $P_*[c_*\geq
x] \geq
x^{-2+\epsilon}$}\qquad \mbox{for some $\epsilon>0$,}
\]
then
\[
\ES_0\bigl[\tau_1^2 \mid D=\infty\bigr]=
\infty,
\]
which is a strong indication that the annealed central limit theorem
does not hold.
\end{remark}

The previous lemma implies:
%
\begin{proposition}
\label{LB}
If $E_*[c_*]=\infty$, then $\lim X_n/n=\vec0$ $\PR$-a.s.
\end{proposition}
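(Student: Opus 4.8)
The plan is to deduce the statement directly from the law-of-large-numbers identity established during the proof of Proposition~\ref{LLN1}. The point is that equation~(\ref{ozx}),
\[
\lim_{n\to\infty}\frac{X_n}{n}=\frac{\ES^{\Pi_K}[X_{\tau_1^{(K)}}]}{\ES^{\Pi_K}[\tau_1^{(K)}]}\qquad \PR\text{-a.s.,}
\]
is obtained there purely from the ergodicity of the regeneration chain (Theorem~\ref{th_nu2}) and Birkhoff's theorem applied to the stationary blocks $Z_i=X_{\tau_{i+1}}-X_{\tau_i}$ and $J_i=\tau_{i+1}-\tau_i$; in particular its derivation does not use the hypothesis $E_*[c_*]<\infty$, and it remains valid with the convention $\ES^{\Pi_K}[\tau_1^{(K)}]=\infty$, in which case the right-hand side is $\vec 0$. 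Thus it suffices to verify two things: that $\ES^{\Pi_K}[|X_{\tau_1^{(K)}}|]<\infty$, so the numerator is a finite vector, and that $\ES^{\Pi_K}[\tau_1^{(K)}]=\infty$ when $E_*[c_*]=\infty$.

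First I would record the finiteness of the numerator. Fix $K\geq K_0$ large enough that the tail bound~(\ref{expo_step_15}) holds with an exponent $M>\alpha+1$, where $\chi$ denotes the side of the smallest box $B(\chi,\chi^\alpha)$ containing $\{X_i:0\leq i\leq\tau_1\}$. Since $X_{\tau_1}\in B(\chi,\chi^\alpha)$ we have $|X_{\tau_1}|\leq C\chi^\alpha$, and because $\PR_0^a[D_K=\infty]$ is bounded below uniformly in $a\in[1/K,K]^{\mathcal{E}}$ by Lemma~\ref{pos_escape}, we get
\[
\ES^{\Pi_K}[|X_{\tau_1^{(K)}}|]=\int\nu_K(da)\,\ES^a[|X_{\tau_1^{(K)}}|\mid D_K=\infty]<\infty .
\]
This is exactly the observation made at the start of the proof of Proposition~\ref{LLN1}.

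Next, the divergence of the denominator follows by integrating Lemma~\ref{LBa}. Indeed,
\[
\ES^{\Pi_K}[\tau_1^{(K)}]=\int\nu_K(da)\,\ES^a[\tau_1^{(K)}\mid D_K=\infty]\ \geq\ \min_{a\in[1/K,K]^{\mathcal{E}}}\ES^a[\tau_1^{(K)}\mid D_K=\infty]=\infty
\]
under the hypothesis $E_*[c_*]=\infty$; here the minimum being infinite means $\ES^a[\tau_1^{(K)}\mid D_K=\infty]=\infty$ for every $a$, so the integral is genuinely $+\infty$. Combining this with the previous paragraph and~(\ref{ozx}) yields $\lim_n X_n/n=\ES^{\Pi_K}[X_{\tau_1^{(K)}}]/\infty=\vec 0$, $\PR$-a.s., which is the claim.

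The genuine content is all in Lemma~\ref{LBa} (already proved): a single high-conductance edge $[2e_1,3e_1]$, which the walk reaches with positive probability in a way compatible with $\{D_K=\infty\}$, detains it for a time of order $c_*([2e_1,3e_1])$, and $E_*[c_*]=\infty$ makes this expectation infinite. The only point requiring a little vigilance in the present deduction is the use of~(\ref{ozx}) without a priori integrability of $\tau_1$; as in~\cite{Shen}, this is legitimate because the polynomial tail~(\ref{expo_step_15}) forces $\max_{i\le n}\chi_i=o(n^{\epsilon})$ a.s.\ for every $\epsilon>0$, so interpolating the block law of large numbers over $\tau_{N(m)}\le m<\tau_{N(m)+1}$ --- with $N(m)/m\to 0$ since $\tau_n/n\to\infty$ --- upgrades convergence of $X_{\tau_n}/n$ to $X_m/m\to\vec 0$.
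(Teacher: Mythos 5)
Your proposal is correct and follows essentially the same route as the paper: finiteness of $\ES^{\Pi_K}[\abs{X_{\tau_1^{(K)}}}]$ from the tail bound~(\ref{expo_step_15}), divergence of $\ES^{\Pi_K}[\tau_1^{(K)}]$ by integrating Lemma~\ref{LBa}, and then Birkhoff's theorem for the regeneration blocks together with the Shen-style interpolation between regeneration times to conclude $X_n/n\to\vec 0$. The paper phrases the last step as $(X_{\tau_n}-X_{\tau_1})\cdot\vec{\ell}/n$ converging while $(\tau_n-\tau_1)/n\to\infty$ and then defers to Theorem 5.1 of~\cite{Shen}, which is exactly the interpolation you spell out.
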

\begin{pf}
By Lemma \ref{LBa}, we see that
\[
\ES^{\Pi}[\tau_1]=\infty,
\]
%
and then (\ref{ozx}) implies that
%
\[
\lim_{n\to\infty} \frac{X_n}n=\vec0.
\]
\upqed\end{pf}

\subsection{\texorpdfstring{Lower-bound on the polynomial order of $\Delta_n$}
{Lower-bound on the polynomial order of Delta n}}

The trap described in Figure \ref{fig5} is the most efficient when conductances
have heavy tails. Actually, before~$\Delta_n$, the walk will typically
have encountered such a trap associated with a high conductances of
roughly $n^{1/\gamma}$ which, just on it's own, will be responsible for
a sharp lower bound on $\Delta_n$.\vadjust{\goodbreak}
%
\begin{lemma}
\label{LB1}
If $-\lim\frac{\ln P_*[c_*>n]}{\ln n}=\gamma<1$, we have
\[
\liminf\frac{\ln\Delta_n}{\ln n} \geq1/\gamma,\qquad \PR\mbox{-a.s.}
\]
\end{lemma}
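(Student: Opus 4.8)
## Proof strategy for Lemma 6.1

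The plan is to exploit the second type of trap — a normal edge $[2e_1,3e_1]$ whose exit conductances are small — to show that, with high enough probability, the walk at regeneration time $\tau_k$ lands next to such a trap and is then delayed for a polynomially large time, forcing $\Delta_n$ to be at least of order $n^{1/\gamma}$. The key mechanism is that under the tail hypothesis $P_*[c_*>t]=t^{-\gamma+o(1)}$, the reciprocal $1/c_*$ is heavy-tailed at $0$ with index $\gamma$, so $\min_{i=1,\dots,4d-2}1/c_*^{(i)}$ has a tail $P[\min 1/c_*^{(i)}>t]=t^{-\gamma+o(1)}$, and a site surrounded by such small conductances traps the walk for a \text{Geom} time of expectation $t$. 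The idealized trap $X_1=\text{Geom}((1/c_*)\wedge 1)$ from the introduction is exactly the relevant heuristic.

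First I would set up the following. For each regeneration time $\tau_k$, the site $X_{\tau_k}$ is followed by a fresh (conditioned on $D=\infty$) stretch of environment; by the regeneration structure (Theorem~\ref{th_indep}, Proposition~\ref{prop}) the increments $\tau_{k+1}-\tau_k$ are i.i.d.-like. I would define, for a fixed large $K$ and a threshold $t$, the event that in the block between $\tau_k$ and $\tau_{k+1}$ the walk visits a site $x$ adjacent to an edge configuration realizing a trap with exit rate $\asymp 1/t$ (all $4d-2$ edges touching the two trap endpoints, other than the central one, have conductance $< 1/t$), and that the walk actually enters this trap. Using Remark~\ref{fake_UE} and an argument parallel to the proof of Lemma~\ref{LBa}/Lemma~\ref{LBa}'s equations (\ref{za})–(\ref{exit_trap}), once the walk sits on such a trap it stays for a \text{Geom} random variable whose mean is $\asymp t$, hence with probability bounded below it stays for time $\geq t$. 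The probability that such a trap configuration occurs at a prescribed good location visited by the walk is $\asymp P[\min_i 1/c_*^{(i)} > t] = t^{-\gamma - o(1)}$, and by directional transience (Proposition~\ref{tau_finite}) and the fact that the walk necessarily passes near $\Omega(m)$ distinct good sites in the first $m$ regeneration blocks, the probability that none of the first $m$ blocks contains such a trap-and-entrapment event is at most $(1 - c\, t^{-\gamma-o(1)})^{cm}$.

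Now I would choose the scales: fix $\epsilon>0$, take $t = t(m) = m^{(1-\epsilon)/\gamma}$ — wait, more carefully, I want the trap to cost time $\geq n$ where $n$ is the level; so given $n$, set $t = n$ and $m = \lfloor n^{1/\gamma + \epsilon}\rfloor$ regeneration blocks. Then $m \cdot t^{-\gamma-o(1)} = n^{1/\gamma+\epsilon} \cdot n^{-\gamma(1/\gamma)-o(1)} = n^{\gamma\epsilon - o(1)} \to \infty$, so $\PR[\text{no trap of depth} \geq n \text{ among first } m \text{ blocks}] \leq \exp(-c\, n^{\gamma\epsilon/2})$, summable in $n$ along a polynomial subsequence. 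On the complementary event, there is some $k \leq m$ such that the walk spends time $\geq n$ inside a trap located within the first $m$ regeneration blocks; since $X_{\tau_m}\cdot\vec\ell \geq 2m/\sqrt d \geq n^{1/\gamma}$ for large $n$ (each regeneration advances the level by at least $2$), and this happens before level $X_{\tau_m}\cdot\vec\ell$ is reached, we get $\Delta_{\lceil n^{1/\gamma}\rceil} \geq$ (the trap time) $\geq n$, i.e. $\Delta_N \geq N^{\gamma - o(1)}$ where $N = \lceil n^{1/\gamma}\rceil$; re-indexing, $\liminf \ln\Delta_N/\ln N \geq 1/\gamma - \epsilon$ along the subsequence, and monotonicity of $\Delta$ in its index plus Borel–Cantelli upgrades this to all $N$. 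Letting $\epsilon\to 0$ finishes it.

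The main obstacle, I expect, is making precise the claim that the walk actually \emph{enters} a trap with the required probability — i.e., that among the $\Omega(m)$ good sites visited in the first $m$ regeneration blocks, a positive fraction are "entry points" to potential traps in the sense that conditionally on the (mostly unexposed) environment ahead, placing a $\min_i 1/c_*^{(i)}$-type trap there and having the walk step into its central edge has probability bounded below, uniformly. This requires care with the conditioning induced by the regeneration construction: one must argue, as in the proof of the Doeblin condition and of Lemma~\ref{LBa}, that the relevant edges (the central edge $[2e_1,3e_1]$ shifted to the trap location, together with its surrounding small-conductance edges) are independent of the event that the walk reaches the entry point and of the event $\{D=\infty\}$ — which holds because $\{D=\infty\}$ and "the walk reaches $x$" depend only on edges in $\mathcal{H}^-$ of the entry level and on the $K$-good backbone, while the trap edges can be chosen in $\mathcal{H}^+$. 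Given that independence, the estimate $\ES^a[\1{\text{walk enters trap of depth}\geq t \text{ at block }k}] \geq c\, t^{-\gamma-o(1)}$ follows from Remark~\ref{fake_UE} exactly as equation (\ref{exit_trap}) was derived, and the independence across blocks comes from Theorem~\ref{th_indep}.
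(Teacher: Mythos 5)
There is a genuine gap, and it is at the heart of your mechanism: you have read the hypothesis backwards. The assumption $-\lim \ln P_*[c_*>n]/\ln n=\gamma<1$ controls the \emph{upper} tail of $c_*$ (abundance of abnormally \emph{large} conductances); it says nothing about the lower tail $P_*[c_*<1/t]$, hence nothing about the tail of $1/c_*$ or of $\min_i 1/c_*^{(i)}$. Your claim that a normal edge surrounded by conductances smaller than $1/t$ occurs with probability $t^{-\gamma-o(1)}$ is therefore unjustified; indeed the law could satisfy $c_*\geq 1$ a.s.\ while still having the prescribed $\gamma$-tail at infinity, in which case your traps simply do not exist, yet the lemma must (and does) still hold. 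Moreover, even when such ``second-type'' traps do exist, the walk can only enter one by crossing an edge of conductance $<1/t$, so the entry probability is of order $1/t$ rather than bounded below -- this cancels the holding-time gain and is precisely why the paper explains that this trap type is too weak to produce polynomial slowdown (see the discussion after Theorem~\ref{the_theorem} and Remark~\ref{tau_inf}). Your ``main obstacle'' paragraph identifies the entry step as delicate but assumes it can be made uniformly positive; it cannot.

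The correct mechanism, and the one the paper uses, is the \emph{first} trap type: an edge of abnormally \emph{high} conductance. Right after a regeneration the walk steps onto such an edge with probability bounded below (no small-conductance barrier to cross, by Remark~\ref{fake_UE}), and once there it oscillates across the strong edge for a geometric time of mean of order $c_*(e)$, exactly as in~(\ref{za})--(\ref{exit_trap}) in the proof of Lemma~\ref{LBa}. Since $P_*[c_*\geq t]=t^{-\gamma+o(1)}$, this yields $\min_a \PR_0^a[\tau_1\geq n\mid D=\infty]\geq c\,n^{-\gamma(1+\epsilon_1)}$. The rest of your architecture (i.i.d.-like regeneration blocks via Theorem~\ref{th_indep}, at least one block exceeding $n^{1/\gamma-\epsilon}$ among the first $cn$ blocks, together with $\tau_{cn}\leq\Delta_n$ with high probability from the law of large numbers for $X_{\tau_n}\cdot\vec\ell$) is close in spirit to the paper's proof and would go through once the tail estimate is replaced by this high-conductance one; with your estimate as stated, the key probability bound fails and the argument does not close.
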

\begin{pf}
Using Theorem \ref{tailtau2},
\[
\max_{a\in[1/K,K]^{\mathcal{E}}} \ES^a[X_{\tau_1^{(K)}}\cdot\vec{\ell
}]<\infty,
\]
which implies
%
\begin{equation}
\label{zz} \ES^{\Pi}[X_{\tau_1}\cdot\vec{\ell}]<\infty.
\end{equation}

Because of Theorem \ref{thnu2} and (\ref{zz}), we may Birkhoff's
ergodic theorem (page~341 in~\cite{durrett1}) to see that
\[
\frac{(X_{\tau_n}-X_{\tau_1})\cdot\vec{\ell}}{n}\to_{n\to\infty
} \ES^{\Pi}[X_{\tau_1}
\cdot\vec{\ell}]<\infty.
\]

This implies\vspace*{1pt} that with probability going to 1, we have $X_{\tau
_n}\cdot
\vec{\ell}< cn$ for $c>\ES^{\Pi}[X_{\tau_1}\cdot\vec{\ell}]$. Hence,
for $c<1/\ES^{\Pi}[X_{\tau_1}\cdot\vec{\ell}]$, we have $X_{\tau
_{cn}}\cdot\vec{\ell}< n$ with probability going to 1, which means
that for a small $c>0$,
\[
\PR[\tau_{cn} \leq\Delta_n] \to1.
\]

Using (\ref{za}) and a reasoning similar to the proof of Lemma \ref{LBa},
\begin{eqnarray*}
\min_{a\in[1/K,K]^{\mathcal{E}}} \PR_0^a[
\tau_1\geq n\mid D=\infty]&\geq& c {\mathbf E}\bigl[\bigl(1- C/c_*
\bigl([2e_1,3e_1]\bigr)\bigr)^n\bigr]
\\
&\geq& cP_*\bigl[c_* \geq n^{1+2\epsilon_1}\bigr]\geq cn^{-(\gamma
(1+\epsilon_1))}
\end{eqnarray*}
for any $\epsilon_1>0$.

Notice that if $\tau_{cn}=\sum_{i=1}^{cn-1}( \tau_{i+1}-\tau_i)
\leq
n^{1/{\gamma} -\epsilon}$, then $
\tau_{i+1}-\tau_i \leq n^{1/{\gamma} -\epsilon}$ for any
$i\leq
cn-1$. This and the previous two equations imply that for any $\epsilon
>0$ we have
\begin{eqnarray*}
\PR\bigl[\Delta_n\leq n^{1/{\gamma} -\epsilon}\bigr]&\leq&\PR
\Biggl[\sum
_{i=1}^{cn-1} (\tau_{i+1}-
\tau_i) \leq n^{1/{\gamma} -\epsilon
} \Biggr] +o(1)
\\
&\leq& o(1)+ \prod_{i=2}^{cn-1} \max
_{a\in[1/K,K]^{\mathcal{E}}} \PR_0^a\bigl[
\tau_1 <cn^{1/{\gamma}-\epsilon}\mid D=\infty\bigr]
\\
&\leq& o(1) +\bigl(1-cn^{-(1/{\gamma}-\epsilon)\gamma(1+\epsilon
_1)}\bigr)^{cn-2}
\end{eqnarray*}
by Theorem \ref{thindep} and Lemma \ref{posescape}, for any
$\epsilon_1>0$.

Then taking $\epsilon_1>0$ small enough such that $(\frac1 {\gamma
}-\epsilon)\gamma(1+\epsilon_1)<1$, we see that
\[
\PR\bigl[\Delta_n\leq n^{1/{\gamma} -\epsilon}\bigr]\to0.
\]

This being true for all $\epsilon>0$, we have the lemma.
\end{pf}

\subsection{\texorpdfstring{Upper-bound on the polynomial order of $\Delta_n$}
{Upper-bound on the polynomial order of Delta n}}

Given a realization of an environment $\omega$ and a walk, $\widetilde
{\mathrm{GOOD}}{}^i_K$ [resp., $\widetilde{\mathrm{BAD}}{}^i(x)$] denotes the
set of good vertices (resp., the bad cluster of $x$) in the
configuration $\omega^i$ where all edges of $\mathcal{L}^{X_{\tau_i}}$
[defined at (\ref{defleft})] are considered to be normal, and all
other have the same state as in $\omega$.

Let us introduce,\vspace*{-2pt} for $i\geq0$, $\tilde{\tau}_i^{(K)}:=\{j\geq
T_{\widetilde{\mathrm{GOOD}}{}^i_K} \circ\tau_i$ with $j\leq
\tau
_{i+1}$ or $X_j\in\widetilde{\mathrm{BAD}}{}^i(X_{\tau_{i+1}})\}$.
Let us explain rapidly why we introduce the definition $\tilde{\tau
}_i$. These random variables will approximate $\tau_{i+1}-\tau_i$, but
they also have the following properties:
\begin{longlist}[(2)]
\item[(1)] we can estimate the time of an excursion in a trap, but not
the time to exit a trap. With the previous definition of $\tilde{\tau
}_i$, we will only need to consider excursions in traps.
\item[(2)] A simple inductance can be used to see that we can
upper-bound $\tau_n$ by $(T_{\mathrm{GOOD}}\circ\tau_1+\tau_1)
+\tilde
{\tau}_1+\cdot+\tilde{\tau}_n$. This is ultimately what allows us to
upper-bound~$\Delta_n$.
\end{longlist}

In exchange for these advantages, we lose any independence properties
between all $\tilde{\tau}_i$. This will not be a major issue, as we
will see Proposition \ref{LB2}. Even though this may seem surprising at
first sight, we recall that we are working with $\gamma<1$, which means
that we are working in the heavy tailed regime. Hence, the limiting
behavior is determined by what happens in only one trap and thus, at
least heuristically, correlations should not be important.

On the bright side, the random variables $\tilde{\tau}_i$ are
measurable with respect to $\sigma\{X_{n+\tau_i}-X_{\tau_i}\dvtx n\geq
0\}$
and $\sigma\{c_*(e),e\in\mathcal{R}^{X_{\tau_i}}\}$, which is well
suited for an application of Theorem \ref{thindep}. This theorem, will
allow us, loosely speaking, to say that $\tilde{\tau}_i$ (for $i\geq
1$) is distributed like $\tilde{\tau}_0$ under the law $\PR
^{a_{X_{\tau
_i}}}$. This is why we seek to prove the following.

\begin{lemma}\label{decaytau}
Assume that $-\lim\frac{\ln P_*[c_*>n]}{\ln n}=\gamma<1$. For any
$\epsilon>0$, there exists $K_0$ such that, for any $K\geq K_0$,
\[
\max_{a\in[1/K,K]^{\mathcal{E}}} \PR^a\bigl[\tilde{
\tau}_0^{(K)}>n\mid D=\infty\bigr]\leq C(K)
n^{-(\gamma-\epsilon)}.
\]
\end{lemma}

We will follow the ideas of Lemma \ref{lkjh} to prove it.

\begin{pf}
To simplify the notations, we will do the proof for $\PR$. In a similar
fashion, we could do it for any $\PR^a$ for $a\in[1/K,K]^{\mathcal
{E}}$. We may also notice that on the event $\{D=\infty\}$, we have that:
\begin{longlist}[(2)]
\item[(1)] $T_{\widetilde{\mathrm{GOOD}}{}^0_K} =T_{\mathrm{GOOD}_K}$,
\item[(2)] $\widetilde{\mathrm{BAD}}{}^i(X_{\tau_{1}}) \subset\mathrm
{BAD}(X_{\tau_{1}})$,
\end{longlist}
which means that $\tilde{\tau}_0$ is smaller than the random variable
$\overline{\tau}_0:=\{j\geq T_{\mathrm{GOOD}_K}$ with $j\leq
\tau
_{1}$ or $X_j\in\mathrm{BAD}(X_{\tau_{1}})\}$. Hence, to prove
our theorem it will be enough to prove that
\[
\max_{a\in[1/K,K]^{\mathcal{E}}} \PR^a\bigl[\overline{\tau
}_0^{(K)}>n\bigr]\leq C(K) n^{-(\gamma-\epsilon)}.
\]

In this proof, we will point out the $K$ dependence of constants,
since the proof requires us to be careful with this dependence.

Fix $\epsilon>0$.
We have
\[
\overline{\tau}_0 \leq\sum_{x\in\mathrm{GOOD}(\omega)} {
\mathbf1} {\{T_x<\tau_1\}} + \sum
_{x\in\partial\mathrm{BAD}(\omega) } \sum_{i=1}^{\infty
}{
\mathbf1} {\{T_x<\tau_1\}} {\mathbf1} {
\{X_i=x\}} T^+_{\mathrm
{GOOD}(\omega)}\circ\theta_i.
\]

Recalling the definition of $\chi$ at the beginning of the proof of
Lemma \ref{hittrap}, we see that
\begin{eqnarray*}
&&E^{\omega}\bigl[{\mathbf1} {\bigl\{\chi\leq n^{\epsilon}\bigr\}}
\overline{\tau}_0\bigr]
\\
&&\qquad\leq  \sum_{x\in B(n^{\epsilon}, n^{C\epsilon})} \Biggl[{\mathbf1}
{\bigl\{x\in
\mathrm{GOOD}(\omega)\bigr\}}\\
&&\qquad\quad\hspace*{49.9pt}{}+ {\mathbf1} {\bigl\{x\in\partial\mathrm
{BAD}(\omega)
\bigr\}} E^{\omega
} \Biggl[ \sum_{i=1}^{\infty}{
\mathbf1} {\{X_i=x\}} T^+_{\mathrm{GOOD}(\omega
)}\circ\theta_i
\Biggr] \Biggr].
\end{eqnarray*}

Using Markov's property and Lemma \ref{backbone} [just as in (\ref
{cantstandthis})], we obtain
\begin{eqnarray*}
&& E^{\omega}\bigl[{\mathbf1} {\bigl\{\chi\leq n^{\epsilon}\bigr\}}
\overline{\tau}_0\bigr]
\\
&&\qquad\leq C(K) \sum_{x\in B(n^{\epsilon}, n^{C\epsilon})} \bigl[{\mathbf1}
{\bigl\{ x\in
\mathrm{GOOD}(\omega)\bigr\}}\\
&&\qquad\quad\hspace*{77.4pt}{}+ {\mathbf1} {\bigl\{x\in\partial\mathrm
{BAD}(\omega)
\bigr\}} E^{\omega
}_x\bigl[T^+_{\mathrm{GOOD}(\omega)}\bigr]\bigr],
\end{eqnarray*}
and now, since $\gamma<1$, we have
\begin{eqnarray*}
&& E^{\omega}\bigl[{\mathbf1} {\bigl\{\chi\leq n^{\epsilon}\bigr\}}
\overline{\tau}{}^{\gamma
-\epsilon}_0\bigr] \\
&&\qquad\leq E^{\omega}\bigl[{
\mathbf1} {\bigl\{\chi\leq n^{\epsilon}\bigr\}} \overline{\tau
}_0\bigr]^{\gamma-\epsilon}
\\
&&\qquad\leq C(K) \biggl[\sum_{x\in B(n^{\epsilon}, n^{C\epsilon})} \bigl
[{\mathbf1} {\bigl
\{x\in\mathrm{GOOD}(\omega)\bigr\}}\\
&&\qquad\quad\hspace*{80pt}{}+ {\mathbf1} {\bigl\{x\in\partial
\mathrm{BAD}(
\omega)\bigr\}} E^{\omega
}_x\bigl[T^+_{\mathrm{GOOD}(\omega)}\bigr]
\bigr] \biggr]^{\gamma-\epsilon}.
\end{eqnarray*}\eject

We may now apply Lemma \ref{timetrap},
\begin{eqnarray*}
&& E^{\omega}\bigl[{\mathbf1} {\bigl\{\chi\leq n^{\epsilon}\bigr\}}
\overline{\tau}{}^{\gamma
-\epsilon}_0\bigr]
\\
&&\qquad\leq  C(K) \biggl[\sum_{x\in B(n^{\epsilon}, n^{C\epsilon})} {\mathbf1}
{\bigl\{x
\in\mathrm{GOOD}(\omega)\bigr\}}\\
&&\qquad\quad\hspace*{78pt}{}+ {\mathbf1} {\bigl\{x\in\partial\mathrm{BAD}(
\omega)\bigr\}} E^{\omega
}_x\bigl[T^+_{\mathrm{GOOD}(\omega)}\bigr]
\biggr]^{\gamma-\epsilon}
\\
&&\qquad\leq  C(K) \biggl[\sum_{x\in B(n^{\epsilon}, n^{C\epsilon})} {\mathbf1}
{\bigl\{x
\in\mathrm{GOOD}(\omega)\bigr\}}
\\
&&\hspace*{78pt}\qquad\quad{} +\sum_{x\in B(n^{\epsilon},n^{C\epsilon})} {\mathbf1} {\bigl\{x\in
\partial\mathrm
{BAD}(\omega)\bigr\}} \\
&&\qquad\quad\hspace*{90.5pt}\hspace*{46.8pt}{}\times\exp\bigl(3\lambda\bigl\llvert\partial
\mathrm{BAD}^{\mathrm{s}}_x(\omega)\bigr\rrvert\bigr) \\
&&\qquad\quad\hspace*{90.5pt}\hspace*{46.8pt}{}\times \biggl(1+\sum
_{e\in
E(\mathrm{BAD}^{\mathrm
{s}}_x(\omega))} c_*(e) \biggr) \biggr]^{\gamma-\epsilon}
\\
&&\qquad\leq C(K)n^{C\epsilon} \max_{x\in B(n^{\epsilon},n^{C\epsilon})}
{\mathbf1} {\bigl\{x
\in\partial\mathrm{BAD}(\omega)\bigr\}}\bigl\llvert E\bigl(\mathrm
{BAD}^{\mathrm{s}}_x(\omega)\bigr)\bigr\rrvert^{\gamma}\\
&&\qquad\quad\hspace*{90.5pt}{}\times
\exp\bigl(3\gamma\lambda\bigl\llvert\partial\mathrm{BAD}^{\mathrm{s}}_x(
\omega)\bigr\rrvert\bigr)
\\
&&\hspace*{90.5pt}\qquad\quad{} \times\Bigl(1+\max_{e\in E(\mathrm{BAD}^{\mathrm{s}}_x(\omega))}
c_*(e)^{\gamma-\epsilon} \Bigr)
\\
&&\qquad\leq C(K) n^{C\epsilon} \sum_{x\in B(n^{\epsilon},n^{C\epsilon})}
{\mathbf1} {
\bigl\{x \in\partial\mathrm{BAD}(\omega)\bigr\}} \exp\bigl(4\lambda\bigl
\llvert
\partial\mathrm{BAD}^{\mathrm{s}}_x(\omega)\bigr\rrvert\bigr)
\\
&&\qquad\quad{} \times\biggl(1+ \sum_{e\in E(\mathrm{BAD}^{\mathrm{s}}_x(\omega))}
c_*(e)^{\gamma-\epsilon}
\biggr),
\end{eqnarray*}
where we used that $\llvert E(\mathrm{BAD}^{\mathrm{s}}_x(\omega
))\rrvert\leq C
\llvert\partial\mathrm{BAD}^{\mathrm{s}}_x(\omega)\rrvert^d$.

Now, averaging over the environment, we get
\begin{eqnarray*}
&& \ES\bigl[{\mathbf1} {\bigl\{\chi\leq n^{\epsilon}\bigr\}} \overline
{\tau
}{}^{\gamma
-\epsilon}_0 \bigr]
\\
&&\qquad\leq  C(K)n^{C\epsilon} {\mathbf E} \biggl[{\mathbf1} {\bigl\{0\in\mathrm{GOOD}(
\omega)\bigr\}} \\
&&\qquad\quad\hspace*{54.1pt}{}\times\sum_{x\in B(n^{\epsilon}, n^{C\epsilon})} {\mathbf1}
{\bigl\{x\in
\partial\mathrm{BAD}(\omega)\bigr\}} \exp\bigl(4\lambda\bigl\llvert
\partial\mathrm
{BAD}^{\mathrm{s}}_x(\omega)\bigr\rrvert\bigr)
\\
&&\hspace*{146.2pt}\qquad\quad{} \times\biggl(1+\sum_{e\in E(\mathrm{BAD}^{\mathrm{s}}_x(\omega))}
c_*(e)^{\gamma-\epsilon}
\biggr) \biggr],
\end{eqnarray*}
so using a reasoning similar to (\ref{step3}) (based on Lemma \ref
{condtrap}) yields
\begin{eqnarray*}
&& \ES\bigl[{\mathbf1} {\bigl\{\chi\leq n^{\epsilon}\bigr\}} \overline
{\tau
}{}^{\gamma
-\epsilon}_0\bigr] /\bigl(C(K) n^{C\epsilon}\bigr)
\\
&&\qquad\leq  \sum_{x\in B(n^{\epsilon}, n^{C\epsilon})}{\mathbf E} \biggl[\exp
\bigl(4\lambda
\bigl\llvert\partial\mathrm{BAD}^{\mathrm{s}}_x(\omega)\bigr\rrvert
\bigr) \biggl(1+ \sum_{e\in E(\mathrm{BAD}^{\mathrm{s}}_x(\omega))}
c_*(e)^{\gamma-\epsilon
}
\biggr) \biggr]
\\
&&\qquad\leq  C(K)\sum_{x\in B(n^{\epsilon}, n^{C\epsilon})}{\mathbf E}\bigl[\bigl
\llvert
\partial\mathrm{BAD}^{\mathrm{s}}_x(\omega)\bigr\rrvert^d
\exp\bigl(4\lambda\bigl\llvert\partial\mathrm{BAD}^{\mathrm{s}}_x(
\omega)\bigr\rrvert\bigr)\bigr],
\end{eqnarray*}
where we used that $\llvert E(\mathrm{BAD}^{\mathrm{s}}_x(\omega
))\rrvert\leq C
\llvert\partial\mathrm{BAD}^{\mathrm{s}}_x(\omega)\rrvert^d$.

We may see that Lemma \ref{tgb} implies that for $K$ large enough,
\[
{\mathbf E}\bigl[\bigl\llvert\partial\mathrm{BAD}^{\mathrm{s}}_x(
\omega)\bigr\rrvert^d\exp\bigl(4\lambda\bigl\llvert\partial
\mathrm{BAD}^{\mathrm{s}}_x(\omega)\bigr\rrvert\bigr)\bigr]<C(K)<
\infty,
\]
which means that for any $\epsilon>0$,
\[
\ES\bigl[{\mathbf1} {\bigl\{\chi\leq n^{\epsilon}\bigr\}} \overline{\tau
}{}^{\gamma-\epsilon
} _0\bigr] \leq C(K) n^{C\epsilon}.
\]

From this, using Chebyshev's inequality, we get
\begin{eqnarray*}
\PR\bigl[\chi\leq n^{\epsilon}, \overline{\tau}_0> n\bigr] &=&
\PR\bigl[{\mathbf1} {\bigl\{\chi\leq n^{\epsilon}\bigr\}} \overline{
\tau}_0> n\bigr]
\\
&\leq& n^{-(\gamma-\epsilon
)} \ES\bigl[{\mathbf1} {\bigl\{\chi\leq n^{\epsilon}
\bigr\}} \overline{\tau}{}^{\gamma
-\epsilon}_0\bigr]
\\
& \leq & C(K) n^{-(\gamma-\epsilon)}n^{C_1\epsilon}.
\end{eqnarray*}

For any $\epsilon_1>0$, we may apply the previous equality for a small
$\epsilon$ (which depends only on $\gamma$ and $C_1$ but not on $K$)
to obtain
\[
\PR\bigl[\chi\leq n^{\epsilon}, \overline{\tau}_0> n\bigr]
\leq C(K)n^{-(\gamma
-\epsilon_1)}.
\]

Now, using (\ref{expostep15}), we can choose $K$ large enough such that
\[
\PR\bigl[\chi> n^{\epsilon}\bigr]\leq Cn^{-1},
\]
and the previous two equations imply that for any $\epsilon_1>0$ there
exists $K$ large enough, we obtain
\[
\PR\bigl[\overline{\tau}{}^{(K)}_0>n\bigr]\leq\PR\bigl[\chi>
n^{\epsilon}\bigr] + \PR\bigl[\chi\leq n^{\epsilon}, \overline{
\tau}{}^{(K)}_0> n\bigr] \leq C(K)n^{-(\gamma
-\epsilon_1)},
\]
which proves the lemma.
\end{pf}

The random variables $\tilde{\tau}_i$ verify
%
\begin{equation}
\label{jonstewart} \Delta_n\leq\tau_n
\leq(T_{\mathrm{GOOD}}\circ\tau_1+\tau_1)+\sum
_{i=1}^{n} \tilde{\tau}_i,
\end{equation}
since, by (\ref{rightdir}), we necessarily have $X_{\tau
_n^{(K)}}\cdot
e_1\geq n$.

This means that the previous lemma will allow us to give a sharp upper
bound on $\Delta_n$.
%
\begin{proposition}\label{LB2}
If $\lim\frac{\ln P_*[c_*>n]}{\ln n}=-\gamma$ with $\gamma<1$, then
\[
\limsup\frac{\ln\Delta_n}{\ln n}\leq\frac1{\gamma},\qquad \PR\mbox{-a.s.}
\]
\end{proposition}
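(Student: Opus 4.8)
The plan is to deduce the almost-sure upper bound on $\ln\Delta_n/\ln n$ from the polynomial tail bound on regeneration times provided by Lemma~\ref{decay_tau}, combined with the ergodic control of the regeneration structure (Theorem~\ref{th_nu2} and Proposition~\ref{prop}). The key observation is that $\Delta_n$ and the regeneration times $\tau_k$ are comparable up to the linear law-of-large-numbers scaling: since $X_{\tau_k}\cdot\vec\ell$ grows at least linearly in $k$ (by \eqref{right_dir}, $X_{\tau_{k+1}}\cdot\vec\ell - X_{\tau_k}\cdot\vec\ell\geq 2$) and, by \eqref{zz}, $(X_{\tau_k}-X_{\tau_1})\cdot\vec\ell/k \to \ES^{\Pi}[X_{\tau_1}\cdot\vec\ell]<\infty$, there is a deterministic constant $C$ such that, $\PR$-a.s.\ for all large $n$, level $n$ is reached between regeneration indices $n/C$ and $Cn$; equivalently $\Delta_n \leq \tau_{Cn}$ for $n$ large. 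So it suffices to show $\limsup \ln\tau_{Cn}/\ln n \leq 1/\gamma$, i.e.\ for any $\epsilon>0$, $\PR$-a.s.\ eventually $\tau_{Cn}\leq n^{1/\gamma+\epsilon}$.

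First I would reduce to a statement about the increments $\tau_{k+1}-\tau_k$. Writing $\tau_{Cn} = \tau_1 + \sum_{k=1}^{Cn-1}(\tau_{k+1}-\tau_k)$, it is enough to show that, $\PR$-a.s., $\max_{1\le k\le Cn-1}(\tau_{k+1}-\tau_k) \le n^{1/\gamma+\epsilon/2}$ for all large $n$ (and that $\tau_1<\infty$, which is Proposition~\ref{tau_finite}). By Theorem~\ref{th_nu2}, the chain $(Y_k)_{k\ge1}$ of Proposition~\ref{prop} has an ergodic initial distribution and the law of $(Y_{k+1})_{k\ge0}$ under $\PR$ is absolutely continuous with respect to the law with initial distribution $\tilde\nu_K$; in particular, each increment $J_k=\tau_{k+1}-\tau_k$ has the same distribution as $\tau_1$ under a measure of the form $\PR_0^a[\,\cdot\mid D=\infty]$ averaged over $a\sim\nu_K$. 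Lemma~\ref{decay_tau} gives, uniformly in $a\in[1/K,K]^{\mathcal E}$,
\[
\PR^a[\tau_1^{(K)}>m\mid D=\infty]\le C(K)m^{-(\gamma-\epsilon')},
\]
for any $\epsilon'>0$, hence the same bound holds for each $J_k$ (after integrating in $\nu_K$ and using the absolute continuity to transfer to $\PR$, possibly with a changed constant). Taking $m=n^{1/\gamma+\epsilon/2}$ and a union bound over $k\le Cn-1$,
\[
\PR\Bigl[\max_{1\le k\le Cn-1}J_k > n^{1/\gamma+\epsilon/2}\Bigr]\le C(K)\, n\cdot n^{-(1/\gamma+\epsilon/2)(\gamma-\epsilon')}.
\]
Choosing $\epsilon'$ small enough (depending on $\gamma$ and $\epsilon$) makes the exponent $1-(1/\gamma+\epsilon/2)(\gamma-\epsilon')$ strictly negative, say $\le -\delta<0$. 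Then the probabilities along the subsequence $n=2^j$ are summable, so by Borel--Cantelli, $\PR$-a.s.\ for all large $j$ we have $\max_{k\le C2^{j}-1}J_k\le 2^{j(1/\gamma+\epsilon/2)}$; a standard monotonicity/interpolation between consecutive powers of $2$ upgrades this to all large $n$ with exponent $1/\gamma+\epsilon$.

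Combining the two steps: $\PR$-a.s., for all large $n$,
\[
\Delta_n\le \tau_{Cn}=\tau_1+\sum_{k=1}^{Cn-1}J_k \le \tau_1 + (Cn)\,n^{1/\gamma+\epsilon/2}\le n^{1/\gamma+\epsilon},
\]
so $\limsup_n \ln\Delta_n/\ln n \le 1/\gamma+\epsilon$, and letting $\epsilon\downarrow0$ gives the proposition. I expect the main obstacle to be the bookkeeping around the transfer from the stationary chain (initial law $\tilde\nu_K$, for which Lemma~\ref{decay_tau} applies cleanly increment-by-increment) to the actual law under $\PR$: one must invoke the absolute continuity in Theorem~\ref{th_nu2} and make sure the Radon--Nikodym factor does not destroy the union bound — this is fine because absolute continuity means a $\PR$-null set of bad trajectories is still null, so the Borel--Cantelli conclusion is inherited. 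A secondary point requiring a little care is justifying $\Delta_n\le \tau_{Cn}$ eventually, which follows from \eqref{zz} together with \eqref{right_dir} exactly as in the first paragraph; this is precisely the argument already used at the start of the proof of Lemma~\ref{LB1}.
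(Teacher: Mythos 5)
There is a genuine gap at the very last step, and it is fatal to the claimed exponent. Your strategy — reduce to $\Delta_n\le\tau_{Cn}$ (in fact $\Delta_n\le\tau_n$ already follows from~(\ref{right_dir})), then control the regeneration increments $J_k=\tau_{k+1}-\tau_k$ via the tail bound of Lemma~\ref{decay_tau} — is the same starting point as the paper's. But your bound on the sum, $\sum_{k=1}^{Cn-1}J_k\le (Cn)\max_k J_k\le (Cn)\,n^{1/\gamma+\epsilon/2}$, is of order $n^{1+1/\gamma+\epsilon/2}$, and the inequality $(Cn)\,n^{1/\gamma+\epsilon/2}\le n^{1/\gamma+\epsilon}$ is false for small $\epsilon$ (it would require $\epsilon\ge 2$). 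As written, your argument only yields $\limsup \ln\Delta_n/\ln n\le 1+1/\gamma$. This is not a bookkeeping slip that can be absorbed into constants: because $\gamma<1$ the increments have infinite mean, the typical maximal increment among $n$ of them is itself of order $n^{1/\gamma}$, and the content of the proposition is precisely that the \emph{sum} is still only $n^{1/\gamma+o(1)}$ — i.e.\ that the sum is dominated by a few large increments. Bounding the sum by $n$ times the maximum throws away exactly this, costing a factor $n$.

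To close the gap you must control how many increments fall in each size range, not just the largest one. This is what the paper does: for a fine grid of exponents $i/(M\gamma)$ it bounds $\ES[\mathrm{card}\{j\le n:\ \tau_j-\tau_{j-1}\ge n^{i/(M\gamma)}\}]$ using Lemma~\ref{decay_tau} together with Theorem~\ref{th_indep} (which also handles the transfer to $\PR$ directly, without appealing to absolute continuity), then applies Markov's inequality to the counts (the events $B(n,i,M)$), and on the complement sums size times count over the scales to get $\tau_n\le Cn^{1/\gamma+L/M}$. A two-scale variant would also repair your proof: show by a union bound that with high probability no $J_k$ exceeds $T=n^{1/\gamma+\epsilon}$, and bound $\sum_k J_k\1{J_k\le T}$ by Markov's inequality using $\ES[J_k\1{J_k\le T}]\le CT^{1-\gamma+\epsilon'}$, which gives $n\cdot T^{1-\gamma+\epsilon'}=n^{1/\gamma+O(\epsilon)}$; but some argument of this truncation/counting type is indispensable, and your union bound on the maximum alone cannot deliver the exponent $1/\gamma$. (Your handling of the Radon–Nikodym issue and the dyadic Borel–Cantelli upgrade is acceptable for an almost-sure statement, so the gap is solely in the summation step.)
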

\begin{pf}Fix $M\geq1$. We know, by Lemma \ref{decaytau} and
Theorem \ref{thindep}, that there exists $K$ large enough such that
for $i\leq M+1$,
\begin{eqnarray*}
&&
\ES\bigl[\mathrm{card}\bigl\{1\leq j\leq n, \tilde{\tau}_j^{(K)}
\geq n^{i/(M\gamma
)}\bigr\}\bigr] \\
&&\qquad = \ES\biggl[\sum
_{{1\leq j\leq n}} 1\bigl\{ \tilde{\tau}_j^{(K)}
\geq n^{i/(M\gamma)}\bigr\} \biggr]
\\
&&\qquad= \sum_{{1\leq j\leq n}} \ES\bigl[ \PR^{a_{X_{\tau_{j}}}}\bigl[
\tilde{\tau}_1^{(K)}\geq n^{i/(M\gamma)}\mid D=\infty
\bigr] \bigr]
\\
&&\qquad\leq n\max_{a\in[1/K,K]^{\mathcal{E}}} \PR^a\bigl[\tilde{
\tau}_0\geq n^{i/(M\gamma)}\mid D=\infty\bigr]
\\
&&\qquad\leq Cn n^{-(i/M)(1-1/M)}=Cn^{(1-i/M)+2/M},
\end{eqnarray*}
where we used Lemma \ref{posescape}. Hence by Markov's inequality for
any $L$,
\begin{eqnarray*}
&&\PR\biggl[\mathrm{card}\bigl\{1\leq j \leq n, \tilde{\tau}_j^{(K)}
\geq n^{i/(M\gamma)}\bigr\}\geq\frac1 {2M} n^{1/\gamma
+L/M-(i+1)/(M\gamma
)} \biggr]
\\
&&\qquad\leq  C(M) n^{(1-1/\gamma)(1-i/M)+(1/\gamma+2 -L)/M}.
\end{eqnarray*}

Fix $L \geq1/\gamma+3$ (which does not depend on $M$). We denote the event
\begin{eqnarray*}
&&B(n,i, M)
\\
&&\qquad= \biggl\{\mathrm{card} \bigl\{1\leq j \leq n, \tilde{\tau}_j^{(K)}
\in\bigl(n^{i/(M\gamma)},n^{(i+1)/(M\gamma)}\bigr]\bigr\}\\
&&\qquad\hspace*{99.4pt}\geq\frac1 {2M}
n^{1/\gamma
+L/M-(i+1)/(M\gamma)}
\biggr\},
\end{eqnarray*}
and we get that for any fixed $M$ and $i\leq M$
%
\begin{equation}
\label{exponentb} \PR\bigl[B(n,i,M)\bigr]\leq C
n^{-1/M}=o(1),
\end{equation}
since $\gamma< 1$.

In the same way, by Lemma \ref{decaytau} and Theorem \ref{thindep},
we get that
\[
B(n,M+1,M)=\bigl\{\mathrm{card}\bigl\{1\leq j \leq n, \tilde{\tau}_j^{(K)}
\geq n^{(M+1)/(M\gamma)}\bigr\}\geq1\bigr\},
\]
verifies
\[
\PR\bigl[B(n,M+1,M)\bigr]\leq n^{-\epsilon}=o(1).
\]

This means that, denoting $B(n,M)=\bigcup_{j=0}^{M+1} B(n,i,M)$, we have
\[
\PR\bigl[B(n,M)\bigr]=o(1).
\]

Now, on $B(n,M)^c$, we can give an upper bound for $\Delta_n$ by
using (\ref{jonstewart})
\begin{eqnarray*}
\Delta_n &\leq& (T_{\mathrm{GOOD}}\circ\tau_1+
\tau_1)+\sum_{i=0}^{M}
n^{(i+1)/(M\gamma)} \biggl(\frac1 {2M} n^{1/\gamma
+L/M-(i+1)/(M\gamma
)} \biggr)
\\
&=& (T_{\mathrm{GOOD}}\circ\tau_1+\tau_1)+
\frac{M+1}{2M}n^{1/\gamma+L/M}.
\end{eqnarray*}

It follows that $\Delta_n \leq(T_{\mathrm{GOOD}}\circ\tau_1+\tau_1)+
n^{1/\gamma+L/M}$ on $B(n,M)^c$, and hence, since $(T_{\mathrm
{GOOD}}\circ\tau_1+\tau_1)<\infty$, we have
\[
\mbox{on $B(n,M)^c$}\qquad \frac{\ln\Delta_n}{\ln n} \leq1/\gamma+L/M.
\]

Hence we have proved that for any $M\geq1$, by (\ref{exponentb})
\[
\limsup\frac{\ln\Delta_n}{\ln n} \leq1/\gamma+L/M,\qquad \PR\mbox{-a.s.},
\]
and letting $M$ go to infinity, we get the result (we recall that $L$
does not depend on~$M$).
\end{pf}

\subsection{The polynomial order of the distance of the random walk
from the origin}

By Proposition \ref{LB2} and Lemma \ref{LB1} we see that if $-\lim
\frac
{\ln P_*[c_*>n]}{\ln n}=\gamma<1$, we have
\[
\lim\frac{\ln\Delta_n}{\ln n} = 1/\gamma, \qquad\PR\mbox{-a.s.},
\]
which implies, using a classical inversion argument similar to the one
used in the proof of Theorem 1.3 in Section 5 of \cite{FH}, we see that
\[
\lim\frac{\ln X_n \cdot\vec{\ell}}{\ln n} =\gamma, \qquad\PR\mbox{-a.s.}
\]



\section*{Acknowledgments}

I am very grateful to the referee for various remarks that greatly
improved the quality of the paper.



\printaddresses

\end{document}